\documentclass[a4paper,oneside,reqno]{amsart}

\calclayout

\usepackage{etoolbox}

\newtoggle{default}\toggletrue{default}
\newtoggle{birk}\togglefalse{birk}
\newtoggle{birk_t2}\togglefalse{birk_t2}
\newtoggle{siam}\togglefalse{siam}
\newtoggle{springer}\togglefalse{springer}

\newtoggle{endwithsymbol}\togglefalse{endwithsymbol}

\toggletrue{default}
\toggletrue{endwithsymbol}

\usepackage[utf8]{inputenc}
\usepackage[english]{babel}
\usepackage{microtype} 

\usepackage{amsmath,amssymb,amsthm}
\usepackage{thmtools} 

\usepackage{hyperref} 
\hypersetup{hidelinks}
\usepackage[nameinlink,sort]{cleveref} 
\usepackage{orcidlink}

\usepackage{mathtools} 
\usepackage{upgreek}
\usepackage{accents} 
\usepackage{stmaryrd} 

\usepackage{tikz} 
\usetikzlibrary{cd}
\usetikzlibrary{calc}

\usepackage{dsfont} 
\usepackage{pifont} 

\usepackage{ifthen}
\usepackage{enumitem}
\setlist[itemize]{labelindent=\parindent,leftmargin=*,itemsep=3pt,topsep=5pt}
\setlist[enumerate]{label=\textup{(\roman{*})},itemsep=3pt,labelindent=0pt,leftmargin=*}

\usepackage{nicematrix}

\usepackage{graphicx} 
\usepackage{subcaption}

\usepackage{booktabs} 
\usepackage{esint} 

\usepackage{bbm}

\usepackage{color}

\usepackage{pgfpages} 
\usepackage{pgfplots} 
\pgfplotsset{compat=1.13} 

\definecolor{NathanaelColor}{rgb}{0.2,0.0,0.6}

\newcommand{\R}{\mathbb{R}} 
\newcommand{\C}{\mathbb{C}} 

\newcommand{\1}{\mathds{1}}

\newcommand{\Cc}[1][\infty]{\mathrm{C}_{\mathrm{c}}\ifthenelse{\equal{#1}{}}{}{^{#1}}}
\newcommand{\Lp}[2][]{\mathrm{L}_{#2\ifthenelse{\equal{#1}{}}{}{,#1}}} 

\newcommand{\iu}{\mathrm{i}} 
\newcommand{\e}{\mathrm{e}}
\newcommand{\dd}{\mathrm{d}} 

\newcommand{\co}{\mathrm{c}}

\DeclareMathOperator{\ran}{ran}

\DeclareMathOperator{\spt}{spt}

\DeclareMathOperator{\dom}{dom}
\DeclareMathOperator{\diag}{diag}
\DeclareMathOperator{\dist}{dist}

\DeclareMathOperator{\grad}{grad}
\DeclareMathOperator{\dive}{div}
\DeclareMathOperator{\curl}{curl}

\DeclarePairedDelimiterX{\dset}[2]{\{}{\}}{#1\,\delimsize\vert\,\mathopen{} #2}
\DeclarePairedDelimiterX{\scprod}[2]{\langle}{\rangle}{#1,#2}

\renewcommand{\Re}{\operatorname{Re}}
\renewcommand{\Im}{\operatorname{Im}}

\theoremstyle{plain}
\newtheorem{theorem}{Theorem}[section]
\newtheorem{lemma}[theorem]{Lemma}
\newtheorem{proposition}[theorem]{Proposition}

\newtheorem{hypothesis}[theorem]{Hypothesis}

\iftoggle{endwithsymbol}{%
    \declaretheorem[style=definition,sibling=theorem,qed=\ding{169}]{definition}
    \declaretheorem[style=definition,sibling=theorem,qed=\ding{169}]{example}
    \declaretheorem[style=definition,sibling=theorem,qed=\ding{169}]{problem}
    \declaretheorem[style=definition,sibling=theorem,qed=\ding{169}]{assumption}
    \declaretheorem[style=definition,numbered=no,qed=\ding{169}]{claim}

    \declaretheorem[style=remark,sibling=theorem,qed=\ding{169}]{remark}
  }%
  {%
    \theoremstyle{definition}
    
    \newtheorem{example}[theorem]{Example}

    \theoremstyle{remark}
    \newtheorem{remark}[theorem]{Remark}
  }%

\begin{document}

\title[Stability for hyperbolic equations]{Block operator matrix techniques for stability properties of hyperbolic equations} 

%

%
%

\author[M.~Waurick]{Marcus Waurick\,\orcidlink{0000-0003-4498-3574}}
\email{marcus.waurick@math.tu-freiberg.de}

\address{TU Bergakademie Freiberg \\
  Institute of Applied Analysis \\
  Akademiestrasse 6 \\
  D-09596 Freiberg \\
  Germany}

\date{\today}
\dedicatory{}

\keywords{strong stability, semi-uniform stability, unique continuation principle, closed range conditions, Maxwell's equations, block operator matrices}


\ifboolexpr{togl{birk_t2} or togl{birk}}{%
\subjclass{}%
}{%
\subjclass[2020]{35B40, 35Q61, 47A53, 47A05, 47A62}%
}%


\begin{abstract} Inspired by recent developments in the theory of stability results in the context of certain wave type phenomena, we discuss abstract damped hyperbolic type equations given in a block operator matrix form with regards to asymptotic behaviour of their solutions. Under mild conditions on the operators involved we provide criteria establishing strong or semi-uniform stability. In the particular case of Maxwell's equations, these criteria are implied under mild regularity conditions of the underlying domain causing spatial derivative operators satisfy certain compact embedding conditions and rather minimal assumptions on the damping conductivity. These assumptions improve on both regularity as well as on the structural requirements for the conductivity previously available in the literature.  \end{abstract}

\ifboolexpr{togl{default} or togl{birk_t2} or togl{birk}}{\maketitle}{}%

 \section*{Acknowledgments}
  The author thanks Sebastian Franz, Rainer Picard and Sascha Trostorff for useful discussions particularly concerning the decisive result in  \Cref{sec:crs}.
\section{Introduction}\label{sec:intro}

Quite recently, in \cite{EKL24,NS25}, the stability, that is, the time-asymptotic decay of solutions, of the following problem class has been considered. Let $H_0,H_1$ be Hilbert spaces, $\alpha=\alpha^*\in L(H_0)$, $\beta=\beta^* \in L(H_1)$, $\gamma\in L(H_0)$ be bounded linear operators. Furthermore, let $C\colon \dom(C)\subseteq H_0\to H_1$ be closed and densely defined and let $C^*$ be the corresponding Hilbert space adjoint. Then we consider the following equation:
\begin{equation}\label{eq:absMax}
  \big(\partial_t \begin{pmatrix} \alpha & 0 \\ 0 & \beta \end{pmatrix}U\big)'(t)= -\big( \begin{pmatrix} \gamma & 0 \\ 0 & 0 \end{pmatrix}+ \begin{pmatrix} 0 & -C^* \\ C & 0 \end{pmatrix}\big)U(t)
\end{equation}
subject to suitable initial conditions at $0$ in (a subset of) $H_0\times H_1$. If $C$ has closed range and $\Re \gamma\geq c>0$, that is, \emph{full damping}, and initial conditions are chosen from $H_0\times\beta^{-1}\ran(C)$, then the above solutions decay exponentially. This is implicitly contained in \cite{Tro15,DIW24} (see also \cite[Chapter 11]{STW22} for the context of evolutionary equations) and in the present form has been considered in \cite{EKL24}. In both approaches a suitable change of variables yields the corresponding result. The former uses a frequency domain formulation and the latter works more directly with the underlying energy. In the companion paper \cite{Wa26} of the present one, we have analysed the frequency domain approach more closely and provided our perspective to exponential stability. This reasoning and the simplifications and rationales developed along the way, however, uncover structural insights that are of substantial help regarding the situation, when $\Re \gamma\geq c>0$ does not hold any more. Thus, the aim of the present article is to use the insights gathered in \cite{Wa26} for studying the case of \emph{partial damping}. Only particular geometric set-ups then lead to exponential stability, see the classical \cite{Leb96} (and the references therein), involving the geometric control condition; this particular avenue is not followed up here as these results adhere to explicit solution representations that cannot be expected on the level of abstraction favoured in this article. This being said, it has been found that strong or semi-uniform stability results can be found for suitable initial data with a rather tame usage of knowledge of particular solutions. Indeed, for Maxwell's equations, such results have been appeared in \cite{NS25,Ell19}.  The aim of this paper is to simultaneously generalise these two sources in that weaker sufficient conditions for (strong) stability are provided and a precise functional analytic condition is coined leading to semi-uniform stability. 

In order to present the differences to these approaches right away, we shall specialise the above setting to the example situation of Maxwell's equations. For this let $\Omega\subseteq \R^3$ be open, bounded, connected and Lipschitz, $\varepsilon, \mu $ are self-adjoint matrix-valued coefficients Lipschitz continuous and strictly positive definite uniformly on $\Omega$. In the above setting, $C^*=\curl$, the distributional $\curl$-operator in $L_2(\Omega)^3=H_0=H_1$ with maximal domain and $C$ is the same obtained by the closure of $\curl$ restricted to smooth compactly supported vector fields. Next, $\alpha=\varepsilon$ and $\beta=\mu$, where $\varepsilon$ is the dielectricity and $\mu$ the magnetic permeability. The (partial) damping is then introduced via the electric conductivity $\sigma\in L_\infty(\Omega)^{3\times 3}$ assumed to satisfy $\Re \sigma\geq 0$ rendering the corresponding semi-group generator to be m-dissipative.

We find the following conditions on \textbf{strong stability}, that is, mild solutions of \cref{eq:absMax} tend to $0$ in norm as $t\to\infty$ for (suitable) initial data:
\begin{enumerate}
\item[\cite{Ell19}] if $\sigma\in W_\infty^1(\Omega)^{3\times 3}$ is non-negative; $\sigma$ is positive definite on an open non-empty subset $\omega\subseteq \Omega$ and $\sigma=0$ on $\Omega\setminus\omega$. Initial data needs to be divergence-free.
\item[\cite{NS25}${}^{\text{st}}$]  if $\sigma\in L_\infty(\Omega;\R)$, $\sigma\geq 0$ and with $\Omega_0 \coloneqq \bigcup\{U; U\subseteq \Omega\text{ open }, \sigma=0\text{ on }U\}\neq\emptyset$; $\Omega_+\coloneqq \Omega\setminus \overline{\Omega_0}\neq \emptyset$. Moreover, $\partial\Omega,  \partial\Omega_0, \partial\Omega_+$ are Lipschitz and $\partial_0\Omega\cap \partial\Omega$ is a Lipschitz submanifold of $\partial\Omega$. Initial data need to be perpendicular to the stationary states of \cref{eq:absMax}.
\end{enumerate}

Furthermore, in the latter reference we can also find a set-up for \textbf{semi-uniform stability}, that is, solutions decay with a prescribed rate $f\colon [0,\infty)\to [0,\infty)$ (with $f(t) \to 0\text{ as }t\to\infty$) unifomly for initial data in the generator domain, if
\begin{enumerate}
\item[\cite{NS25}${}^{\text{su}}$] in addition to \cite{NS25}${}^{\text{st}}$, we have $\partial\Omega$ to be connected. There are finitely many connected open and non-empty sets $\Omega_{+,k}\subseteq \Omega$, $k\in \{1,\ldots,n\}$, with pairwise disjoint closures. Next, $\overline{\Omega_{+,k}}$ either intersects with $\partial\Omega$ nowhere or on a set of positive relative measure. Finally $\sigma\geq \sigma_->0$ on $\Omega_+=\bigcup_{k\in \{1,\ldots,n\}}\Omega_{+,k}$.
\end{enumerate}

In the present article we improve the above results in the following situations.

\subsection*{Strong stability} We will show that for all non-stationary initial data, strong stability holds for $\sigma\in L_\infty(\Omega)^{3\times 3}$ satisfying  for some $\omega\subseteq \Omega$ open and $c>0$, $\Re \sigma \geq c>0$ on $\omega$ and $\sigma=\sigma^*\geq 0$ on $\Omega\setminus\omega$. Thus, we confirm Eller's conjecture in \cite{Ell19} on the non-optimality of the conditions on $\omega$ and improve on the regularity assumption. Moreover, we provide a more general geometric set-up than the one considered in \cite{NS25}${}^{\text{st}}$ and allow for matrix-valued and certain non-selfadjoint settings. Thus, we have found a common proper generalisation of the available criteria in the strongly stable situation.

\subsection*{Semi-uniform stability} For semi-uniform stability, we require $D\subseteq \Omega$ open with continuous boundary and such that $L_2(\Omega)=L_2(D)\oplus L_2(\Omega\setminus\overline{D})$ and $\sigma = \tilde{\sigma}\1_{D}$, where $\tilde{\sigma}\in L_\infty(D)^{3\times 3}$ with $\Re \tilde{\sigma}\geq c>0$. Moreover, we require the geometric compatibility condition
\begin{equation}\label{eq:closedD}
   \1_{D} [\grad[H_0^1(\Omega)]]\subseteq L_2(D)^3\text{ closed.}
\end{equation}
If $\overline{D}\subseteq \Omega$ and $D$ is an $H^1$-extension domain, then $   \1_{D} [\grad[H_0^1(\Omega)]]= \grad[H^1(D)]$ and the right-hand side is closed. This particularly implies that in order to guarantee semi-uniform stability the precise geometric set-up as provided in \cite{NS25}${}^{\text{su}}$ is not necessary. We conjecture that the conditions in \cite{NS25}${}^{\text{su}}$ do imply \cref{eq:closedD}, but we have failed to confirm this yet.

We note that albeit the case of full damping implying exponential stability can be established using abstract means only, see \cite{EKL24,Wa26}, the situation for strong or semi-uniform stability requires closer attention to the particular set-up. In fact, the stability part itself only holds due to injectivity results, which themselves are consequences of a unique continuation principle valid for Maxwell's equations, see below and \cite{NW12} for the original reference. However, the abstract perspective also allows for more insight in that it is possible to provide counterexamples in general Hilbert spaces, thus confirming that there cannot be an abstract proof showing semi-uniform stability. We note in passing that the general Hilbert space technique can be used to reduce the analytical effort significantly and help to identify the places where particular computations are required.

The paper is organised in the following way. In \Cref{sec:wp} we introduce our basic assumptions and show well-posedness of the above problem whilst at the same time showing that we may assume without loss of generality that  $\alpha=1$ and $\beta=1$. In \Cref{sec:3b3} we provide the main structural insight from \cite{Wa26} in that we may think of the above equation as an equation with both $C$ and $C^*$ being one-to-one and onto with additional infinite-dimensional condition involving $\gamma$. \Cref{sec:stsu} summarises the main stability theorems from the literature we embark to employ. Moreover, we specify the abstract conditions needed to obtain strong and semi-uniform stability for \cref{eq:absMax} and prove the corresponding theorems. For this some auxiliary statements concerning closed range results are needed, which are provided in \Cref{sec:clr}. In this section, we also provide counterexamples necessitating more concrete set-ups warranting the closedness of certain operators. For this reason, we introduce the main application, Maxwell's equations with partial damping in \Cref{sec:apptoMax}. We may prove strong stability right away and then address semi-uniform stability. This in turn requires revisiting an inequality from \cite{PPTW21} (also provided in a different form in \cite{NS25}). In a $d$-dimensional setting not using the underlying complex structure, we prove this inequality in \Cref{sec:crs}. We conclude the paper with a summary and an outlook.

\section{Changing the variables and the generation theorem for  \cref{eq:absMax}}\label{sec:wp}

The standing abstract hypothesis (which is going to be refined later) reads as follows.

\begin{hypothesis}\label{hyp:fne0} Let $H_0$, $H_1$ be Hilbert spaces, $\alpha,\gamma\in L(H_0)$, $\beta\in L(H_1)$, $C\colon \dom(C)\subseteq H_0\to H_1$ be densely defined and closed. 

Assume $\alpha=\alpha^*$ and $\beta=\beta^*$ and that there is $c>0$ such that  \[
\beta\geq c, \quad \alpha\geq c.
\]Moroever, $\Re \gamma\coloneqq (1/2)(\gamma+\gamma^*)\geq 0$.
\end{hypothesis}

We quickly summarise the setting provided in \cite{Wa26} for showing well-posedness of \cref{eq:absMax}. Note that as $\alpha=\alpha^*\geq c$ and $\beta=\beta^*\geq c$, the corresponding square roots are topological isomorphisms and $\tilde{U}\coloneqq \diag(\sqrt{\alpha},\sqrt{\beta})U$ satisfies the equation
\begin{equation}\label{eq:absMax2}
\tilde{U}'(t)= -\big( \begin{pmatrix} \sqrt{\alpha}^{-1}\gamma\sqrt{\alpha}^{-1} & 0 \\ 0 & 0 \end{pmatrix}+ \begin{pmatrix} 0 & -\sqrt{\alpha}^{-1}C^*\sqrt{\beta}^{-1} \\ \sqrt{\beta}^{-1}C\sqrt{\alpha}^{-1} & 0 \end{pmatrix}\big)\tilde{U}(t).
\end{equation}
A similar change of variables uniquely provides a solution $U$ if $\tilde{U}$ satisfying \cref{eq:absMax2} is provided.

Since $(\sqrt{\beta}^{-1}C\sqrt{\alpha}^{-1} )^*=\sqrt{\alpha}^{-1}C^*\sqrt{\beta}^{-1}$ and $\Re \sqrt{\alpha}^{-1}\gamma\sqrt{\alpha}^{-1}\geq 0$ as $\Re \gamma\geq0$  it is not difficult to see that the operator
\[
-\big( \begin{pmatrix} \sqrt{\alpha}^{-1}\gamma\sqrt{\alpha}^{-1} & 0 \\ 0 & 0 \end{pmatrix}+ \begin{pmatrix} 0 & -\sqrt{\alpha}^{-1}C^*\sqrt{\beta}^{-1} \\ \sqrt{\beta}^{-1}C\sqrt{\alpha}^{-1} & 0 \end{pmatrix}\big)
\]
is m-dissipative in the Hilbert space $H_0\times H_1$ (it is a sum of the dissipative and bounded $ \begin{pmatrix} \sqrt{\alpha}^{-1}\gamma\sqrt{\alpha}^{-1} & 0 \\ 0 & 0 \end{pmatrix}$ and the skew-selfadjoint $\begin{pmatrix} 0 & -\sqrt{\alpha}^{-1}C^*\sqrt{\beta}^{-1} \\ \sqrt{\beta}^{-1}C\sqrt{\alpha}^{-1} & 0 \end{pmatrix}$). Thus, by the Lumer--Phillips theoerm, it generates a $C_0$-semigroup of contractions and hence given initial data from $H_0\times H_1$, \cref{eq:absMax2} admits a unique mild solution provided by the semi-group generated by $A$ applied to the initial data. In particular, we get unique existence of continuous solutions of \cref{eq:absMax} satisfying the equation in an integrated sense.

The state space $H_0\times H_1$ can also be reduced to the following
\[
   H =H_0 \oplus \ran(\sqrt{\beta}^{-1}C\sqrt{\alpha}^{-1})=H_0\oplus \sqrt{\beta}^{-1}[\ran(C)],
\]yielding still the generation property for $A$:
\begin{theorem}\label{thm:sgg} Assume \Cref{hyp:fne0}. Then the operator 
\[
A\coloneqq -\begin{pmatrix} \sqrt{\alpha}^{-1}\gamma\sqrt{\alpha}^{-1} & 0 \\ 0 & 0 \end{pmatrix}- \begin{pmatrix} 0 & -\sqrt{\alpha}^{-1}C^*\sqrt{\beta}^{-1} \\ \sqrt{\beta}^{-1}C\sqrt{\alpha}^{-1} & 0 \end{pmatrix}
\]
is m-dissipative and, thus, generates a contraction semi-group on $H$ and on $\overline{\ran}(A)$.
\end{theorem}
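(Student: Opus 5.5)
Throughout, write $\tilde\gamma\coloneqq\sqrt{\alpha}^{-1}\gamma\sqrt{\alpha}^{-1}$ and $\tilde C\coloneqq\sqrt{\beta}^{-1}C\sqrt{\alpha}^{-1}$. Then $\tilde C$ is densely defined and closed with $\tilde C^*=\sqrt{\alpha}^{-1}C^*\sqrt{\beta}^{-1}$, and
\[
A=-\begin{pmatrix}\tilde\gamma & 0\\ 0&0\end{pmatrix}-\begin{pmatrix}0&-\tilde C^*\\ \tilde C&0\end{pmatrix}.
\]

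\textbf{Step 1: $A$ on $H_0\times H_1$.} The block $\begin{pmatrix}0&-\tilde C^*\\ \tilde C&0\end{pmatrix}$ is skew-selfadjoint, since its adjoint is $\begin{pmatrix}0&\tilde C^*\\ -\tilde C&0\end{pmatrix}$. Adding the bounded operator $-\diag(\tilde\gamma,0)$ keeps the domain and the closedness. For $U=(u,v)\in\dom(A)$ the skew part has purely imaginary inner product, so
\[
\Re\scprod{AU}{U}=-\Re\scprod{\tilde\gamma u}{u}\leq 0,
\]
using $\Re\tilde\gamma=\sqrt\alpha^{-1}(\Re\gamma)\sqrt\alpha^{-1}\geq0$. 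The adjoint $A^*$ is $-\diag(\tilde\gamma^*,0)$ plus the negative of the skew part, hence also dissipative. A closed densely defined operator $A$ with $A$ and $A^*$ dissipative is m-dissipative. Equivalently, $1-A=(1+\diag(\tilde\gamma,0))+S$ with $S$ skew-selfadjoint and a bounded accretive perturbation, so $1-A$ is onto. Lumer--Phillips then gives a contraction semigroup $T$ on $H_0\times H_1$.

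\textbf{Step 2: invariance of subspaces.} It suffices that $H=H_0\oplus\overline{\ran}(\tilde C)$ and $\overline{\ran}(A)$ are closed and invariant under the resolvent $(\lambda-A)^{-1}$ for $\lambda>0$. Then the part of $A$ in each subspace is m-dissipative and generates the restricted contraction semigroup.

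I read $\ran(\tilde C)$ in the definition of $H$ as its closure, as needed for a Hilbert space. Since $\sqrt\beta^{-1}$ is an isomorphism, this equals $\sqrt\beta^{-1}[\overline{\ran}(C)]$.

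For $H$: the second component of $AU$ is $-\tilde Cu\in\ran(\tilde C)$. If $(\lambda-A)U=F$ with $F\in H$, then $\lambda v=f_2-\tilde Cu\in\overline{\ran}(\tilde C)$, so $U\in H$.

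For $\overline{\ran}(A)$: the resolvent commutes with $A$, so $(\lambda-A)^{-1}AU=A(\lambda-A)^{-1}U\in\ran(A)$. By continuity of the resolvent, $\overline{\ran}(A)$ is invariant.

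\textbf{Step 3: density.} The restricted operators must also be densely defined in the subspaces; I expect this to be the main (mild) obstacle. In a reflexive space, a closed resolvent-invariant subspace $Y$ has $\dom(A)\cap Y$ dense in $Y$, since $\lambda(\lambda-A)^{-1}y\to y$ as $\lambda\to\infty$ for $y\in Y$. This is standard for m-dissipative operators: the resolvent is bounded by $1/\lambda$, and the convergence holds on the dense domain. So the parts are densely defined and m-dissipative, and the claim follows.
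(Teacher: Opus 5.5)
Your proof is correct and follows essentially the same route as the paper. You first get m-dissipativity on $H_0\times H_1$ from the skew-selfadjoint-plus-bounded-dissipative structure, then pass to the closed invariant subspaces. The only difference is presentational: the paper gets invariance of both $H$ and $\overline{\ran}(A)$ at once from the inclusion $\ran(A)\subseteq\overline{\ran}(A)\subseteq H$ and cites Engel--Nagel for the subspace semigroup. You instead check resolvent invariance and density of the part's domain by hand, and you correctly read $H$ with the closure of $\ran(\sqrt{\beta}^{-1}C\sqrt{\alpha}^{-1})$.
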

\begin{proof}
Note that the provided operator generates a semi-group on $H_0\times H_1$ as the latter summand is skew-self-adjoint and the former is dissipative. The claim follows by observing that $\ran(A)\subseteq \overline{\ran}(A)\subseteq H$ and, hence, $A$ restricted to $H$ (or to $\overline{\ran}(A)$) generates a $C_0$-semigroup by \cite[ch. II sec. 2.3]{EN00}.
\end{proof}

\section{A 3-by-3 matrix representation}\label{sec:3b3}

With the perspective offered in the previous section, we may reduce complexity of the problem right away whilst still addressing the general situation. Moreover, it will be crucial to have closedness of the range of $C$ as well. We will gather these new assumptions in the next Hypothesis:
\begin{hypothesis}\label{hyp:fne0p} Assume \Cref{hyp:fne0}. Moreover, assume that $\alpha=1$ and $\beta=1$ and that $\ran(C)\subseteq H_1$ is closed.
\end{hypothesis}
The consequences of $\ran(C)\subseteq H_1$ being closed are plenty. For this we refer to \cite{PW26} (or the FA-toolbox in \cite{PZ20}) for a long list of applications. Since only a few of them are needed here, we gathered them in the following remark, which also contains the definions of $\iota_0,\kappa_0,\iota_1, \kappa_1$ used throughout.
\begin{remark}\label{rem:clr}
(a) By Banach's closed range theorem, see, e.g., \cite[Theorem IV.1.2]{Gol06}, $\ran(C)\subseteq H_1$ is closed if and only if $\ran(C^*)\subseteq H_0$ is closed. Thus, by \Cref{hyp:fne0p}, \emph{both} $\ran(C)\subseteq H_1$ and $\ran(C^*)\subseteq H_0$ are closed.

(b)  If \Cref{hyp:fne0p} holds, then $H_0$ and $H_1$ adhere to the direct orthogonal sum  decomposition, which in applications can be the Helmholtz-decomposition,
\[
   H_1 = \ran(C)\oplus \ker(C^*)\text{ and }H_0 = \ran(C^*)\oplus \ker(C).
\]
We introduce $\iota_1 \colon \ran(C)\hookrightarrow H_1$, the continuous embedding. $\iota_1^*\colon H_1\to \ran(C)$ is the (surjective) orthogonal projection. Similarly, denote by $\kappa_1\colon \ker(C^*)\hookrightarrow H_1$ the canonical embedding and $\iota_0\colon \ran(C^*)\hookrightarrow H_0$ and $\kappa_0\colon \ker(C)\hookrightarrow H_0$. It follows from the closed graph theorem that $\iota_0^*C\iota_1$ and $\iota_1C\iota_0$ are continuously invertible. Note that these operators are densely defined as the above abstract Helmholtz decomposition allows for
\[
\dom(C^*) = \ran(C)\cap \dom(C^*)\oplus \ker(C^*)\text{ and }\dom(C)= \ran(C^*)\cap\dom(C)\oplus \ker(C)
\]as $\ker(C^*)\subseteq \dom(C^*)$ and $\ker(C)\subseteq \dom(C)$.

(c) With the FA-toolbox, e.g., in \cite{PZ20}, a sufficient condition fo $C$ to have closed range is that $\dom(C)\cap \ker(C)^\bot \hookrightarrow H_0$ is compact; see also the \cite[Appendix]{DIW24} for the corresponding standard contradiction argument.
\end{remark}
Using \Cref{hyp:fne0p} and recalling $A$ from the previous section, we get
\[
A=  -\begin{pmatrix} \gamma  & 0 \\ 0 & 0 \end{pmatrix}- \begin{pmatrix} 0 & - C^* \\ C& 0 \end{pmatrix}.
\]
Next, we aim to use the abstract Helmholtz decomposition to get more detailed information about the behaviour of $A$. Note that this decomposition was already derived in \cite[Theorem 4.4]{Wa26} and successfully applied to obtain exponential stability result. For convenience we sketch the short proof.

\begin{theorem}[{{\cite[Theorem 4.4]{Wa26}}}]\label{thm:soldecom} Assume \Cref{hyp:fne0p}. Let $f\in H_0, g\in 
\ran(C)$, $z\in \C$ and let $(u,v)\in \dom(A)$, $v\in \ran(C)$.

Then the following conditions are equivalent:
\begin{enumerate}
\item[(i)] $(z-A)(u,v)=(f,g)$
\item[(ii)] With $u=\iota_0\iota^*_0 u+  \kappa_0\kappa^*_0 u$ and $v=\iota_1\iota^*_1 v$  we have  \[\Big(z\begin{pmatrix}1 & 0&0\\ 0 & 1& 0 \\ 0 & 0 & 1 \end{pmatrix}+
\begin{pmatrix} \iota_0^*\gamma \iota_0 & 0&\iota_0^*\gamma \kappa_0\\ 0 & 0& 0 \\ \kappa_0^*\gamma \iota_0 & 0 & \kappa_0^*\gamma \kappa_0 \end{pmatrix} + \begin{pmatrix}0 & -\iota_0^*C^*\iota_1 &0\\ \iota_1^*C \iota_0 & 0& 0 \\ 0 & 0 & 0 \end{pmatrix}\Big)\begin{pmatrix} \iota_0^*u \\ \iota_1^*v \\ \kappa_0^*u\end{pmatrix} = \begin{pmatrix} \iota_0^*f \\ \iota_1^*g \\ \kappa_0^*f\end{pmatrix}.\]
\end{enumerate}
\end{theorem}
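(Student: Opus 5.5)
The equivalence is obtained by testing the two component equations of $(z-A)(u,v)=(f,g)$ against the orthogonal projections from \Cref{rem:clr}(b). Under \Cref{hyp:fne0p}, $(z-A)(u,v)=(f,g)$ reads
\[
zu+\gamma u - C^*v = f,\qquad zv + Cu = g,
\]
with $u\in\dom(C)$ and $v\in\dom(C^*)\cap\ran(C)$.

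\textbf{Decompositions.} First I record which components survive. Since $v\in\ran(C)$, we have $\kappa_1^*v=0$, so $v=\iota_1\iota_1^*v$. From $H_0=\ran(C^*)\oplus\ker(C)$ we get $u=\iota_0\iota_0^*u+\kappa_0\kappa_0^*u$. Because $\ker(C)\subseteq\dom(C)$, also $\iota_0\iota_0^*u\in\dom(C)$. The same holds for $\kappa_0\kappa_0^*u$, so
\[
Cu=C\iota_0\iota_0^*u .
\]
Moreover $Cu\in\ran(C)$, hence $Cu=\iota_1\iota_1^*C\iota_0\iota_0^*u$. Similarly $C^*v\in\ran(C^*)$, which gives $\kappa_0^*C^*v=0$ and $C^*v=\iota_0\iota_0^*C^*\iota_1\iota_1^*v$.

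\textbf{(i)$\Rightarrow$(ii).} I apply $\iota_0^*$ and $\kappa_0^*$ to the first equation, using $\gamma u=\gamma\iota_0\iota_0^*u+\gamma\kappa_0\kappa_0^*u$. This yields the first and third rows of the $3\times3$ system; the $C^*$-term drops out of the third row because $\kappa_0^*C^*v=0$. Next I apply $\iota_1^*$ to the second equation, using $g\in\ran(C)$, which gives the middle row.

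\textbf{(ii)$\Rightarrow$(i).} Here I reassemble. Multiplying the first row by $\iota_0$ and the third row by $\kappa_0$ and adding them recovers the first equation, since $\iota_0\iota_0^*+\kappa_0\kappa_0^*=1$. For the second equation, the middle row gives $\iota_1^*(zv+Cu-g)=0$. Every term lies in $\ran(C)$, so $\iota_1$ applied to this yields $zv+Cu=g$.

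\textbf{Main obstacle.} The delicate point is domain bookkeeping. One must check that $\iota_0^*u\in\dom(\iota_1^*C\iota_0)$ exactly when $u\in\dom(C)$, and likewise that $\iota_1^*v\in\dom(\iota_0^*C^*\iota_1)$ exactly when $v\in\dom(C^*)\cap\ran(C)$. This follows from the decomposition of $\dom(C)$ and $\dom(C^*)$ in \Cref{rem:clr}(b), since kernels are contained in the domains. With that settled, the argument is purely algebraic.
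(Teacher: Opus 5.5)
Your proposal is correct and follows essentially the same route as the paper. You read $(z-A)(u,v)=(f,g)$ componentwise as $zu+\gamma u-C^*v=f$ and $zv+Cu=g$, insert $u=\iota_0\iota_0^*u+\kappa_0\kappa_0^*u$ and $v=\iota_1\iota_1^*v$, project with $\iota_0^*,\kappa_0^*,\iota_1^*$, and settle the domain questions via the decomposition of $\dom(C)$ and $\dom(C^*)$ in \Cref{rem:clr}(b); along the way you spell out the details the paper leaves as ``equally straightforward'', such as $\kappa_0^*C^*v=0$ and the use of $g\in\ran(C)$ in the converse.
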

\begin{proof}
Reading (i) line by line we get
\[
    z u + \gamma u -C^*v = f\text{ and } z v + Cu =g.
\]
Then we use the equations $u=\iota_0\iota^*_0 u+  \kappa_0\kappa^*_0 u$ and $v=\iota_1\iota^*_1 v$  and re-write the result in matrix form. We emphasise $(u,v)\in \dom(A)$ that yields $\iota_1^*v\in \dom(\iota_0^*C\iota_1)$ and $\iota_0^*u\in \dom(\iota_1^*C\iota_0)$ by \Cref{rem:clr}(b). The converse implication is equally straightforward.\end{proof}

\section{Strong and semi-uniform stability}\label{sec:stsu}

There is a well-established (and in fact still growing) machinery of results on non-exponential stability for $C_0$-semigroups. Here we have occasion to only use two well-known ones. The results we want to apply here are due to Batty--Duyckaerts as well as Arendt--Batty--Lyubich--Vu and offer a semi-uniform stability result and a criterion for strong stability in terms of the resolvent of the generator, respectively.

\begin{theorem}[Batty--Duyckaerts, \cite{BD08}]\label{thm:BD} Let $T$ be a bounded $C_0$-semigroup in a Banach space $X$ with generator $A$. If $\sigma(A)\cap \iu \R=\emptyset$, then there exists $f\colon [0,\infty)\to [0,\infty) $ with $f(t)\to 0$ as $t\to\infty$ such that
\[
   \forall x\in \dom(A)\colon \| T(t) x \|\leq f(t)\|x\|_{\dom(A)}.
\]
\end{theorem}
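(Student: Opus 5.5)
Since $T$ is bounded, $\{\Re z>0\}\subseteq\rho(A)$, and by hypothesis $0\in\iu\R\subseteq\rho(A)$. Hence $\|x\|_{\dom(A)}$ is equivalent to $\|Ax\|$ on $\dom(A)$, and the claim reduces to showing
\[
\|T(t)A^{-1}\|\to 0 \quad (t\to\infty).
\]
The function $f$ is then, up to a constant, $t\mapsto\|T(t)A^{-1}\|$. I would prove this decay with a Newman--Korevaar contour argument, as in the proof of the Ingham--Karamata Tauberian theorem.

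\emph{Contour.} Fix $R>0$. Since $\rho(A)$ is open and contains the compact segment $\iu[-R,R]$, there is $\delta=\delta(R)>0$ such that $R(z,A)$ is holomorphic and bounded, say by $M_R$, on the closed rectangle
\[
\dset{z}{-\delta\le\Re z\le 0,\ |\Im z|\le R}.
\]
Let $\Gamma$ be the closed contour formed by the right half circle $|z|=R$, $\Re z>0$, together with a path in $\{-\delta\le\Re z<0\}$ joining $-\iu R$ to $\iu R$. Set $S_t\coloneqq\int_0^t T(s)A^{-1}\,\dd s$. By Cauchy's formula applied to the entire operator-valued function
\[
z\mapsto e^{zt}\bigl(1+z^2/R^2\bigr)z^{-1}\int_0^t e^{-zs}T(s)A^{-1}\,\dd s,
\]
we get an integral representation of $S_t$ over $\Gamma$.

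\emph{Estimates.} Subtract from this the corresponding integral of $e^{zt}(1+z^2/R^2)z^{-1}R(z,A)A^{-1}$. On the right arc, the difference involves $\int_t^\infty e^{-zs}T(s)\,\dd s$, of norm at most $\sup\|T\|\,e^{-t\Re z}/\Re z$. The Newman weight satisfies $|1+z^2/R^2|=2\Re z/R$ on $|z|=R$, which cancels the $1/\Re z$ singularity, so this piece is $O(1/R)$ uniformly in $t$. On the left part, the resolvent term is bounded by $C(R)\,e^{-\delta t}$ (with a small modification near the endpoints $\pm\iu R$), which tends to $0$ as $t\to\infty$ for fixed $R$. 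The remaining term, $\int_0^t e^{-zs}T(s)\,\dd s$ on the left arc, is entire. Its integral over the left half circle is handled by deforming the contour to $|z|=R$, $\Re z<0$, where the same weight trick gives $O(1/R)$.

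\emph{Conclusion.} Altogether,
\[
\limsup_{t\to\infty}\|S_t-S_\infty\|\le C/R,
\]
where $S_\infty$ is the limit of $S_t$. Since $R$ is arbitrary, $S_t$ converges in operator norm. To pass from $S_t$ to $T(t)A^{-1}$, I would use $T(t)A^{-1}=A^{-1}+\int_0^t T(s)\,\dd s$, or apply the argument with one more power of $A^{-1}$ and then interpolate via the moment inequality. Combining the $O(1/R)$ and $C(R)e^{-\delta(R)t}$ bounds and letting $R=R(t)\to\infty$ slowly gives an explicit $f$ with $f(t)\to 0$.

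The main obstacle is the left half of the contour, where nothing is known about the resolvent away from the imaginary axis except on a thin strip of width $\delta(R)$. The plan there is to arrange that only entire functions, and no resolvents, are integrated on the far-left arc, and to place the factor $A^{-1}$ so that the compensating terms on the two arcs cancel exactly.
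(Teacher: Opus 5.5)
The paper gives no proof of this statement (it only cites Batty--Duyckaerts), and your Newman--Korevaar contour argument is correct in substance and is the standard mechanism behind the cited result: right semicircle, a path in the strip where the resolvent is bounded, weight $1+z^2/R^2$, and deformation of the entire part onto the left semicircle. The one slip is in the last step: with $S_t=\int_0^t T(s)A^{-1}\,\dd s$ you have $S_t=T(t)A^{-2}-A^{-2}$, so the argument as you set it up yields only $\|T(t)A^{-2}\|\to 0$, and the identity $T(t)A^{-1}=A^{-1}+\int_0^t T(s)\,\dd s$ does not by itself upgrade this. Either finish with the interpolation inequality $\|A^{-1}y\|^2\le C\|y\|\,\|A^{-2}y\|$ applied to $y=T(t)x$, or drop the factor $A^{-1}$ from the outset; your estimates use only $\sup_t\|T(t)\|$ and the resolvent bound near $\iu[-R,R]$, so you then get $\int_0^t T(s)\,\dd s+A^{-1}=A^{-1}T(t)\to 0$ in norm directly.
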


The ABLV-stability result, see \cite{AB88,LV88}, will also come in handy in the application part. We present a simplified version of it as follows.

\begin{theorem}[ABLV-Theorem]\label{thm:ABLV} Let $T$ be a bounded $C_0$-semigroup in a Hilbert space $H$ with generator $A$ with no eigenvalues on $\iu\R$ and assume that $\sigma(A)\cap \iu\R$ is countable. Then $T$ is \textbf{strongly stable}, that is, for all $x\in H$, then
\[
   \lim_{t\to\infty}\|T(t)x\|=0.
\]
\end{theorem}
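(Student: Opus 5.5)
The statement is the classical theorem of Arendt--Batty and Lyubich--V\~u, and I would prove it in two stages. First I would reduce it to the standard formulation, whose hypothesis concerns the \emph{adjoint}, namely $\sigma_p(A^*)\cap\iu\R=\emptyset$. Then I would sketch the standard proof of that formulation via the limit isometric semigroup.

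\emph{Reduction.} Fix $\eta\in\R$ and put $S(t)\coloneqq \e^{-\iu\eta t}T(t)$. This is again a bounded $C_0$-semigroup, and its generator is $A-\iu\eta$. Since $H$ is a Hilbert space, hence reflexive, the mean ergodic theorem applies to the bounded semigroup $S$ and gives
\[
H=\ker(\iu\eta-A)\oplus\overline{\ran}(\iu\eta-A).
\]
By assumption $\ker(\iu\eta-A)=\{0\}$, so $\ran(\iu\eta-A)$ is dense. Therefore
\[
\ker(-\iu\eta-A^*)=\ran(\iu\eta-A)^\bot=\{0\},
\]
where the sign comes from the complex conjugation in the Hilbert adjoint. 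As $\eta$ ranges over $\R$, so does $-\eta$, hence $\sigma_p(A^*)\cap\iu\R=\emptyset$. The hypotheses of the standard ABLV theorem are thus satisfied.

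\emph{Proof of the standard version.} Define the seminorm $\ell(x)\coloneqq\lim_{t\to\infty}\|T(t)x\|$. The limit exists because $t\mapsto\|T(t)x\|$ is non-increasing after renorming $H$ equivalently so that $T$ is contractive; this renorming turns $H$ into a reflexive Banach space, which is harmless. The kernel $H_0\coloneqq\ker\ell$ is closed and $T$-invariant, and $H_0=H$ is exactly the claim. Suppose $H_0\neq H$.

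1. Let $Y$ be the completion of $H/H_0$ with respect to $\ell$. Then $T$ induces a $C_0$-semigroup $V$ of isometries on $Y$. Its generator $B$ satisfies $\sigma(B)\subseteq\sigma(A)$ on $\iu\R$; more precisely, for $\Re\lambda>0$ resolvents pass to the quotient.

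2. I would show $\sigma(B)\subseteq\iu\R$. For an isometric semigroup the spectrum is either the closed left half-plane or lies in $\iu\R$. The first alternative is excluded because the boundary of $\sigma(B)$ would then be all of $\iu\R$, while it must sit inside the countable set $\sigma(A)\cap\iu\R$. Consequently $V$ extends to a $C_0$-group of isometries.

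3. Since $Y\neq\{0\}$, $\sigma(B)$ is non-empty; this follows from Liouville's theorem combined with growth bounds for groups. A non-empty countable compact-in-$\iu\R$ set has an isolated point $\iu\eta$, by the Baire category theorem. Because $V$ is a bounded group, the resolvent near $\iu\eta$ has at most a simple pole: $\|R(\lambda,B)\|\le 1/|\Re\lambda|$ on both sides. Hence $\iu\eta$ is an eigenvalue of $B^*$ (via the Riesz projection), with eigenvector $y^*\neq0$.

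4. Composing $y^*$ with the quotient map $H\to Y$ yields a nonzero functional $x^*$ on $H$. It satisfies $T(t)^*x^*=\e^{\iu\eta t}x^*$, so $x^*$ is an eigenvector of $A^*$ at $\iu\eta$. This contradicts the reduction step, so $H_0=H$.

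I expect the main obstacle to be step 3, together with the spectral inclusion in step 1. Both need careful resolvent estimates on the quotient: one must control $R(\lambda,B)$ for $\Re\lambda<0$ using only the countability of $\sigma(A)\cap\iu\R$. I would first try to follow the original Arendt--Batty argument for this part.
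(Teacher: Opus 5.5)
The paper does not prove this statement. It quotes it from \cite{AB88,LV88} as a ``simplified version'' of the Arendt--Batty--Lyubich--V\~u theorem, whose usual hypothesis is $\sigma_p(A^*)\cap\iu\R=\emptyset$ rather than $\sigma_p(A)\cap\iu\R=\emptyset$. Your proposal is correct and does more than the paper.

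Your mean-ergodic reduction is exactly the step the paper leaves implicit when it swaps the adjoint condition for the one on $A$. For the bounded semigroup $\e^{-\iu\eta t}T(t)$ on the reflexive space $H$ one has $H=\ker(\iu\eta-A)\oplus\overline{\ran}(\iu\eta-A)$, so no eigenvalue of $A$ at $\iu\eta$ forces $\ker(-\iu\eta-A^*)=\{0\}$. This is the only place the Hilbert (reflexive) setting is needed. The passage between the Banach adjoint on $Y$ and the Hilbert adjoint on $H$ only conjugates eigenvalues, which keeps them in $\iu\R$.

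Your second part is the Lyubich--V\~u limit-isometric-semigroup proof, and its outline is sound. A few points should be tightened rather than feared.

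\emph{Spectral inclusion.} This is easier than you expect and holds on all of $\C$. For every $\mu\in\rho(A)$, $R(\mu,A)$ commutes with $T(t)$, so $\ell(R(\mu,A)x)\le\|R(\mu,A)\|\,\ell(x)$ in the renormed space. Hence $R(\mu,A)$ descends to a bounded operator on $Y$. The resolvent identity, together with $R(\lambda,B)q=qR(\lambda,A)$ for $\Re\lambda>0$ and density of $q[H]$, identifies it with $R(\mu,B)$. Thus $\sigma(B)\subseteq\sigma(A)$, and no control of $R(\lambda,B)$ for $\Re\lambda<0$ is needed beyond the isometry bound. Countability enters only to rule out the half-plane alternative and to produce an isolated point. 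For that, note $\sigma(B)$ is closed, not necessarily compact; closed and countable suffices by Baire.

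\emph{Simple pole.} The bound $\|R(\lambda,B)\|\le 1/|\Re\lambda|$ alone does not visibly exclude a higher-order pole or an essential singularity at $\iu\eta$, since it degenerates along $\iu\R$. You need one of two arguments:
\begin{enumerate}
\item Gelfand's theorem. On the Riesz spectral subspace $Y_1\neq\{0\}$ the restriction $B_1$ is bounded with $\sigma(B_1)=\{\iu\eta\}$ and generates a bounded group, hence $B_1=\iu\eta$.
\item A weighted contour estimate. Since $V|_{Y_1}$ is an isometric group, $\|(1+z^2/r^2)R(\iu\eta+z,B_1)\|\le 2/r$ on $|z|=r$. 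Integrating against $z^m$ gives $\|N^m+r^{-2}N^{m+2}\|\le 2r^m$ for $N=B_1-\iu\eta$. Taking $m=0,1$ and letting $r\to0$ yields $N^2=0$, then $N=0$.
\end{enumerate}
The same weight with the maximum principle on discs $|\lambda|\le r$ is the precise form of your ``Liouville plus growth bounds'' argument for $\sigma(B)\neq\emptyset$.

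With these details, $\iu\eta$ is an eigenvalue of $B$. Taking $y^*=P^*z^*\neq0$ gives an eigenvector of $B^*$, and composing with the dense-range map $q$ gives a nonzero eigenvector of the adjoint of $A$ on $\iu\R$. This contradicts your reduction, so the argument closes.
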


Next, specify the assumption on $\gamma$.
\begin{hypothesis}\label{hyp:gammasus}
Let $\gamma\in L(H_0)$. Assume there exists a decomposition of $H_0$ into closed subspaces $U,U^{\bot}$, $\gamma_{U}\in L(U)$, $\gamma_{U^\bot}\in L(U^\bot)$ with $\Re {\gamma}_U \geq c$ for some $c>0$ and $\gamma_{U^\bot}=\gamma_{U^\bot}^*\geq 0$ such that for $x = x_{U}+x_{U^\bot}\in U\oplus U^\bot$ we have
\[
   \gamma x = {\gamma}_U x_U+{\gamma}_{U^\bot} x_{U^\bot}.
\]
\end{hypothesis}
\begin{remark}\label{rem:observgamma}
(a) Written in $2$-by-$2$ block operator matrix form with respect to $U\oplus U^\bot$, $\gamma$ satisfying \Cref{hyp:gammasus} admits the representation
\[
   \gamma = \begin{pmatrix} {\gamma}_U & 0 \\ 0 & {\gamma}_{U^\bot} \end{pmatrix}.
\]
Moreover, note that $\Re \gamma\geq 0$ regardless.

(b) Let $\gamma \in L(H_0)$. Then $\gamma$ satisfies \Cref{hyp:gammasus} if and only if $\gamma^*$ satisfies \Cref{hyp:gammasus}. In either case, we can use the same decomposition of $H_0$ and the same constant $c$.

(c) If $\gamma$ satisfies \Cref{hyp:gammasus}, then any non-zero $\lambda\in \R$ leads to $\iu \lambda \in \rho(\gamma)$. Indeed, using the respresentation in (a), we deduce
\[
   \iu \lambda + \gamma =  \begin{pmatrix} \iu \lambda+ {\gamma}_U & 0 \\ 0 & \iu \lambda+{\gamma}_{U^\bot} \end{pmatrix}.
\]Then the top left entry is invertible since $\Re  (\iu \lambda+ {\gamma}_U)\geq c$ and the bottom right one is invertible since $\gamma_{U^\bot}$ is self-adjoint and, thus, has only real spectrum.
\end{remark}
As a consequence of \Cref{thm:BD} and \Cref{thm:ABLV}, we need to study the resolvent of $A$ on the imaginary axis. For this, we study the resolvent problem and consider conditions that allow us to deduce that for all $\lambda\in \R$
\[
   \iu \lambda -A
\]is one-to-one and has closed range. The injectivity part can only be established with an assumption on the abstract side and a unique continuation property for particular applications. The surjectivity part requires two different approaches for $\lambda \neq 0$ and $\lambda =0$. We will show that $\iu \lambda-A$ is onto for non-vanishing $\lambda$ and for $\lambda=0$ we will reduce the question to the consideration of a particular operator the closed range property of which can be shown in the application part.

The case for $\lambda=0$ can be ignored, if one is interested in strong stability, only.

\begin{theorem}\label{thm:sts} Assume \Cref{hyp:fne0p} and \Cref{hyp:gammasus} and that $\dom(C)\cap \ker(C)^\bot \hookrightarrow H_0$ is compact. Assume that 
$\iu \lambda-A$ and $\iu \lambda-A^*$ are one-to-one for all $\lambda\in \R\setminus\{0\}$.

Then $A_0\coloneqq A|_{\ker(A)^\bot}^{\overline{\ran}(A)}$ generates a strongly stable $C_0$-semi-group.
\end{theorem}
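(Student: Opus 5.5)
We will apply the ABLV-Theorem (\Cref{thm:ABLV}) to $A_0$. First we check that $A_0$ is well defined and generates a contraction semigroup. Since $A$ is m-dissipative on $H$ (\Cref{thm:sgg}), reflexivity of $H$ gives the ergodic decomposition $H=\ker(A)\oplus\overline{\ran}(A)$. Moreover, $\ker(A)=\ker(A^*)$ for m-dissipative operators on Hilbert spaces: if $Ax=0$ then $\Re\langle Ax,x\rangle=0$, and dissipativity yields $A^*x=0$. Hence $\ker(A)^\bot=\overline{\ran}(A)$, this space is invariant under the semigroup, and $A_0$ is the part of $A$ in it. By \Cref{thm:sgg}, $A_0$ generates a contraction semigroup, which is in particular bounded.

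Next, $A_0$ has no eigenvalues on $\iu\R$. For $\lambda\ne0$ this holds because $\iu\lambda-A$ is one-to-one by assumption. For $\lambda=0$ it holds because $\ker(A_0)=\ker(A)\cap\ker(A)^\bot=\{0\}$.

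It remains to show that $\sigma(A_0)\cap\iu\R\subseteq\{0\}$, which is countable; this is the main step. Fix $\lambda\in\R\setminus\{0\}$. We will show that $\iu\lambda-A$ is a Fredholm operator of index $0$ on $H=H_0\oplus\ran(C)$, using the $3$-by-$3$ representation of \Cref{thm:soldecom}. Set $z=\iu\lambda$. Writing $\mathcal{C}\coloneqq\iota_1^*C\iota_0$, the operator there has the form $z+\Gamma+\mathcal{S}$. Here $\Gamma$ is the bounded $\gamma$-block. The operator $\mathcal{S}=\begin{pmatrix}0&-\mathcal{C}^*&0\\ \mathcal{C}&0&0\\0&0&0\end{pmatrix}$ is skew-selfadjoint, with $\mathcal{C}$ continuously invertible by \Cref{rem:clr}(b). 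The compactness of $\dom(C)\cap\ker(C)^\bot\hookrightarrow H_0$ means that $\mathcal{C}^{-1}$ is compact. Hence $(\mathcal{C}^*)^{-1}$ is compact too, which is equivalent to $\dom(C^*)\cap\ran(C)\hookrightarrow H_1$ being compact.

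The plan is to compare $z+\Gamma+\mathcal{S}$ with the block diagonal operator
\[
\mathcal{D}\coloneqq \begin{pmatrix} z+\iota_0^*\gamma\iota_0 & -\iota_0^*C^*\iota_1 &0\\ \iota_1^*C\iota_0 & z & 0\\ 0&0& z+\kappa_0^*\gamma\kappa_0\end{pmatrix}.
\]
The difference from the full operator consists of the off-diagonal entries $\iota_0^*\gamma\kappa_0$ and $\kappa_0^*\gamma\iota_0$. These are not compact in general, so this comparison needs care.

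A cleaner route, which is the one we would pursue, is to show directly that $\iu\lambda-A$ has closed range via a contradiction (Weyl sequence) argument. Suppose $(u_n,v_n)\in\dom(A)$ with $\|(u_n,v_n)\|=1$ and $(\iu\lambda-A)(u_n,v_n)\to0$. Taking real parts of the inner product gives $\Re\langle\gamma u_n,u_n\rangle\to0$. By \Cref{hyp:gammasus}, this forces $P_Uu_n\to0$ and $\gamma_{U^\bot}^{1/2}P_{U^\bot}u_n\to0$, so $\gamma u_n\to0$. The equations then reduce to $\iu\lambda u_n-C^*v_n\to0$ and $\iu\lambda v_n+Cu_n\to0$. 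Since $\lambda\neq0$, the first relation bounds $C^*v_n$, so $(v_n)$ is bounded in $\dom(C^*)\cap\ran(C)$ and precompact by compactness. Likewise $\iota_0^*u_n$ is bounded in $\dom(C)\cap\ker(C)^\bot$, hence precompact. For the component $\kappa_0^*u_n$, the relation $\iu\lambda u_n-C^*v_n\to0$ together with $C^*v_n\in\ran(C^*)$ gives $\kappa_0^*u_n\to0$.

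Passing to a subsequence, $(u_n,v_n)\to(u,v)$ with $\|(u,v)\|=1$. Closedness of $A$ gives $(u,v)\in\dom(A)$ and $(\iu\lambda-A)(u,v)=0$, contradicting injectivity. Hence $\iu\lambda-A$ is bounded below. The same argument applies to $A^*=-\gamma^*-\mathcal{S}$; by \Cref{rem:observgamma}(b), $\gamma^*$ satisfies \Cref{hyp:gammasus} as well. So $\iu\lambda-A^*=-((-\iu\lambda)-A)^*$ type operators are injective with closed range, and $\iu\lambda-A$ is therefore bijective.

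Finally, $\iu\lambda-A_0$ is invertible on $\overline{\ran}(A)$ because that space reduces $A$. The main obstacle is precisely this compactness extraction for $\lambda\ne0$, in particular controlling the $\ker(C)$-component via $\lambda\neq0$. With it, $\sigma(A_0)\cap\iu\R\subseteq\{0\}$, and \Cref{thm:ABLV} yields strong stability.
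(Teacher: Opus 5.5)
Your proof is correct, but its central step differs from the paper's.

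The paper proves closed range of $\iu\lambda-A$ for $\lambda\neq0$ (\Cref{thm:blcrr}) without invoking injectivity. It uses the invertibility of $\iu\lambda+\kappa_0^*\gamma\kappa_0$ (\Cref{lem:gaminv}, via a rotation by $\e^{\iu\alpha}$) to conjugate $B(\lambda)$ with the triangular isomorphisms $T_1(\lambda),T_2(\lambda)$. These remove exactly the non-compact couplings $\iota_0^*\gamma\kappa_0$, $\kappa_0^*\gamma\iota_0$ you flagged. It then applies Fredholm stability of the compact-resolvent skew block under a bounded, hence relatively compact, perturbation. It also obtains $\ker(A)=\ker(A^*)$ from the structural \Cref{thm:kerneladj}.

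Your Weyl-sequence argument bypasses all of this:
\begin{itemize}
\item dissipativity gives $\Re\langle\gamma u_n,u_n\rangle\to0$, and \Cref{hyp:gammasus} upgrades this to $\gamma u_n\to0$;
\item compactness controls the $\ran(C^*)$- and $\ran(C)$-components;
\item $\lambda\neq0$ kills the $\ker(C)$-component;
\item your kernel identity $\ker(A)=\ker(A^*)$ holds for every m-dissipative operator, so it needs no assumption on $\gamma$.
\end{itemize}
This is more elementary and shows exactly where $\lambda\neq0$ is used; the argument visibly fails at $\lambda=0$.

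What the paper's block-matrix route buys is reusability. The same decoupling at $\lambda=0$ (\Cref{thm:clr0}) isolates $\kappa_0^*\gamma\kappa_0$ as the precise obstruction. That is what \Cref{thm:sus} and the counterexample \Cref{thm:counterabstr} rest on.

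A few slips, none affecting the logic:
\begin{itemize}
\item The skew part changes sign under adjunction, so $A^*$ carries $+\mathcal{S}$ rather than $-\mathcal{S}$.
\item $\iu\lambda-A^*=((-\iu\lambda)-A)^*$, without the extra minus sign.
\item Rerunning the compactness argument for $A^*$ is unnecessary. Closed range of $\iu\lambda-A$ together with the assumed injectivity of $(\iu\lambda-A)^*=-\iu\lambda-A^*$ already gives surjectivity.
\end{itemize}
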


\begin{theorem}\label{thm:sus} Assume  \Cref{hyp:fne0p} and  \Cref{hyp:gammasus} and that $\dom(C)\cap \ker(C)^\bot \hookrightarrow H_0$ is compact. If \begin{enumerate}
\item[(a)] $\iu \lambda-A$ and $\iu \lambda-A^*$ are one-to-one for all $\lambda\in \R\setminus\{0\}$;
\item[(b)] $\kappa_0^*\gamma\kappa_0$ has closed range, where $\kappa_0 \colon \ker(C)\hookrightarrow H_1$ is the canonical embedding,
\end{enumerate}
then $A_0\coloneqq A|_{\ker(A)^\bot}^{\ran(A)}$ generates a semi-uniformly stable $C_0$-semi-group.
\end{theorem}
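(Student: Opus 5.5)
The proof will verify the hypothesis $\sigma(A_0)\cap\iu\R=\emptyset$ of \Cref{thm:BD}. I will work in the $3$-by-$3$ representation of \Cref{thm:soldecom}, on $H=H_0\oplus\ran(C)$.

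\emph{Step 1: splitting off the kernel.} If $(u,v)\in\dom(A)$ satisfies $A(u,v)=0$, then $0=\Re\langle A(u,v),(u,v)\rangle=-\Re\langle\gamma u,u\rangle$. By \Cref{hyp:gammasus} this reads $\Re\langle\gamma_U u_U,u_U\rangle+\langle\gamma_{U^\bot}u_{U^\bot},u_{U^\bot}\rangle=0$. Hence $u_U=0$ and $\gamma_{U^\bot}^{1/2}u_{U^\bot}=0$, so $\gamma u=0=\gamma^*u$ and $(u,v)\in\ker(A^*)$. By symmetry (\Cref{rem:observgamma}(b)) we get $\ker(A)=\ker(A^*)$. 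Thus $H=\ker(A)\oplus\overline{\ran}(A)$ reduces $A$. The part $A_0$ is injective and generates a contraction semigroup on $\overline{\ran}(A)$ by \Cref{thm:sgg}. It then suffices to show two things: $\ran(A)$ is closed, which makes $0\in\rho(A_0)$ and lets us write $\ran(A)$ in place of $\overline{\ran}(A)$; and $\iu\lambda-A$ is bijective for every $\lambda\neq0$, which passes to the reduced part $A_0$.

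\emph{Step 2: $\lambda\neq0$.} Put $D\coloneqq \iu\lambda+\kappa_0^*\gamma\kappa_0$ on $\ker(C)$. First I show $D$ is invertible by bounding its numerical range away from $0$. For $x\in\ker(C)$ with $\|x\|=1$ we have $\Re\langle Dx,x\rangle\geq c\|x_U\|^2$. Since $\gamma_{U^\bot}$ is self-adjoint, $\Im\langle Dx,x\rangle=\lambda+\Im\langle\gamma_Ux_U,x_U\rangle$, whose modulus is at least $|\lambda|-\|\gamma\|\|x_U\|^2$. Distinguishing $\|x_U\|^2\geq\delta$ from $\|x_U\|^2<\delta$, with $\delta$ small relative to $|\lambda|/\|\gamma\|$, gives $|\langle Dx,x\rangle|\geq c'>0$. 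The same bound holds for $D^*$, so $D$ is invertible.

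Next I eliminate the $\ker(C)$-component via the Schur complement. This leaves, on $\ran(C^*)\oplus\ran(C)$, the operator
\[
S(\lambda)=\iu\lambda+\begin{pmatrix}\iota_0^*\gamma\iota_0-\iota_0^*\gamma\kappa_0D^{-1}\kappa_0^*\gamma\iota_0 & -\iota_0^*C^*\iota_1\\ \iota_1^*C\iota_0&0\end{pmatrix}.
\]
This is a bounded perturbation of a skew-self-adjoint operator. That operator has compact resolvent because $\dom(C)\cap\ker(C)^\bot\hookrightarrow H_0$ is compact, and by closedness of ranges the same then holds for $\dom(C^*)\cap\ran(C)$. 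Hence $S(\lambda)-\mu$ is invertible for large real $\mu$ with compact inverse, and $S(\lambda)$ is Fredholm of index $0$. Injectivity of $\iu\lambda-A$ from (a) transfers to $S(\lambda)$, so $S(\lambda)$ is bijective. Hence $\iu\lambda-A$ is bijective with bounded inverse. (Injectivity of $\iu\lambda-A^*$ is then automatic, but it is harmless.)

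\emph{Step 3: $\lambda=0$.} Solving \Cref{thm:soldecom} with $z=0$ proceeds row by row. The second row gives $\iota_0^*u=(\iota_1^*C\iota_0)^{-1}\iota_1^*g$. The third row demands $\kappa_0^*\gamma\kappa_0\,\kappa_0^*u=\kappa_0^*f-\kappa_0^*\gamma\iota_0\iota_0^*u$. The first row can always be solved for $\iota_1^*v$, since $\iota_0^*C^*\iota_1$ is onto $\ran(C^*)$. Hence
\[
\ran(A)=\dset[\big]{(f,g)}{\kappa_0^*f-\kappa_0^*\gamma\iota_0(\iota_1^*C\iota_0)^{-1}\iota_1^*g\in\ran(\kappa_0^*\gamma\kappa_0)}.
\]
This is the preimage of a closed set under a bounded map by (b), hence closed. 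With Steps 1–3, $\sigma(A_0)\cap\iu\R=\emptyset$, and \Cref{thm:BD} applied to the contraction semigroup generated by $A_0$ yields the claim.

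I expect the main obstacle to be Step 2, specifically the uniform invertibility of $D$ and the verification that $S(\lambda)$ genuinely has index zero (compact resolvent of the reduced skew part plus the bounded perturbation argument). Step 3 is a bookkeeping computation once (b) is available.
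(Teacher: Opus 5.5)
Your proof is correct and follows essentially the paper's route: $\ker(A)=\ker(A^*)$ via the dissipativity argument of \Cref{thm:kerneladj}, then Schur-complement elimination of the $\ker(C)$-component together with a compact-resolvent Fredholm argument for $\lambda\neq0$ (\Cref{thm:blcrr}), then elimination at $\lambda=0$ reducing closedness of $\ran(A)$ to that of $\kappa_0^*\gamma\kappa_0$ (\Cref{thm:clr0}), and finally \Cref{thm:BD}. The deviations are minor: you use a numerical-range case split instead of the rotation trick of \Cref{lem:gaminv}, and an explicit description of $\ran(A)$ instead of \Cref{lem:block}; you also use Fredholm index zero, which makes injectivity of $\iu\lambda-A^*$ redundant, whereas the paper combines closed range with that injectivity.
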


The proofs of \Cref{thm:sus} and \Cref{thm:sts} will be carried out in two steps. The main task is establishing the closed range of $\iu \lambda -A$, for which we use results of the next section. We provide these results in the summary next. 

\begin{theorem}\label{thm:clrsum}  Assume  \Cref{hyp:fne0p} and  \Cref{hyp:gammasus} and that $\dom(C)\cap \ker(C)^\bot \hookrightarrow H_0$ is compact. Then
\begin{enumerate}
\item[(a)] $A$ has closed range if and only if $\kappa_0^*\gamma \kappa_0$ has closed range.
\item[(b)] For all $\lambda\in \R\setminus\{0\}$, $\iu \lambda-A$ has closed range.
\item[(c)] $\ker(A)=\ker(A^*)$.
\end{enumerate}
\end{theorem}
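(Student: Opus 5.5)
We work throughout with the $3$-by-$3$ representation of \Cref{thm:soldecom}. We write $\mathcal{C}\coloneqq\iota_1^*C\iota_0$, which is continuously invertible by \Cref{rem:clr}(b), so that $\iota_0^*C^*\iota_1=\mathcal{C}^*$. We write $\gamma_{jk}$ for the entries of $\gamma$ relative to $\ran(C^*)\oplus\ker(C)$; for instance $\gamma_{00}=\iota_0^*\gamma\iota_0$ and $\gamma_{\kappa\kappa}=\kappa_0^*\gamma\kappa_0$. The key observation is that the compactness assumption makes $\mathcal{C}^{-1}\colon\ran(C)\to\ran(C^*)$ compact. Indeed, $\mathcal{C}^{-1}$ maps boundedly into $\dom(C)\cap\ker(C)^\bot$ with the graph norm, and this space embeds compactly into $H_0$. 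Hence the same holds for $(\mathcal{C}^*)^{-1}$. All three parts then reduce to a Schur-complement or Fredholm argument in which the $(\ran(C^*),\ran(C))$-block is treated as the invertible ``principal part'', and the only non-compact remainder lives on $\ker(C)$.

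For (a), $z=0$. The middle row gives $\mathcal{C}\iota_0^*u=\iota_1^*g$, so $\iota_0^*u=\mathcal{C}^{-1}\iota_1^*g$ is determined by $g$. The first row then determines $\iota_1^*v$ via $(\mathcal{C}^*)^{-1}$, and the third row reduces to
\[
\gamma_{\kappa\kappa}\kappa_0^*u=\kappa_0^*f-\kappa_0^*\gamma\iota_0\mathcal{C}^{-1}\iota_1^*g .
\]
Thus $(f,g)\in\ran(A)$ if and only if $\kappa_0^*f-\kappa_0^*\gamma\iota_0\mathcal{C}^{-1}\iota_1^*g\in\ran(\gamma_{\kappa\kappa})$. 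In other words, $\ran(A)$ is the preimage of $\ran(\gamma_{\kappa\kappa})$ under the bounded surjection $(f,g)\mapsto \kappa_0^*f-\kappa_0^*\gamma\iota_0\mathcal{C}^{-1}\iota_1^*g$ onto $\ker(C)$. A preimage under a bounded surjection is closed if and only if the target set is closed: one direction is continuity, and the other follows from the open mapping theorem, since the map is a quotient map. This gives (a).

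For (b), fix $z=\iu\lambda$ with $\lambda\neq0$. First we eliminate $\iota_1^*v$ through the second row, $\iota_1^*v=z^{-1}(\iota_1^*g-\mathcal{C}\iota_0^*u)$. This does not yield a bounded elimination, because $\mathcal{C}$ is unbounded. So instead we write the full operator as $M+K$, where
\[
M=\begin{pmatrix} z+\gamma_{00} & -\mathcal{C}^* & \gamma_{0\kappa}\\ \mathcal{C} & z & 0\\ \gamma_{\kappa0}&0&z+\gamma_{\kappa\kappa}\end{pmatrix},
\]
and argue as follows. The block $z+\gamma_{\kappa\kappa}$ is invertible on $\ker(C)$; this is the step I expect to be the main obstacle. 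I would try a variant of \Cref{rem:observgamma}(c): on $\ker(C)$ we have $\Re\gamma_{\kappa\kappa}\ge0$, and for $\lambda\ne0$ we get $\|(\iu\lambda+\gamma_{\kappa\kappa})x\|\,\|x\|\ge|\Im\langle(\iu\lambda+\gamma_{\kappa\kappa})x,x\rangle|$. That alone is not enough, so I would instead argue via closed range directly. Suppose $(z-A)(u_n,v_n)\to0$ with $\|(u_n,v_n)\|=1$. Dissipativity gives $\Re\langle\gamma u_n,u_n\rangle\to0$, hence $P_Uu_n\to0$ (because $\Re\gamma_U\ge c$) and $\gamma_{U^\bot}^{1/2}P_{U^\bot}u_n\to0$. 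Moreover $\iota_0^*u_n$ is bounded in the graph norm of $\mathcal{C}$ through the first two rows, because $\mathcal{C}^*\iota_1^*v_n$ and $\mathcal{C}\iota_0^*u_n$ are bounded. By compactness we may pass to a subsequence along which $\iota_0^*u_n$ and, symmetrically, $\iota_1^*v_n$ converge strongly. The limit satisfies the eigen-equation with a normalisation. Combining this with $\gamma u_n\to\gamma u$ (which uses $\gamma_{U^\bot}^{1/2}$ and $P_Uu_n\to0$) shows that the kernel component is controlled by the third row $(z+\gamma_{\kappa\kappa})\kappa_0^*u_n\approx-\gamma_{\kappa0}\iota_0^*u_n$. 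The remaining task is to show $\kappa_0^*u_n$ converges. Here $\Im$ of the third row with $\lambda\ne0$ should give $\lambda\|\kappa_0^*u_n\|^2\approx-\Im\langle\gamma u_n,\kappa_0\kappa_0^*u_n\rangle$ plus convergent terms. Since $\gamma u_n$ is close to $\gamma_{U^\bot}P_{U^\bot}u_n$, which tends to $0$ by the self-adjointness and $\gamma_{U^\bot}^{1/2}u_n\to0$, this forces Cauchy behaviour. The upshot is the standard conclusion: $z-A$ is injective modulo a finite-dimensional kernel and has closed range.

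For (c), if $Au=0$ then $0=\Re\langle Au,u\rangle=-\Re\langle\gamma u_0,u_0\rangle$ for the first component $u_0$. Hence $P_Uu_0=0$ and $\gamma_{U^\bot}^{1/2}P_{U^\bot}u_0=0$, so $\gamma u_0=0=\gamma^*u_0$ by \Cref{rem:observgamma}(b) and self-adjointness. Therefore $A^*u=Au=0$, and symmetry gives the reverse inclusion.
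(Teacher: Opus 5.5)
Your proposal is correct. Parts (a) and (c) follow the paper in substance, while (b) takes a genuinely different route.

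\textbf{Parts (a) and (c).} In (a), row two fixes $\iota_0^*u=\mathcal{C}^{-1}\iota_1^*g$, row one is always solvable for $\iota_1^*v$, and only row three constrains the data. This is the same observation as the vanishing of $(a^{-1})_{11}$ in the Schur-complement proof of \Cref{thm:clr0}. You package it as a preimage under a bounded surjection, which even has the bounded right inverse $y\mapsto(\kappa_0y,0)$, so the open mapping theorem is not needed. Part (c) is exactly the dissipativity argument of \Cref{thm:kerneladj}.

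\textbf{Part (b).} The paper conjugates $B(\lambda)$ with the triangular isomorphisms $T_1(\lambda),T_2(\lambda)$ to split off the $\ker(C)$-block. This needs invertibility of $\iu\lambda+\kappa_0^*\gamma\kappa_0$, which \Cref{lem:gaminv} obtains via the rotation by $\e^{\iu\alpha}$. The remaining block is then a bounded perturbation of an invertible skew operator with compact inverse. That argument in fact shows $\iu\lambda-A$ is Fredholm of index zero.

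You instead verify the sequential criterion for upper semi-Fredholmness. \Cref{hyp:gammasus} gives $\|\gamma u\|^2\le K\,\Re\langle\gamma u,u\rangle$, so dissipativity forces $\gamma u_n\to0$. Compactness of $\mathcal{C}^{-1}$ and $(\mathcal{C}^*)^{-1}$ handles the $\ran(C^*)\oplus\ran(C)$-components, and row three with $\lambda\neq0$ handles the $\ker(C)$-component. This bypasses \Cref{lem:gaminv} entirely, so the step you flagged as the main obstacle is not needed. The price is that you get only a finite-dimensional kernel plus closed range, which is all the statement asks.

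\textbf{Write-up.} Part (b) is muddled as written and should be cleaned up as follows.
\begin{enumerate}
\item Drop the abandoned $M+K$ splitting and the unproved invertibility claim.
\item Write $(\iu\lambda-A)(u_n,v_n)=(f_n,g_n)\to0$ and replace ``$\gamma u_n\to\gamma u$'' by $\gamma u_n\to0$.
\item Row three then gives directly $\kappa_0^*u_n=(\iu\lambda)^{-1}(\kappa_0^*f_n-\kappa_0^*\gamma u_n)\to0$. No imaginary-part estimate or ``Cauchy'' argument is needed.
\item State the criterion you invoke explicitly: a closed operator $T$ for which every bounded sequence $(x_n)$ with $Tx_n\to0$ has a convergent subsequence has finite-dimensional kernel and closed range.
\end{enumerate}
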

\begin{remark}
The statement in (c) merely requires \Cref{hyp:fne0p} and \Cref{hyp:gammasus} (and even these assumptions are too much, see \Cref{thm:kerneladj} below). Note that this result is provided in a setting related to Maxwell's equations in \cite[Lemma 3.5]{NS25} in disguise of the statement $\ran(A)^\bot=\ker(A)$ with a rather technical proof.
\end{remark}
We are now in a position to prove the abstract stability theorems.

\begin{proof}[Proofs of \Cref{thm:sts} and \Cref{thm:sus}] Note that dissipativity of $A$ carries over to the same property for $A_0$ and therefore the generated semi-group is a semi-group of contractions. The only difference in assumptions between the two theorems is (b) in \Cref{thm:sus}. For the proofs of the stability theorems, we will use \Cref{thm:ABLV} and \Cref{thm:BD}.

Assuming (a) only, we show strong stability first. Note that by construction, $A_0$ is one-to-one. Next, we will show  $\sigma(A_0)\cap \iu \R\subseteq \{0\}$. Then  \Cref{thm:sts} follows from \Cref{thm:ABLV}. 

Since $\ker(A)=\ker(A^*)$, by \Cref{thm:clrsum}(c), the Hilbert space admits the decomposition $\ker(A)\oplus \overline{\ran}(A)$. In particular, $\iu \lambda -A_0$ leaves $\overline{\ran}(A)$ invariant (and so does its adjoint, which coincides with $-\iu\lambda - (A^*)_0$). Thus, in the Hilbert space $\ker(A)\oplus \overline{\ran}(A)$, we can write $A$ in block operator matrix form as
\[
   \begin{pmatrix} A_0 & 0 \\ 0 & 0
   \end{pmatrix}.
\]
 Next, for $\lambda\in \R\setminus\{0\}$ as $\iu\lambda-A$ has closed range, so has $\iu\lambda -A_0$. To establish that $\iu\lambda-A_0$ is onto, it suffices to see that $-\iu \lambda -A_0^*$ is one-to-one, which is true by assumption. Hence, the closed graph theorem confirms that $\iu \lambda \in \rho(A_0)$. Thus,
$\sigma(A_0)\cap \iu \R\subseteq \{0\}$, as desired and \Cref{thm:sts} follows.

If, additionally, (b) from \Cref{thm:sus} holds, then $\ran(A)$ is closed by \Cref{thm:clrsum}. Hence, $A_0$ is invertible, by the closed graph theorem, and we infer $\sigma(A_0)\cap \iu \R=\emptyset$ and \Cref{thm:sus} follows with an application of \Cref{thm:BD}.
\end{proof}

Next, we come to the proof of \Cref{thm:clrsum} in the next section.

\section{Closed range statements for the system operator}\label{sec:clr}

Throughout this section, we adopt \Cref{hyp:fne0p} and \Cref{hyp:gammasus}. Moreover, assume that $\dom(C)\cap \ker(C)^\bot \hookrightarrow H_0$ is compact. We introduce for $\lambda\in \R$
\begin{equation}\label{eq:bl}B(\lambda)\coloneqq \Big(\iu\lambda \begin{pmatrix}1 & 0&0\\ 0 & 1& 0 \\ 0 & 0 & 1 \end{pmatrix}+
\begin{pmatrix} \iota_0^*\gamma \iota_0 & 0&\iota_0^*\gamma \kappa_0\\ 0 & 0& 0 \\ \kappa_0^*\gamma \iota_0 & 0 & \kappa_0^*\gamma \kappa_0 \end{pmatrix} + \begin{pmatrix}0 & -\iota_0^*C^*\iota_1 &0\\ \iota_1^*C \iota_0 & 0& 0 \\ 0 & 0 & 0 \end{pmatrix}\Big).
\end{equation}

\begin{lemma}\label{lem:gaminv} Assume \Cref{hyp:gammasus}. Let $\lambda \in \R\setminus\{0\}$. Then for all closed subspaces $V\subseteq H_0$, and corresponding embedding $\iota\colon V\hookrightarrow H_0$, the operator $\iu\lambda + \iota^*\gamma\iota$ is continuously invertible.
\end{lemma}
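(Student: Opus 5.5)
The plan is to prove a coercive a priori estimate for $T\coloneqq \iu\lambda+\iota^*\gamma\iota\in L(V)$ and the same estimate for its adjoint. Injectivity with closed range plus injectivity of the adjoint then gives bijectivity, and the bounded inverse theorem gives continuous invertibility. The point to watch is that $V$ is an arbitrary closed subspace, unrelated to the decomposition $U\oplus U^\bot$ from \Cref{hyp:gammasus}. So the block structure of $\gamma$ cannot be transported to $V$, and the argument has to use quadratic forms only.

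Fix $x\in V$ and put $y\coloneqq Tx$. Since $\scprod{\iota^*\gamma\iota x}{x}_V=\scprod{\gamma x}{x}_{H_0}$, writing $x=x_U+x_{U^\bot}$ and using \Cref{rem:observgamma}(a) gives
\[
\scprod{y}{x}=\iu\lambda\|x\|^2+\scprod{\gamma_U x_U}{x_U}+\scprod{\gamma_{U^\bot}x_{U^\bot}}{x_{U^\bot}}.
\]
The last term is real and nonnegative because $\gamma_{U^\bot}=\gamma_{U^\bot}^*\geq0$. Taking real parts yields $c\|x_U\|^2\leq \Re\scprod{y}{x}\leq\|y\|\|x\|$.

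Taking imaginary parts, the self-adjoint part contributes nothing. Hence
\[
|\lambda|\|x\|^2\leq |\Im\scprod{y}{x}|+\|\gamma_U\|\|x_U\|^2\leq \bigl(1+\|\gamma_U\|/c\bigr)\|y\|\|x\|.
\]
Therefore $\|x\|\leq |\lambda|^{-1}(1+\|\gamma_U\|/c)\|Tx\|$. This is the step where $\lambda\neq0$ is essential: the imaginary part controls the component of $x$ in $U^\bot$, which the possibly degenerate $\gamma_{U^\bot}$ does not. The estimate shows that $T$ is injective and has closed range.

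For surjectivity I use $\ran(T)^\bot=\ker(T^*)$, where $T^*=-\iu\lambda+\iota^*\gamma^*\iota$. By \Cref{rem:observgamma}(b), $\gamma^*$ satisfies \Cref{hyp:gammasus} with the same decomposition and the same constant $c$. Applying the estimate above with $-\lambda\neq0$ in place of $\lambda$ and $\gamma^*$ in place of $\gamma$ shows that $T^*$ is injective. Hence $\ran(T)$ is dense, and since it is closed, $\ran(T)=V$. Thus $T$ is bijective and bounded, and its inverse is bounded by the bounded inverse theorem, with the explicit bound $\|T^{-1}\|\leq|\lambda|^{-1}(1+\|\gamma_U\|/c)$.
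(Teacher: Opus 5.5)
Your proof is correct, but it is organised differently from the paper's. The paper handles $\lambda>0$ (and says $\lambda<0$ is analogous) by rotating. It picks $\alpha\in(-\pi/2,0)$ close to $0$ so that, by the block structure on $H_0$, $\Re\bigl(\e^{\iu\alpha}(\iu\lambda+\gamma)\bigr)\geq\tilde c>0$ on all of $H_0$. This strict accretivity passes to the compression $\iota^*\e^{\iu\alpha}(\iu\lambda+\gamma)\iota$, which is then invertible by a standard result on strictly accretive bounded operators, and the rotation is undone.

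You instead derive a coercive a priori estimate directly from the real and imaginary parts of $\scprod{Tx}{x}$. You then finish with closed range plus injectivity of $T^*$, obtained from \Cref{rem:observgamma}(b).

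Both routes rest on the same observation: the numerical range of $\iu\lambda+\gamma$ on $H_0$, and hence that of its compression to $V$, stays away from $0$. Both also correctly avoid transporting the decomposition $U\oplus U^\bot$ to $V$.

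The paper's rotation packs this into a single accretivity statement. The adjoint is then handled automatically, since $\Re T^*=\Re T$, so the argument is shorter given the cited invertibility result. Your version is self-contained and treats both signs of $\lambda$ at once. It also gives the explicit bound $\|(\iu\lambda+\iota^*\gamma\iota)^{-1}\|\leq|\lambda|^{-1}(1+\|\gamma_U\|/c)$, which the rotation argument, with its implicit $\tilde c$, does not provide.
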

\begin{proof}
We start off with a general observation. Let $H$ be a Hilbert space and $\eta\in L(H)$. Then, for all $\lambda\in \R$ and $\alpha\in \R$ we obtain
 \[
     \Re   (\e^{\iu \alpha}(\iu \lambda+\eta))=-\lambda \sin \alpha + \cos \alpha \Re \eta - \sin{\alpha}\Im \eta = \cos\alpha\Re\eta - \sin\alpha(\lambda+ \Im \eta), 
 \]
  where we recall $2\Re\eta=\eta+\eta^*$ and $2\iu\Im \eta=\eta-\eta^*$. 
 
 Turning to our task, we treat the case $V=H_0$ first. Then $\iu\lambda+\gamma$ is continuously invertible by \Cref{rem:observgamma} (c). Moreover, we may write
 \[
    \iu\lambda+\gamma=  \begin{pmatrix}
   \iu\lambda +           {\gamma}_U & 0 \\ 0 & \iu \lambda+{\gamma}_{U^\bot} 
     \end{pmatrix}
 \]
 for some $ {\gamma}_U\in L(U)$  and $0\leq  {\gamma}_{U^\bot}^*= {\gamma}_{U^\bot}\in L(U^\bot)$ such that $\Re{\gamma}_U\geq c$ for some $c>0$. Next, for the general situation, let $V\subseteq H_0$ be a closed subspace and let us restrict to the case $\lambda>0$ ($\lambda<0$ will follows similar lines). Then, by our preliminary observation, we may choose $\alpha \in (-\pi/2,0)$ and so close to $0$ so that 
 \begin{align*}
 \Re   (\e^{\iu \alpha}(\iu \lambda+ {\gamma}_U ))& \geq (\cos \alpha) c -|\sin\alpha| (\lambda +\|\Im{\gamma}_U \|)>0, \text{ and }\\
 \Re   (\e^{\iu \alpha}(\iu \lambda+ {\gamma}_{U^\bot} )) &\geq (\cos \alpha) {\gamma}_{U^\bot} -(\sin \alpha) \lambda\geq  -(\sin \alpha) \lambda>0.
 \end{align*} Thus, we find $\tilde{c}>0$ such that
 \[
    \Re \e^{\iu \alpha} (\gamma+\iu \lambda)\geq \tilde{c}.
 \]
 It follows that $\Re \iota^*\e^{\iu \alpha} (\gamma+\iu \lambda)\iota\geq \tilde{c}$ so that $ \iota^*\e^{\iu \alpha} (\gamma+\iu \lambda)\iota$ is continuously invertible, see, e.g., \cite[Proposition 6.2.3(b)]{STW22}. Thus, also $ \iota^* (\gamma+\iu \lambda)\iota = \e^{-\iu \alpha}\iota^*\e^{\iu \alpha} (\gamma+\iu \lambda)\iota$ is continuously invertible.
\end{proof}

We require an elementary lemma about closed ranges:

\begin{lemma}\label{lem:SAT}
Let $H$ be a Hilbert space, $A\colon \dom(A)\subseteq H\to H$ closed, $T,S\in L(H)$ be topological isomorphisms. Then
\[
  \ran(A)\subseteq H\text{ closed }\iff \ran(SAT)\subseteq H\text{ closed}.
\]
\end{lemma}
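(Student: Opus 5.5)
The plan is to verify the identity of ranges directly. Here $SAT$ is understood with its natural domain $\dom(SAT)=T^{-1}[\dom(A)]$. First I would record that
\[
\ran(SAT)=S[\ran(A)].
\]
For the inclusion "$\subseteq$", take $x\in\dom(SAT)$; then $Tx\in\dom(A)$ and $SATx=S(A(Tx))\in S[\ran(A)]$. For the inclusion "$\supseteq$", take $y=Ax'$ with $x'\in\dom(A)$ and put $x\coloneqq T^{-1}x'$. Then $x\in\dom(SAT)$ and $SATx=Sy$. Bijectivity of $T$ is exactly what makes the domains match up in this step.

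Next I would use that $S$ is a topological isomorphism, hence a homeomorphism of $H$ onto itself. Homeomorphisms map closed sets to closed sets, and the same holds for $S^{-1}$. So $\ran(A)$ is closed if and only if $S[\ran(A)]=\ran(SAT)$ is closed. This gives both directions at once. Alternatively, one can apply the forward implication to $S^{-1}(SAT)T^{-1}=A$, noting that $SAT$ is again closed since $S,T$ are bounded with bounded inverses.

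Closedness of $A$ is not really needed for the equivalence. It only ensures that $SAT$ is a closed operator, so that both sides live in the same class, as required for the later applications to $B(\lambda)$.

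I do not expect a genuine obstacle. The only point requiring care is fixing the domain convention for the product $SAT$ so that the range identity holds exactly, and not merely up to closure.
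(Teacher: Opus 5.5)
Your proof is correct and is essentially the paper's argument. The paper runs the same idea with sequences: it pulls a convergent sequence in $\ran(SAT)$ back via $S^{-1}$, uses closedness of $\ran(A)$, and uses bijectivity of $T$ to return to $\dom(SAT)$, reducing to one direction by passing to inverses. You package this as the identity $\ran(SAT)=S[\ran(A)]$ together with the fact that $S$ is a homeomorphism, which gives both directions at once.
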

\begin{proof}
Since $S^{-1}$ and $T^{-1}$ are topological isomorphisms, it suffices to show $\ran(SAT)$ is closed as long as $A$ is. For this, let $(y_n)_n\coloneqq (SATx_n)_n$ be a sequence in $\ran(SAT)$ convergent to some $y\in H$ for some $(x_n)_n$ in $\dom(SAT)$. Then $(S^{-1}y_n)_n \to S^{-1}y$ as $S^{-1}$ is continuous. Since $S^{-1}y_n = ATx_n \in \ran(A)$ and $\ran(A)\subseteq H$ is closed, we find $z \in \dom(A)$ such that $S^{-1} y = Az$. Thus, $y = SAz=SATT^{-1}z$ so that $T^{-1}z\in \dom(SAT)$ and $y\in \ran(SAT)$.
\end{proof}

In the proof of the next result we revisit a similarity transformation used in \cite{Wa26}.

\begin{theorem}\label{thm:blcrr} Assume \Cref{hyp:fne0p}, \Cref{hyp:gammasus} and that $\dom(C)\cap \ker(C)^\bot \hookrightarrow H_0$ is compact. Let $\lambda \in \R\setminus\{0\}$ and $B(\lambda)$ as \cref{eq:bl}. Then $B(\lambda)$ has closed range.
\end{theorem}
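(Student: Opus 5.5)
The plan is to eliminate the $\ker(C)$-component of $B(\lambda)$ by a Schur complement and then treat the remaining $2$-by-$2$ operator as a compact perturbation of the identity, up to an invertible factor. Fix $\lambda\in\R\setminus\{0\}$ and write $C_0\coloneqq \iota_1^*C\iota_0\colon \dom(C)\cap\ran(C^*)\subseteq\ran(C^*)\to\ran(C)$. By \Cref{rem:clr}(b), $C_0$ is continuously invertible, and its adjoint is $C_0^*=\iota_0^*C^*\iota_1$. Put $D\coloneqq \iu\lambda+\kappa_0^*\gamma\kappa_0\in L(\ker(C))$. By \Cref{lem:gaminv} with $V=\ker(C)$, $D$ is continuously invertible.

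\emph{Step 1 (factorisation).} The entries $\iota_0^*\gamma\kappa_0$ and $\kappa_0^*\gamma\iota_0$ that couple the third component to the others are bounded. Hence one can write
\[
B(\lambda)=L\begin{pmatrix} R & 0\\ 0 & D\end{pmatrix} U,
\]
with the following ingredients.
\begin{itemize}
\item $L$ is the block lower triangular operator with identity diagonal and off-diagonal entry $(\kappa_0^*\gamma\iota_0,0)D^{-1}$; $U$ is the block upper triangular operator with identity diagonal and off-diagonal entry $D^{-1}(\kappa_0^*\gamma\iota_0,0)$, up to the correct ordering of factors. Both are bounded, with bounded inverses (flip the sign of the off-diagonal entry). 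Moreover $U$ maps $\dom(B(\lambda))$ onto itself, since only the bounded third component is altered.
\item $R$ is the operator on $\ran(C^*)\times\ran(C)$ given by
\[
R\coloneqq \iu\lambda+\begin{pmatrix} P&0\\0&0\end{pmatrix}+K,\qquad P\coloneqq \iota_0^*\gamma\iota_0-\iota_0^*\gamma\kappa_0D^{-1}\kappa_0^*\gamma\iota_0,\qquad K\coloneqq \begin{pmatrix}0&-C_0^*\\ C_0&0\end{pmatrix}.
\]
\end{itemize}
By \Cref{lem:SAT}, $B(\lambda)$ has closed range if and only if $\operatorname{diag}(R,D)$ does. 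Since $D$ is an isomorphism, this holds if and only if $R$ has closed range.

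\emph{Step 2 (compactness of $K^{-1}$).} $K$ is invertible with
\[
K^{-1}=\begin{pmatrix}0&C_0^{-1}\\-(C_0^*)^{-1}&0\end{pmatrix}.
\]
The operator $C_0^{-1}\colon\ran(C)\to\dom(C)\cap\ker(C)^\bot$ is bounded into the graph norm. The embedding $\dom(C)\cap\ker(C)^\bot\hookrightarrow H_0$ is compact by assumption, and $\ker(C)^\bot=\ran(C^*)$. Hence $C_0^{-1}$ is compact as an operator into $\ran(C^*)$. Its adjoint $(C_0^{-1})^*=(C_0^*)^{-1}$ is then compact too, so $K^{-1}$ is compact.

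\emph{Step 3 (Fredholm argument).} Since $\dom(R)=\dom(K)$ and $K^{-1}$ maps onto $\dom(K)$, we can write
\[
R=K\bigl(1+K^{-1}(\iu\lambda+\operatorname{diag}(P,0))\bigr).
\]
Here $K^{-1}(\iu\lambda+\operatorname{diag}(P,0))$ is compact, as a compact operator composed with a bounded one. Thus $1+K^{-1}(\dots)$ is Fredholm, and in particular has closed range. \Cref{lem:SAT} with $S=K$ cannot be applied verbatim, because $K$ is unbounded. Instead we argue directly. If $Rx_n\to y$, then $K^{-1}Rx_n\to K^{-1}y$, and by closedness of the range of $1+K^{-1}(\cdots)$ we get $K^{-1}y=(1+K^{-1}(\cdots))z$ for some $z$. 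This $z$ lies in $\dom(K)$, because $z=K^{-1}y-K^{-1}(\cdots)z$, and then $y=Rz$. So $R$, and hence $B(\lambda)$, has closed range.

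The step I expect to need most care is Step 1: the domain bookkeeping showing that $L$, $U$ are isomorphisms compatible with $\dom(B(\lambda))$, so that \Cref{lem:SAT} genuinely applies. The compactness of $K^{-1}$ in Step 2 is where the hypothesis on $\dom(C)\cap\ker(C)^\bot$ enters, and \Cref{lem:gaminv} is exactly what makes the Schur complement available for $\lambda\neq0$.
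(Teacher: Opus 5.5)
Your proposal is correct and is essentially the paper's proof: the paper's $T_1(\lambda)$ and $T_2(\lambda)$ are exactly the inverses of your triangular factors, so $T_1(\lambda)B(\lambda)T_2(\lambda)=\operatorname{diag}(R,D)$ via \Cref{lem:gaminv} with $V=\ker(C)$ and \Cref{lem:SAT}; the paper then cites relative compactness with respect to the compact-resolvent block $K$, which your Steps 2 and 3 simply carry out by hand. One slip to fix in Step 1: the off-diagonal entry of $L$ must be $(\iota_0^*\gamma\kappa_0,0)^{\top}D^{-1}$, mapping $\ker(C)$ into $\ran(C^*)\times\ran(C)$, not $(\kappa_0^*\gamma\iota_0,0)D^{-1}$, which does not compose; with the ordering $(\ran(C^*)\times\ran(C))\oplus\ker(C)$ the left factor is upper triangular and the right factor lower triangular.
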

\begin{proof} We introduce
\[
T(\lambda)\coloneqq \iu\lambda \begin{pmatrix}1 & 0&0\\ 0 & 1& 0 \\ 0 & 0 & 1 \end{pmatrix}+
\begin{pmatrix} \iota_0^*\gamma \iota_0 & 0&\iota_0^*\gamma \kappa_0\\ 0 & 0& 0 \\ \kappa_0^*\gamma \iota_0 & 0 & \kappa_0^*\gamma \kappa_0 \end{pmatrix}.
\]
By \Cref{lem:gaminv}, we get that $\iu\lambda+\kappa_0^*\gamma\kappa_0$ is continuously invertible. We compute for 
\begin{align*}
  T_1(\lambda)& \coloneqq 
\begin{pmatrix}1 & 0&-\iota_0^*\gamma\kappa_0 (\iu\lambda+\kappa_0^*\gamma\kappa_0)^{-1}\\ 0 & 1& 0 \\ 0 & 0 & 1 \end{pmatrix}   \text{ and }\\ T_2(\lambda)&\coloneqq  \begin{pmatrix}1 & 0&0 \\ 0 & 1& 0 \\ -(\iu\lambda+\kappa_0^*\gamma\kappa_0)^{-1}\kappa_0^*\gamma\iota_0  & 0 & 1 \end{pmatrix}
\end{align*}
that
\begin{align*}
T_1(\lambda) T(\lambda) T_2(\lambda)  = \iu\lambda \begin{pmatrix}1 & 0&0\\ 0 & 1& 0 \\ 0 & 0 & 1 \end{pmatrix}+
\begin{pmatrix} \iota_0^*\gamma \iota_0-\iota_0^*\gamma\kappa_0 (\iu\lambda+\kappa_0^*\gamma\kappa_0)^{-1} \kappa_0^*\gamma \iota_0 & 0&0\\ 0 & 0& 0 \\ 0 & 0 & \kappa_0^*\gamma \kappa_0 \end{pmatrix}.
\end{align*}
Moreover, we obtain that 
\[T_1(\lambda)  \begin{pmatrix}0 & -\iota_0^*C^*\iota_1 &0\\ \iota_1^*C \iota_0 & 0& 0 \\ 0 & 0 & 0 \end{pmatrix}  T_2(\lambda) = \begin{pmatrix}0 & -\iota_0^*C^*\iota_1 &0\\ \iota_1^*C \iota_0 & 0& 0 \\ 0 & 0 & 0 \end{pmatrix}\]
Thus, as $T_1(\lambda), T_2(\lambda)$ are topological isomorphisms, it follows from \Cref{lem:SAT} that $B(\lambda)$ has closed range if and only if
\begin{multline*}
   T_1(\lambda)B(\lambda)T_2(\lambda)  \\
  = \iu\lambda \begin{pmatrix}1 & 0&0\\ 0 & 1& 0 \\ 0 & 0 & 1 \end{pmatrix}+
\begin{pmatrix} \iota_0^*\gamma \iota_0-\iota_0^*\gamma\kappa_0 (\iu\lambda+\kappa_0^*\gamma\kappa_0)^{-1} \kappa_0^*\gamma \iota_0 & 0&0\\ 0 & 0& 0 \\ 0 & 0 & \kappa_0^*\gamma \kappa_0 \end{pmatrix} \\ +\begin{pmatrix}0 & -\iota_0^*C^*\iota_1 &0\\ \iota_1^*C \iota_0 & 0& 0 \\ 0 & 0 & 0 \end{pmatrix}
\end{multline*}
has closed range. Note that the operator
\[
   \iu\lambda \begin{pmatrix}1 & 0\\ 0 & 1\end{pmatrix}+
\begin{pmatrix} \iota_0^*\gamma \iota_0-\iota_0^*\gamma\kappa_0 (\iu\lambda+\kappa_0^*\gamma\kappa_0)^{-1} \kappa_0^*\gamma \iota_0 & 0\\ 0 & 0\end{pmatrix}
\]
is bounded and, hence, relatively compact with respect to
\[
  \begin{pmatrix}0 & -\iota_0^*C^*\iota_1\\ \iota_1^*C \iota_0 & 0 \end{pmatrix}
\]as the latter operator has compact resolvent. Thus,
\[
r_0 \coloneqq \ran\big( \iu\lambda \begin{pmatrix}1 & 0\\ 0 & 1\end{pmatrix}+
\begin{pmatrix} \iota_0^*\gamma \iota_0-\iota_0^*\gamma\kappa_0 (\iu\lambda+\kappa_0^*\gamma\kappa_0)^{-1} \kappa_0^*\gamma \iota_0 & 0\\ 0 & 0\end{pmatrix}+  \begin{pmatrix}0 & -\iota_0^*C^*\iota_1\\ \iota_1^*C \iota_0 & 0 \end{pmatrix}
\big)
\]
is closed by a standard result of Fredholm theory. Since $\iu\lambda+\kappa_0^*\gamma\kappa_0$ is onto by \Cref{lem:gaminv}, we obtain
\[
   \ran(T_1(\lambda)B(\lambda)T_2(\lambda))=r_0\oplus \ker(C)
\]
is closed, which yields the claim using \Cref{lem:SAT}.
\end{proof}

For $\lambda=0$, the reasoning is different.

\begin{theorem}\label{thm:clr0} Assume \Cref{hyp:fne0p}.  Let $B(0)$ as given in \cref{eq:bl}. Then
\[
B(0)\text{ has closed range} \iff \kappa_0^*\gamma \kappa_0 \text{ has closed range}.
\]
\end{theorem}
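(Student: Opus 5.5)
The plan is to avoid any Fredholm argument, since we only assume \Cref{hyp:fne0p}, and instead compute $\ran(B(0))$ explicitly by block elimination. By \Cref{rem:clr}(b), the operator $D\coloneqq \iota_1^*C\iota_0\colon \dom(C)\cap\ran(C^*)\subseteq\ran(C^*)\to\ran(C)$ is continuously invertible. Likewise its adjoint $D^*=\iota_0^*C^*\iota_1$ is continuously invertible. Writing $B(0)(x,y,z)=(f,g,h)$ with $x\in\dom(D)$, $y\in\dom(D^*)$, $z\in\ker(C)$, the three rows read
\[
\iota_0^*\gamma\iota_0x-D^*y+\iota_0^*\gamma\kappa_0z=f,\quad Dx=g,\quad \kappa_0^*\gamma\iota_0x+\kappa_0^*\gamma\kappa_0z=h.
\]

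First I will show the characterisation
\[
\ran(B(0))=\dset[\big]{(f,g,h)\in\ran(C^*)\times\ran(C)\times\ker(C)}{h-\kappa_0^*\gamma\iota_0D^{-1}g\in\ran(\kappa_0^*\gamma\kappa_0)}.
\]
For the inclusion ``$\subseteq$'', the second row forces $x=D^{-1}g$, and substituting into the third row gives the membership condition.

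For ``$\supseteq$'', take $(f,g,h)$ satisfying the condition. Set $x\coloneqq D^{-1}g\in\dom(D)$ and pick $z$ with $\kappa_0^*\gamma\kappa_0z=h-\kappa_0^*\gamma\iota_0x$. Then put
\[
y\coloneqq (D^*)^{-1}\big(\iota_0^*\gamma\iota_0x+\iota_0^*\gamma\kappa_0z-f\big)\in\dom(D^*).
\]
This is well-defined because the argument lies in $\ran(C^*)$ and $D^*$ is onto. Together these choices solve all three rows. The only care needed here is domain bookkeeping, ensuring $(x,y,z)$ lies in $\dom(B(0))$; this is guaranteed since $D^{-1}$ and $(D^*)^{-1}$ map into the respective domains. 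This is the step I would check most carefully, but it is routine.

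Next, I introduce the bounded map $\Phi(f,g,h)\coloneqq (f,g,h-\kappa_0^*\gamma\iota_0D^{-1}g)$. It is a topological isomorphism of $\ran(C^*)\times\ran(C)\times\ker(C)$, being unipotent lower triangular with bounded off-diagonal entry, and its inverse simply adds the term back. By the characterisation,
\[
\Phi[\ran(B(0))]=\ran(C^*)\times\ran(C)\times\ran(\kappa_0^*\gamma\kappa_0).
\]
Since $\Phi$ is a homeomorphism, $\ran(B(0))$ is closed if and only if this product is closed. The product is closed exactly when $\ran(\kappa_0^*\gamma\kappa_0)\subseteq\ker(C)$ is closed, because the first two factors are the full (closed) spaces. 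This yields both directions of the equivalence. Alternatively, one can phrase the argument through \Cref{lem:SAT} with $S=\Phi$ and $T=\mathrm{id}$.
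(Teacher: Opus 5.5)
Your argument is correct and is essentially the paper's Schur-complement proof. Take $a$ to be the upper-left $2\times 2$ block of $B(0)$. Then $ca^{-1}=(0,\ \kappa_0^*\gamma\iota_0 D^{-1})$, so your $\Phi$ is exactly the left elimination factor in \Cref{lem:block}(b), and your explicit choice of $(x,y,z)$, which uses that $D^*$ is onto to absorb the coupling term $\iota_0^*\gamma\kappa_0 z$, replaces the paper's right factor and its observation $(a^{-1})_{11}=0$ (so $ca^{-1}b=0$). The paper packages this through \Cref{lem:block}(a),(b), while you write out $\ran(B(0))$ directly and need only the one-sided isomorphism $\Phi$.
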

Note that the reformulation using the square roots of $\alpha$ and $\beta$ employed in the previous section does not influence the closed range characterisation. This is confirmed next.
\begin{proposition}\label{prop:alpha1} Let $0<c\leq \alpha=\alpha^*,\gamma \in L(H_0)$, $0<c\leq  \beta=\beta^*\in L(H_1)$ for some $c>0$. Then
 $\kappa_0^*\gamma \kappa_0$ has closed range if and only if
\[
{\tilde{\kappa}_0}^*\sqrt{\alpha}^{-1}\gamma\sqrt{\alpha}^{-1} \tilde{\kappa}_0
\]has closed range, where $\tilde{\kappa}_0\colon \ker(\sqrt{\beta}^{-1}C\sqrt{\alpha}^{-1})\hookrightarrow H_0$ is the canonical embedding.
\end{proposition}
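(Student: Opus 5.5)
The plan is to identify the kernel of the transformed operator explicitly. Then the two compressions become unitarily equivalent up to topological isomorphisms, and closedness of range transfers as in \Cref{lem:SAT}.

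\emph{Step 1: the kernel.} Put $K\coloneqq\ker(C)$ and $\tilde K\coloneqq \ker(\sqrt{\beta}^{-1}C\sqrt{\alpha}^{-1})$. Since $\sqrt{\beta}^{-1}$ is injective, $x\in\tilde K$ if and only if $\sqrt{\alpha}^{-1}x\in\dom(C)$ and $C\sqrt{\alpha}^{-1}x=0$. This is equivalent to $\sqrt{\alpha}^{-1}x\in K$, so $\tilde K=\sqrt{\alpha}[K]$. Both spaces are closed subspaces of $H_0$.

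\emph{Step 2: factorisation.} Define $S\colon \tilde K\to K$, $Sx\coloneqq \sqrt{\alpha}^{-1}x$. By Step 1 this is a bijective bounded operator between Hilbert spaces, hence a topological isomorphism by the open mapping theorem. By construction $\sqrt{\alpha}^{-1}\tilde\kappa_0=\kappa_0 S$. Since $\sqrt{\alpha}^{-1}$ is self-adjoint, $\tilde\kappa_0^*\sqrt{\alpha}^{-1}=(\sqrt{\alpha}^{-1}\tilde\kappa_0)^*=(\kappa_0S)^*=S^*\kappa_0^*$. Therefore
\[
\tilde\kappa_0^*\sqrt{\alpha}^{-1}\gamma\sqrt{\alpha}^{-1}\tilde\kappa_0 = S^*\,(\kappa_0^*\gamma\kappa_0)\,S ,
\]
where $S^*\colon K\to\tilde K$ is again a topological isomorphism.

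\emph{Step 3: closed range.} \Cref{lem:SAT} is stated with $S,T$ acting on a single space $H$, whereas here $S$ maps $\tilde K\to K$ and $S^*$ maps $K\to\tilde K$. Its proof carries over verbatim to isomorphisms between different Hilbert spaces, and here all operators are even bounded. Thus $\ran(S^*\kappa_0^*\gamma\kappa_0S)=S^*[\ran(\kappa_0^*\gamma\kappa_0)]$ is closed if and only if $\ran(\kappa_0^*\gamma\kappa_0)$ is closed. This gives the equivalence.

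The only point needing care is Step 1: one must confirm that the domain condition for the product operator is exactly $\sqrt{\alpha}^{-1}x\in\dom(C)$. I do not expect any real obstacle beyond noting this and the adaptation of \Cref{lem:SAT} to different spaces.
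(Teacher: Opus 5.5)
Your proof is correct, and it takes a somewhat different route from the paper.

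The paper proves only one implication and says the other suffices by symmetry. Implicitly, that means reapplying the statement with $\alpha^{-1},\beta^{-1}$, $\sqrt{\beta}^{-1}C\sqrt{\alpha}^{-1}$ and $\sqrt{\alpha}^{-1}\gamma\sqrt{\alpha}^{-1}$ in place of $\alpha,\beta,C,\gamma$. For that one direction it runs a hands-on sequence argument:
\begin{enumerate}
\item it takes $y_n=\tilde{\kappa}_0^*\sqrt{\alpha}^{-1}\gamma\sqrt{\alpha}^{-1}\tilde{\kappa}_0x_n\to y$;
\item it uses $\langle y_n,z\rangle=\langle\gamma\kappa_0\sqrt{\alpha}^{-1}x_n,\kappa_0\sqrt{\alpha}^{-1}z\rangle$ for $z\in\tilde K$ to get weak convergence of $\kappa_0^*\gamma\kappa_0\sqrt{\alpha}^{-1}x_n$ to $\kappa_0^*\sqrt{\alpha}y$;
\item it invokes weak closedness of the closed range $\ran(\kappa_0^*\gamma\kappa_0)$ to find a preimage, and checks that $\sqrt{\alpha}$ applied to this preimage is mapped to $y$.
\end{enumerate}

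That tested identity is exactly the weak form of your factorisation $\tilde\kappa_0^*\sqrt{\alpha}^{-1}\gamma\sqrt{\alpha}^{-1}\tilde\kappa_0=S^*(\kappa_0^*\gamma\kappa_0)S$. Indeed $(S^*)^{-1}=(S^{-1})^*=\kappa_0^*\sqrt{\alpha}\tilde\kappa_0$, so the paper's sequence satisfies $\kappa_0^*\gamma\kappa_0Sx_n=(S^*)^{-1}y_n$ exactly, and the convergence is even in norm.

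Packaging the relation as an operator identity with the isomorphisms $S$ and $S^*$ has three advantages: it yields both implications at once, makes the unstated symmetry argument unnecessary, and avoids the detour through weak convergence. The paper's version, in turn, avoids setting up adjoints between the different subspaces $\ker(C)$ and $\tilde K$ and adapting \Cref{lem:SAT} to operators between different spaces.
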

\begin{proof} It suffices to show one implication. Thus, assume $\kappa_0^*\gamma \kappa_0$ has closed range.
Note $x\in \ker(\sqrt{\beta}^{-1}C\sqrt{\alpha}^{-1})$ if and only if $\sqrt{\alpha}^{-1}x \in \ker(C)$. Let $y_n = {\tilde{\kappa}_0}^*\sqrt{\alpha}^{-1}\gamma \sqrt{\alpha}^{-1}{\tilde{\kappa}_0}x_n\to y$. Then $(\sqrt{\alpha}^{-1}{\tilde{\kappa}_0}x_n)_n$ is a sequence in $\ker(C)$. Hence, for $z\in \ker(\sqrt{\beta}^{-1}C\sqrt{\alpha}^{-1})$, we deduce
 \[
\langle \sqrt{\alpha}y_n, \sqrt{\alpha}^{-1}z\rangle= \langle y_n,z\rangle =   \langle {\tilde{\kappa}_0}^*{\sqrt{\alpha}^{-1}}\gamma \sqrt{\alpha}^{-1}{\tilde{\kappa}_0}x_n, z\rangle = \langle \gamma \kappa_0\sqrt{\alpha}^{-1}x_n, \kappa_0\sqrt{\alpha}^{-1}z\rangle.
 \]
 Hence, $\kappa_0^*\gamma \kappa_0\sqrt{\alpha}^{-1}x_n$ weakly converges to some $\kappa_0^*\sqrt{\alpha}y$, and as the range of $\kappa_0^*\gamma \kappa_0$ is closed, it is weakly closed and thus we find $\tilde{x} \in \ker(C)$ with $x\coloneqq \sqrt{\alpha} \tilde{x}$ such that $\kappa_0^*\gamma \kappa_0 \sqrt{\alpha}^{-1}x=\kappa_0^*\sqrt{\alpha}y$. In particular, for all $z\in  \ker(\sqrt{\beta}^{-1}C\sqrt{\alpha}^{-1})$, we get
 \begin{multline*}
\langle y,z\rangle =\langle \kappa_0^*\sqrt{\alpha}y,\sqrt{\alpha}^{-1} z\rangle =   \langle\kappa_0^*\gamma \kappa_0 \sqrt{\alpha}^{-1}x, \sqrt{\alpha}^{-1} z\rangle \\ = \langle\gamma \sqrt{\alpha}^{-1}x, \sqrt{\alpha}^{-1} z\rangle =\langle\sqrt{\alpha}^{-1}\gamma \sqrt{\alpha}^{-1}x,  z\rangle.
 \end{multline*}
 As $x\in \ker(C\sqrt{\alpha}^{-1})=\ker(\sqrt{\beta}^{-1}C\sqrt{\alpha}^{-1})$ it follows that
 \[
   y = {\tilde{\kappa}_0}^*\sqrt{\alpha}^{-1}\gamma\sqrt{\alpha}^{-1} \tilde{\kappa}_0 x.\qedhere
 \]
\end{proof}

\begin{lemma}\label{lem:block} Let $H_0,H_1$ be Hilbert spaces.

(a) Let $A\in L(H_0)$ and let $B\colon \dom(B)\subseteq H_1 \to H_0$ as well as $C\colon \dom(C)\subseteq H_0\to H_1$ be both continuously invertible. Then $ \begin{pmatrix}
    A & B \\ C & 0
   \end{pmatrix} $
on its natural domain $\dom(C)\times \dom(B)$ as an operator in $H_0\times H_1$ is continuously invertible and we have
\[
   \begin{pmatrix}
    A & B \\ C & 0
   \end{pmatrix} ^{-1} = 
   \begin{pmatrix}
    0 & C^{-1} \\ B^{-1} & -B^{-1}AC^{-1}
   \end{pmatrix} 
\] 
(b) Let $a\colon \dom(H_0)\subseteq H_0\to H_0$ continuously invertible,  $b\in L( H_1,H_0)$, $c\in L( H_0,H_1)$, $d\in L(H_1)$. Then the following conditions are equivalent:
\begin{enumerate}
\item[(i)]$
      \begin{pmatrix}
    a & b \\ c & d
   \end{pmatrix}$ has closed range;
   \item[(ii)] $d-ca^{-1}b$ has closed range.
\end{enumerate}
\end{lemma}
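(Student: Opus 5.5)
For (a), I will verify the inverse formula by direct multiplication and then check that it respects domains. Set
\[
R\coloneqq \begin{pmatrix} 0 & C^{-1} \\ B^{-1} & -B^{-1}AC^{-1}\end{pmatrix}.
\]
Each entry is bounded, since $C^{-1}$ and $B^{-1}$ are bounded and $A$ is bounded, so $R\in L(H_0\times H_1)$.

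For $(f,g)\in H_0\times H_1$, the first component of $R(f,g)$ is $C^{-1}g\in \dom(C)$. The second component is $B^{-1}(f-AC^{-1}g)\in\dom(B)$. Hence $R$ maps into the natural domain $\dom(C)\times\dom(B)$. Applying the block operator gives
\[
\bigl(AC^{-1}g+f-AC^{-1}g,\; CC^{-1}g\bigr)=(f,g),
\]
so $R$ is a right inverse.

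For the left inverse, take $(x,y)\in\dom(C)\times\dom(B)$. Then
\[
R\bigl(Ax+By,\,Cx\bigr)=\bigl(x,\;B^{-1}(Ax+By)-B^{-1}Ax\bigr)=(x,y).
\]
This identity also proves injectivity. Therefore the block operator is bijective with bounded inverse $R$; closedness of the block operator follows automatically.

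For (b), I will use the Schur complement factorisation
\[
\begin{pmatrix} a & b\\ c & d\end{pmatrix}=\begin{pmatrix}1&0\\ ca^{-1}&1\end{pmatrix}\begin{pmatrix} a&0\\0&d-ca^{-1}b\end{pmatrix}\begin{pmatrix}1& a^{-1}b\\0&1\end{pmatrix},
\]
on the natural domain $\dom(a)\times H_1$. The outer factors are bounded with bounded inverses, since $ca^{-1}$ and $a^{-1}b$ are bounded.

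The one point to check is that the right factor maps $\dom(a)\times H_1$ bijectively onto itself. This holds because $a^{-1}b$ maps into $\dom(a)$: $(x,y)\mapsto(x+a^{-1}by,y)$ preserves $\dom(a)\times H_1$, and so does its inverse $(x,y)\mapsto (x-a^{-1}by,y)$. The domains of the two sides of the factorisation therefore agree.

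I will then apply \Cref{lem:SAT}. I expect this to be the main point needing care. That lemma asks for closedness of the middle operator and for $S,T$ to be isomorphisms of the whole space. Its proof, however, only uses that $T$ maps the domain of $SAT$ bijectively onto the domain of the middle operator, which holds here. Closedness of the middle operator is not actually needed in that argument either. Alternatively, one can observe that $\diag(a,\,d-ca^{-1}b)$ is closed, because $a$ is closed (it is continuously invertible) and $d-ca^{-1}b$ is bounded.

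With this, $\ran\begin{pmatrix}a&b\\c&d\end{pmatrix}$ is closed if and only if $\ran\diag(a,d-ca^{-1}b)=H_0\times\ran(d-ca^{-1}b)$ is closed. Here we use that $a$ is onto. The latter set is closed if and only if $\ran(d-ca^{-1}b)$ is closed, which proves (i)$\iff$(ii).
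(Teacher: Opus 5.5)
Your proof is correct and follows the paper's argument: (a) is the direct verification the paper leaves to the reader, and (b) is the same Schur-complement factorisation combined with \Cref{lem:SAT} and surjectivity of $a$ (you write the block operator as $S\,\diag(a,d-ca^{-1}b)\,T$, while the paper conjugates by the inverses of your outer factors). Your extra check that $a^{-1}b$ maps into $\dom(a)$, so that the composition has exactly the natural domain $\dom(a)\times H_1$, is a detail the paper leaves implicit.
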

\begin{proof}
(a) The statement follows by direct computation.

(b) The operators 
\[
  \begin{pmatrix}
    1 & 0 \\ -ca^{-1} & 1
   \end{pmatrix}\text{ and }  \begin{pmatrix}
    1 & -a^{-1}b \\ 0 & 1
   \end{pmatrix}
\]are topological isomorphisms. Hence, $\begin{pmatrix}
    a& b \\ c & d
   \end{pmatrix}$ has closed range if and only if 
   \[
  \begin{pmatrix}
    1 & 0 \\ -ca^{-1} & 1
   \end{pmatrix} \begin{pmatrix}
    a & b \\ c & d
   \end{pmatrix}\begin{pmatrix}
    1 & -a^{-1}b \\ 0 & 1
   \end{pmatrix} = \begin{pmatrix}
    a & 0 \\ 0 & d-ca^{-1}b
   \end{pmatrix}
   \]
   has closed range; as $a$ is continuously invertible, it is onto. Hence, proving (b).
\end{proof}
\begin{proof}[Proof of \Cref{thm:clr0}] First, we apply \Cref{lem:block} (b) to the setting 
\[a= \begin{pmatrix} \iota_0^*\gamma \iota_0 & 0\\ 0 & 0  \end{pmatrix} + \begin{pmatrix}0 & -\iota_0^*C^*\iota_1 \\ \iota_1^*C \iota_0& 0  \end{pmatrix},  b= \begin{pmatrix} \iota_0^* \gamma\kappa_0 \\0\end{pmatrix},  c = \begin{pmatrix} \kappa_0^*\gamma\iota_0 & 0\end{pmatrix}, d = \kappa_0^* \gamma \kappa_0. \] 
Now, consider
\[
 a=  \begin{pmatrix} \iota_0^*\gamma \iota_0 & 0\\ 0 & 0  \end{pmatrix} + \begin{pmatrix}0 & -\iota_0^*C^*\iota_1 \\ \iota_1^*C \iota_0 & 0  \end{pmatrix}.
\]The off diagonal entries of $a$ are continuously invertible and thus $a$ is continuously invertible by \Cref{lem:block} (a). Hence, $B(0)$ has closed range if and only if
\[
d-ca^{-1}b \text{ has closed range.}
\]
Now, by \Cref{lem:block} (a), the top left block operator entry of $a^{-1}$, $(a^{-1})_{11}$,  is $0$. Hence,
\[
   c a^{-1} b = \begin{pmatrix} \kappa_0^*\gamma\iota_0 & 0\end{pmatrix} a^{-1} \begin{pmatrix} \iota_0^* \gamma\kappa_0 \\0\end{pmatrix} = \kappa_0^*\gamma\iota_0 (a^{-1})_{11} \iota_0^* \gamma\kappa_0 =0.
\]
Hence, $B(0)$ has closed range if and only if $d$ has.
\end{proof}

The closedness of the range of the operator $\kappa_0^* \gamma \kappa_0$ appears to be a subject matter requiring particular consideration. If $\gamma$ is strictly positive definite, the claim is plain. For the partially damped situation, there cannot be an abstract argument---even if $\gamma$ was an orthogonal projection, clearly satisfying \Cref{hyp:gammasus}---as the next example shows.

\begin{example}
Let $H_0$ be a Hilbert space and $T\in L(H_0)$ without closed range (as a consequence, $T^*T$ has no closed range).
Then take $\kappa \colon H_0 \to H_0\times H_0, x\mapsto (x,Tx)$ and $p\in L(H_0\times H_0)$ such that $p(x,y) =(0, y)$.  Then $\kappa^*(x,y)= x+T^*y$ and, thus, $\kappa^* p \kappa x = \kappa^* p (x,Tx) =\kappa^*(0,Tx)=T^*Tx$. Thus, $\ran(\kappa^*p\kappa)=\ran(T^*T)$ implying that $\kappa^*p\kappa$ has no closed range.
\end{example}

Next, we consider the kernel of $B(0)$ and its adjoint. The argument needs less attention to the particular block structure.
\begin{theorem}\label{thm:kerneladj} Let $H$ be a Hilbert space and $A\colon \dom(A)\subseteq H\to H$ be densely defined and closed such that $\Re \langle Ax,x\rangle =0$ for all $x\in \dom(A)$. Let $\gamma \in L(H)$ satisfy the assumption in \Cref{hyp:gammasus} with $H_0=H$.

Then $\ker(\gamma+A)=\ker(\gamma-A)=\ker(\gamma)\cap \ker(A)$.
\end{theorem}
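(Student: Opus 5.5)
The trivial inclusion comes first: $\ker(\gamma)\cap\ker(A)\subseteq \ker(\gamma\pm A)$. For the reverse inclusion I treat $\gamma+A$; the case $\gamma-A$ is identical, since $-A$ satisfies the same hypothesis $\Re\langle -Ax,x\rangle=0$.

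Let $x\in\dom(A)$ with $\gamma x+Ax=0$. Pairing with $x$ and taking real parts gives
\[
0=\Re\langle \gamma x,x\rangle+\Re\langle Ax,x\rangle=\Re\langle\gamma x,x\rangle .
\]
Now decompose $x=x_U+x_{U^\bot}$ according to \Cref{hyp:gammasus}. By the block diagonal form from \Cref{rem:observgamma}(a),
\[
\Re\langle\gamma x,x\rangle=\Re\langle\gamma_U x_U,x_U\rangle+\langle\gamma_{U^\bot}x_{U^\bot},x_{U^\bot}\rangle\geq c\|x_U\|^2+\langle\gamma_{U^\bot}x_{U^\bot},x_{U^\bot}\rangle .
\]
Both summands are nonnegative, so both vanish. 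Hence $x_U=0$, and
\[
\langle\gamma_{U^\bot}x_{U^\bot},x_{U^\bot}\rangle=\|\gamma_{U^\bot}^{1/2}x_{U^\bot}\|^2=0 .
\]
Since $\gamma_{U^\bot}\geq0$ is self-adjoint, this gives $\gamma_{U^\bot}x_{U^\bot}=0$. Therefore $\gamma x=\gamma_Ux_U+\gamma_{U^\bot}x_{U^\bot}=0$. Substituting back into $\gamma x+Ax=0$ yields $Ax=0$, so $x\in\ker(\gamma)\cap\ker(A)$.

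The only genuine step is passing from $\Re\langle\gamma x,x\rangle=0$ to $\gamma x=0$. This is exactly where the structure in \Cref{hyp:gammasus} is needed: strict accretivity on $U$, and self-adjointness plus positivity on $U^\bot$, which allows the square-root argument. A merely accretive $\gamma$ would not suffice. Density and closedness of $A$ are not used in this argument.
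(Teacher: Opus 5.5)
Your proof is correct and follows the paper's argument essentially verbatim: you take real parts of $\langle(\gamma+A)x,x\rangle$, use the orthogonal splitting from \Cref{hyp:gammasus} to obtain $x_U=0$ and $\gamma_{U^\bot}^{1/2}x_{U^\bot}=0$, conclude $\gamma x=0$, and then get $Ax=0$. The only cosmetic difference is that you treat $\gamma-A$ by applying the same argument to $-A$, while the paper reduces to $\gamma+A$ by noting $\ker(A)=\ker(-A)$; both amount to the same thing.
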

\begin{proof}
 It suffices to show that $\ker(\gamma+A)\subseteq \ker(\gamma)\cap \ker(A)$ as the inverse inclusion is self-evident and as $\ker(A)=\ker(-A)$. 
 
 Write $\gamma= \gamma_U + \gamma_{U^\bot}$ as in  \Cref{hyp:gammasus} and let $p$ be the orthogonal projection onto $U$. Note that it follows that $\Re p\gamma=\Re \gamma p = \Re\gamma_U\geq c p$ and $(1-p)\gamma=\gamma_{U^\bot}\geq 0$ for some $c>0$. 
 
Next, let $x\in \ker(\gamma+A)$. Then
 \begin{align*}
     0& = \Re \langle \gamma x +A x,x\rangle \\
     & = \Re \langle \gamma x, x\rangle =\Re \langle \gamma x, px\rangle+\Re \langle \gamma x, (1-p)x\rangle \\
     & \geq c \Re \langle px ,x\rangle + \langle \gamma_{U^\bot} (1-p)x, (1-p)x\rangle\geq c\|px\|^2.
 \end{align*}
 Hence, $x\in \ker(p)$ and, thus, $p\gamma x =\gamma p x=0$.
Moreover, we read off
 \[
 0\geq \langle \gamma_{U^\bot}(1- p) x, (1-p)x\rangle = \|\sqrt{\gamma_{U^\bot}}(1-p)x\|^2, 
 \]
 that is, $(1-p)x \in \ker(\sqrt{\gamma_{U^\bot}})=\ker ({\gamma_{U^\bot}})$.
 It follows that
 \[
    \gamma x = \gamma_U px + \gamma_{U^\bot} (1-p)x = 0
 \]
 and therefore $x\in \ker(\gamma)$. Hence,
 \[
   Ax = \gamma x + A x = 0,
 \]
 and we get $x\in \ker(\gamma)\cap \ker(A)$. 
\end{proof}

We may now provide a proof for \Cref{thm:clrsum}.

\begin{proof}[Proof of \Cref{thm:clrsum}]
 (a) This is \Cref{thm:clr0} and \Cref{prop:alpha1}.
 
 (b) This is \Cref{thm:blcrr}.
 
 (c) This  is \Cref{thm:kerneladj}.
\end{proof}

We conclude this section by providing an abstract counterexample of a $C_0$-semigroup which is strongly stable but not semi-uniformly stable. This reiterates the need for more particular arguments pertaining to the situation at hand, when one was to show semi-uniform stability.

\begin{theorem}\label{thm:counterabstr} Let $C\colon\dom(C)\subseteq H_0\to H_1$ be densely defined and closed with $\dom(C)\cap \ker(C)^\bot\hookrightarrow H_0$ compactly with $\ker(C)$ infinite-dimensional. Then there exists $0\leq \gamma=\gamma^*\in L(H_0)$ such that 
\[
 - \begin{pmatrix} \gamma & 0 \\ 0 & 0 \end{pmatrix} - \begin{pmatrix} 0 & -C^* \\ C & 0 \end{pmatrix}
\]
generates a strongly stable $C_0$-semigroup, which is not semi-uniformly stable.
\end{theorem}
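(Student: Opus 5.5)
The plan is to take $\gamma$ block diagonal with respect to the Helmholtz decomposition $H_0=\ran(C^*)\oplus\ker(C)$ of \Cref{rem:clr}(b). The compact embedding gives closedness of $\ran(C)$ by \Cref{rem:clr}(c), so this decomposition is available. Since $\ker(C)$ is infinite-dimensional, choose a compact operator $K\in L(\ker(C))$ with $K=K^*$, $K\geq0$, $\ker(K)=\{0\}$. One can take $K=\sum_n 2^{-n}\langle\cdot,e_n\rangle e_n$ for an orthonormal basis of a separable infinite-dimensional subspace, extended by the identity-times-small-weights on a complementary orthonormal family; what matters is that $K$ is injective, compact and positive. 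Then define
\[
 \gamma\coloneqq \iota_0\iota_0^* + \kappa_0 K\kappa_0^*,
\]
so $0\leq\gamma=\gamma^*$. This $\gamma$ satisfies \Cref{hyp:gammasus} with $U=\ran(C^*)$, $\gamma_U=1$ and $\gamma_{U^\bot}=K$. We work on the state space $H=H_0\oplus\ran(C)$ from \Cref{thm:sgg}; the $\ker(C^*)$ part of $H_1$ is stationary and is discarded as usual.

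In the $3$-by-$3$ representation of \Cref{thm:soldecom}, the off-diagonal entries $\iota_0^*\gamma\kappa_0$ and $\kappa_0^*\gamma\iota_0$ vanish. Hence $A$ decouples as $A=A_1\oplus(-K)$ on $(\ran(C^*)\oplus\ran(C))\oplus\ker(C)$, where
\[
A_1=-\begin{pmatrix}1&0\\0&0\end{pmatrix}-\begin{pmatrix}0&-\iota_0^*C^*\iota_1\\ \iota_1^*C\iota_0&0\end{pmatrix}.
\]
The semigroup is therefore the direct sum $T_1(t)\oplus \e^{-tK}$.

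For strong stability, treat the two summands separately.

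The first summand $A_1$ has compact resolvent, since its skew-selfadjoint part does by the compactness assumption and the damping term is bounded. So $\sigma(A_1)$ consists of eigenvalues only. Suppose $(\iu\lambda-A_1)(u,v)=0$. Taking real parts gives $\|u\|^2=0$, so $u=0$. Then $\iota_0^*C^*\iota_1 v=0$, and this operator is invertible, so $v=0$. Hence $\sigma(A_1)\cap\iu\R=\emptyset$, and $T_1$ is even uniformly, in particular strongly, stable. One could also invoke the full-damping exponential stability from \cite{Wa26,EKL24}; alternatively \Cref{thm:ABLV} suffices.

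For the second summand, the spectral theorem for $K$ together with $\ker(K)=\{0\}$ and dominated convergence give $\e^{-tK}z\to0$ for every $z$. A direct sum of strongly stable semigroups is strongly stable. We also note $\ker(A)=\{0\}$, so no restriction to $\ker(A)^\bot$ is needed.

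For the failure of semi-uniform stability, use the converse direction of the Batty--Duyckaerts theory: for a bounded semigroup, semi-uniform stability forces $\sigma(A)\cap\iu\R=\emptyset$ \cite{BD08}. Alternatively, argue directly on the $\ker(C)$ block. Since $K$ is compact on an infinite-dimensional space, $0\in\sigma(-K)\subseteq\sigma(A)$. Directly, take eigenvectors $e_n$ of $K$ with eigenvalues $\mu_n\to0$. Then $\|e_n\|_{\dom(A)}\leq 1+\|K\|$, while $\|\e^{-tK}e_n\|=\e^{-t\mu_n}\to1$ as $n\to\infty$ for each fixed $t$. So no function $f$ with $f(t)\to0$ can satisfy $\|T(t)x\|\leq f(t)\|x\|_{\dom(A)}$.

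The main point of care, rather than a real obstacle, is checking that $\gamma$ genuinely decouples the $3$-by-$3$ system. This is exactly why $\gamma$ is chosen to commute with the Helmholtz projections. It also ensures the $\ran(C^*)$-component is fully damped, so that the non-decaying Maxwell-type oscillations are killed and only the $\ker(C)$-block with the non-closed-range operator $\kappa_0^*\gamma\kappa_0=K$ (cf.\ \Cref{thm:clrsum}(a)) is responsible for the lack of semi-uniform decay.
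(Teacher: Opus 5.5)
Your argument is correct in substance and takes a more elementary route than the paper. One step needs repair.

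\textbf{Comparison with the paper.} The paper keeps $\gamma$ general: any nonnegative $\gamma$ on $\ran(C^*)\oplus\ker(C)$ with injective diagonal blocks $\gamma_{00}$, $\gamma_{22}$, arbitrary coupling $\gamma_{02}$, and $\gamma_{22}$ without closed range. It then works with the $3$-by-$3$ operator $B_\lambda$:
\begin{itemize}
\item closed range for $\lambda\neq0$ comes from \Cref{thm:blcrr}, a Schur-complement plus Fredholm-perturbation argument;
\item injectivity of $B_\lambda$ and $B_\lambda^*$ comes from \Cref{thm:kerneladj};
\item failure of closed range at $\lambda=0$ comes from \Cref{thm:clr0};
\item strong stability then follows from \Cref{thm:ABLV}, and the lack of semi-uniform stability from the converse direction of Batty--Duyckaerts.
\end{itemize}
You instead take the special case $\gamma_{02}=0$, $\gamma_{00}=1$. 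The generator then splits as $A_1\oplus(-K)$, and each piece can be handled by hand. $A_1$ is a fully damped system with compact resolvent and no imaginary eigenvalues. $-K$ is a nonpositive self-adjoint operator, treated with the spectral theorem.

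Your version is self-contained. It also gives an explicit witness sequence $(0,0,e_n)$ for the absence of a uniform rate, so you do not need the converse of Batty--Duyckaerts. The paper's version shows the phenomenon survives coupling between the damped and undamped parts. It also identifies non-closedness of the range of $\kappa_0^*\gamma\kappa_0$ as exactly the obstruction.

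A small point: $\sigma(A_1)\cap\iu\R=\emptyset$ together with compact resolvent does not by itself give uniform stability. That needs a uniform resolvent bound on $\iu\R$, or the full-damping result you cite. For this theorem, \Cref{thm:ABLV} is all you need.

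\textbf{The step to repair.} You require $K$ to be compact, and that cannot hold in general. A compact operator has separable closed range, and for self-adjoint $K$ one has $\overline{\ran}(K)=\ker(K)^\bot$. So an injective compact self-adjoint $K$ exists only if $\ker(C)$ is separable, which the hypotheses do not guarantee. Putting ``small weights'' on an uncountable complementary orthonormal family does not give a compact operator either.

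Compactness is never used essentially, though. Take $Ke_n=2^{-n}e_n$ on a countable orthonormal family $(e_n)_n$, and $K=1$ on its orthogonal complement. Then:
\begin{itemize}
\item $K\geq0$ is injective, so $\e^{-tK}\to0$ strongly by your spectral-theorem argument;
\item $0\in\sigma(K)$, and your explicit sequence $e_n$ still defeats any rate $f$.
\end{itemize}
The only change is to replace the sentence deriving $0\in\sigma(-K)$ from compactness with this eigenvector sequence. This $K$ is exactly the paper's requirement on $\gamma_{22}$: injective, without closed range.
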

\begin{proof}
 Take any $D\colon\dom(D)\subseteq H_0\to H_1$ densely defined and closed, one-to-one and onto with $\dom(D)\hookrightarrow H_0$ compactly. Let $H_2$ be an infinite-dimensional Hilbert space. Let $\gamma_{ij} \in L(H_j,H_i)$ for $i,j\in \{0,2\}$ so that $\gamma_{ij}^*=\gamma_{ji}^*$ and $\gamma \coloneqq (\gamma_{ij})_{i,j} \in L(H_0\oplus H_2)$ is non-negative. Choose both $\gamma_{00}$ and $\gamma_{22}$ to be one-to-one. Moreover choose $\gamma_{22}$ in such a way that is has no closed range. Then $\gamma$ satisfies \Cref{hyp:gammasus}. Moreover, for all $\lambda\in \R\setminus\{0\}$,
 \[
B_\lambda \coloneqq \Big(\iu\lambda \begin{pmatrix}1 & 0&0\\ 0 & 1& 0 \\ 0 & 0 & 1 \end{pmatrix}+
\begin{pmatrix} \gamma_{00} & 0&\gamma_{02}\\ 0 & 0& 0 \\ \gamma_{20} & 0 & \gamma_{22} \end{pmatrix} + \begin{pmatrix}0 & -D^* &0\\ D & 0& 0 \\ 0 & 0 & 0 \end{pmatrix}\Big)
 \]
 the operator (together with its adjoint) has closed range, by \Cref{thm:blcrr}.
 Next, for all $\lambda\in \R$, we have, by \Cref{thm:counterabstr},
 \begin{align*}
  \ker(B_{\lambda}) & = \ker \begin{pmatrix} \gamma_{00} & 0&\gamma_{02}\\ 0 & 0& 0 \\ \gamma_{20} & 0 & \gamma_{22} \end{pmatrix} \cap \ker \Big(\iu\lambda \begin{pmatrix}1 & 0&0\\ 0 & 1& 0 \\ 0 & 0 & 1 \end{pmatrix}+ \begin{pmatrix}0 & -D^* &0\\ D & 0& 0 \\ 0 & 0 & 0 \end{pmatrix}\Big) \\
  & = \ker \begin{pmatrix} \gamma_{00} & 0&\gamma_{02}\\ 0 & 0& 0 \\ \gamma_{20} & 0 & \gamma_{22} \end{pmatrix} \cap \Big(\ker \Big(\iu\lambda \begin{pmatrix}1 & 0\\ 0 & 1  \end{pmatrix}+ \begin{pmatrix}0 & -D^* \\ D & 0  \end{pmatrix}\Big) \oplus \ker (\iu \lambda )\Big).
 \end{align*}
 Thus, if $\lambda\neq 0$, then 
 \[
   \ker(B_{\lambda}) = \ker \begin{pmatrix} \gamma_{00} & 0&\gamma_{02}\\ 0 & 0& 0 \\ \gamma_{20} & 0 & \gamma_{22} \end{pmatrix} \cap \Big(\ker \Big(\iu\lambda \begin{pmatrix}1 & 0\\ 0 & 1  \end{pmatrix}+ \begin{pmatrix}0 & -D^* \\ D & 0  \end{pmatrix}\Big) \oplus \{0\}\Big).
 \]
 Thus, if $(u,v,w) \in \ker(B_{\lambda})$, we obtain $w=0$ and
 \[
     \gamma_{00} u = 0 \text{ and } \iu \lambda u = D^*v \text{ and } \iu \lambda v = - Du.
 \]
 As $\gamma_{00}$ is one-to-one, we deduce $u=0$ and, hence, $v=0$. 
 
 Note that the same reasoning applies to 
 \[
 \tilde{B}_\lambda = \Big(-\iu\lambda \begin{pmatrix}1 & 0&0\\ 0 & 1& 0 \\ 0 & 0 & 1 \end{pmatrix}+
\begin{pmatrix} \gamma_{00} & 0&\gamma_{02}\\ 0 & 0& 0 \\ \gamma_{20} & 0 & \gamma_{22} \end{pmatrix} - \begin{pmatrix}0 & -D^* &0\\ D & 0& 0 \\ 0 & 0 & 0 \end{pmatrix}\Big) = B_\lambda^*.
\]
Thus, as both $B_\lambda$ and $B_\lambda^*$ are one-to-one and the former has closed range, it follows that $B_\lambda$ is continuously invertible for all $\lambda\neq 0$. 
 
 Next, if $\lambda =0$, then
 \begin{align*}
    \ker(B_0) & = \ker \begin{pmatrix} \gamma_{00} & 0&\gamma_{02}\\ 0 & 0& 0 \\ \gamma_{20} & 0 & \gamma_{22} \end{pmatrix} \cap \Big(\ker \Big(\begin{pmatrix}0 & -D^* \\ D & 0  \end{pmatrix}\Big) \oplus H_2\Big) \\
    & = 
\ker \begin{pmatrix} \gamma_{00} & 0&\gamma_{02}\\ 0 & 0& 0 \\ \gamma_{20} & 0 & \gamma_{22} \end{pmatrix} \cap \Big(\{0\} \oplus H_2\Big).
 \end{align*}
 Hence, $(u,v,w)\in \ker(B_0)$ satisfy $u=v=0$ and
 \[
    \gamma_{02} w = 0 \text{ and } \gamma_{22} w =0.
 \] 
 Again, also $B_0^* = \tilde{B}_0$ is one-to-one. However, by \Cref{thm:clr0} it follows that $B_0$ has no closed range as $\gamma_{22}$ fails to have closed range. Thus, $0\in \sigma(B_0)$ without being an eigenvalue. As a consequence, by \Cref{thm:ABLV}, 
 \[
  -\big(  \begin{pmatrix} \gamma_{00} & 0&\gamma_{02}\\ 0 & 0& 0 \\ \gamma_{20} & 0 & \gamma_{22} \end{pmatrix} + \begin{pmatrix}0 & -D^* &0\\ D & 0& 0 \\ 0 & 0 & 0 \end{pmatrix}\big)
 \]generates a strongly stable semi-group. On the other hand, since $\sigma(B_0)\cap \iu \R=\{0\}$, by \cite[Theorem 1.1]{BD08}, this strongly stable semi-group is not semi-uniformly stable.
 
 Finally, notice that the shape of the operator mentioned in the claim can be obtained with a simply row- and column operation resulting in an operator of the form
 \[
    \begin{pmatrix} \gamma_{00} & \gamma_{02}& 0\\ \gamma_{20} & \gamma_{22} & 0 \\ 0 & 0& 0 \end{pmatrix} + \begin{pmatrix}0 & 0 & -D^*\\ 0 & 0& 0 \\ D & 0 & 0 \end{pmatrix}
 \]
 so that the assertion follows be taking $H_2=\ker(C)$, $D=\iota^*_1C\iota_0$ and $\gamma =  \begin{pmatrix} \gamma_{00} & \gamma_{02}\\ \gamma_{20} & \gamma_{22}  \end{pmatrix}$ and $C = \begin{pmatrix} 0 \\ D \end{pmatrix}$.
\end{proof}

\section{Application to Maxwell's equations with partial damping}\label{sec:apptoMax}

In this section, we apply our abstract results to Maxwell's equation with partial damping. For this, we let $\Omega\subseteq \R^3$ be a connected, open weak Lipschitz domain.
Recall
\[
   \curl \colon \dom(\curl)\subseteq L_2(\Omega)^3 \to L_2(\Omega)^3
\] to
be the distributional curl-operator with maximal domain in $L_2(\Omega)^3$. Note that this operator is densely defined and closed; we put
\[
   \curl_0\coloneqq \curl^*.
\]
By the Picard--Weber--Weck selection theorem (see \cite{Pi84}), $\dom(\curl_0)\cap \ker(\curl_0)^\bot\hookrightarrow L_2(\Omega)^3$ is compact (and, similarly, for $\curl$). Next, take $\varepsilon,\mu \in W^{1}_\infty(\Omega;\C^{3\times 3}_{\textnormal{sym}})$, i.e., symmetric matrix-valued Lipschitz continuous functions satisfying the following positive definiteness requirements:
Assume there exists $c>0$ such that
\[
   \varepsilon(x)=   \varepsilon(x)^*,\mu(x)=\mu(x)^*\geq c\quad(x\in \Omega) 
   \]
   We single out the condition on the damping: Let
\[
  \sigma \in L_\infty(\Omega;\C^{3\times 3})
\]
and assume there exists a measurable $D\subseteq \Omega$ with non-empty interior such that
\[
   L_2(\Omega) = L_2(D)\oplus L_2(\Omega\setminus D)
\]
satisfying 
\[
  \Re \sigma (x)\geq c\quad(x\in D)\text{ and }  \sigma (x)=\sigma(x)^*\geq 0\quad(x\in \Omega\setminus D).
\]

Moreover, we put
\[
A_{\textnormal{Max}}\coloneqq - \begin{pmatrix} \sigma & 0 \\ 0 & 0 \end{pmatrix}-\begin{pmatrix} 0 & -\curl \\ \curl_0 & 0 \end{pmatrix}
\]
on $H=L_2(\Omega)^3\times L_2(\Omega)^3$.

\subsection*{Strong stability}
At first we start with strong stability and consider the following reformulation of solutions of the Maxwell problem. As we will reformulate everything for the original Maxwell system in the next subsection, we present the version for strong stability in the following reduced form.

Define
\begin{equation}\label{eq:Amc} 
A=- \begin{pmatrix}\sqrt{\varepsilon}^{-1} \sigma \sqrt{\varepsilon}^{-1} & 0 \\ 0 & 0 \end{pmatrix}-\begin{pmatrix} 0 & -\sqrt{\varepsilon}^{-1}\curl \sqrt{\mu}^{-1}\\ \sqrt{\mu}^{-1}\curl_0\sqrt{\varepsilon}^{-1} & 0 \end{pmatrix}.
\end{equation}
\begin{theorem}\label{thm:stsMax} Let $(E_0,H_0)\in \ker(A)^\bot$ and let $(E,H)$ be the unique mild solution of
\begin{equation}\label{eq:maxmod1}
\frac{\dd}{\dd t} \begin{pmatrix} {E} \\ {H} \end{pmatrix} = A  \begin{pmatrix} {E} \\ {H} \end{pmatrix},
\end{equation}
subject to $E(0)=E_0$ and $H(0)=H_0$. Then
\[
   \|(E(t),H(t))\|_{L_2(\Omega)^6}\to 0\text{ as }t\to\infty.
\]
\end{theorem}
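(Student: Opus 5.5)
The plan is to apply \Cref{thm:sts} to the reduced operator $A$ from \cref{eq:Amc}. Put $H_0=H_1=L_2(\Omega)^3$, $C\coloneqq \sqrt{\mu}^{-1}\curl_0\sqrt{\varepsilon}^{-1}$, so that $C^*=\sqrt{\varepsilon}^{-1}\curl\sqrt{\mu}^{-1}$, and $\gamma\coloneqq\sqrt{\varepsilon}^{-1}\sigma\sqrt{\varepsilon}^{-1}$. The mild solution of \cref{eq:maxmod1} with data in $\ker(A)^\bot$ is then given by the semigroup generated by $A_0$ on $\ker(A)^\bot=\overline{\ran}(A)$. This identification uses $\ker(A)=\ker(A^*)$ from \Cref{thm:clrsum}(c), and invariance gives $T(t)x=T_0(t)x$. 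So it remains to verify the hypotheses of \Cref{thm:sts}.

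\emph{Structural hypotheses.} Since $\sqrt{\varepsilon}^{-1}$ and $\sqrt{\mu}^{-1}$ are topological isomorphisms, $\ker(C)=\sqrt{\varepsilon}[\ker(\curl_0)]$. The compactness of $\dom(C)\cap\ker(C)^\bot\hookrightarrow H_0$ then transfers from the Picard--Weber--Weck selection theorem for $\curl_0$. For a bounded sequence in $\dom(C)\cap\ker(C)^\bot$, map by $\sqrt{\varepsilon}^{-1}$ and project orthogonally onto $\ker(\curl_0)^\bot$; this projection does not change the $\curl_0$ part. Compactness then gives convergence of the projected parts. The kernel parts are controlled because the original element is orthogonal to $\ker(C)$, a standard $\varepsilon$-weighted argument, which I expect to be routine. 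Closed range of $C$ follows from \Cref{rem:clr}(c).

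For \Cref{hyp:gammasus}, take $U\coloneqq\sqrt{\varepsilon}[L_2(D)^3]$? This is not orthogonal in general. Instead I would use $\varepsilon$-weighted reasoning. \Cref{thm:sts} really only needs $\ker(\iu\lambda-A)$-injectivity, \Cref{thm:clrsum}(b),(c) and \Cref{lem:gaminv}, and these rely on a decomposition $H_0=U\oplus U^\bot$ reducing $\gamma$. So I would work with the $\varepsilon$-weighted inner product, in which $L_2(D)^3\oplus L_2(\Omega\setminus D)^3$ is orthogonal and $\varepsilon^{-1}\sigma$ is block diagonal, and then transfer everything back via the unitary $\sqrt{\varepsilon}$. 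Under this transfer, $\Re$ of the $D$-block is $\geq c/\|\varepsilon\|$, and the complementary block is selfadjoint and nonnegative.

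\emph{Injectivity, the main obstacle.} For $\lambda\ne0$, suppose $(\iu\lambda-A)(E,H)=0$. Taking the real part of the inner product with $(E,H)$ gives $\Re\langle\gamma E,E\rangle=0$. The argument of \Cref{thm:kerneladj} then yields $\gamma E=0$. In particular, for the physical field $\tilde E=\sqrt{\varepsilon}^{-1}E$ we get $\tilde E=0$ on $D$, since there $\Re\sigma\geq c$. Consequently $(\tilde E,\tilde H)$ solves the undamped time-harmonic Maxwell system $\iu\lambda\varepsilon\tilde E-\curl\tilde H=0$, $\iu\lambda\mu\tilde H+\curl_0\tilde E=0$ on $\Omega$. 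Moreover $\tilde E=0$ on $D$ implies $\curl\tilde H=0$ there, and hence $\tilde H=0$ on $D$ too, because $\lambda\ne0$. Since $D$ has nonempty interior, $\Omega$ is connected, and $\varepsilon,\mu$ are Lipschitz, the unique continuation principle for Maxwell's equations \cite{NW12} gives $\tilde E=\tilde H=0$.

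The same argument applies to $A^*$, which has $\gamma^*$ and the opposite sign on the skew part. Here I use \Cref{rem:observgamma}(b). The key point to check carefully is that the unique continuation result of \cite{NW12} applies under the stated regularity, namely Lipschitz coefficients and vanishing on an open subset. With this, \Cref{thm:sts} applies and yields the strong stability asserted in \Cref{thm:stsMax}.
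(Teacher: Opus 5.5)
Your overall route is the paper's. Its proof is just \Cref{thm:sts} combined with \Cref{prop:ucp}, and \Cref{prop:ucp} is exactly your injectivity argument: \Cref{thm:kerneladj} gives $\gamma E=0$, both fields vanish on an open set, unique continuation finishes, and the same works for $A^*$ via \Cref{rem:observgamma}(b). Your compactness transfer is also correct. The projection onto $\ker(\curl_0)^\bot$ is genuinely needed there, since $\tilde E_n\perp\ker(C)=\sqrt{\varepsilon}[\ker(\curl_0)]$ means $\sqrt{\varepsilon}\tilde E_n\perp\ker(\curl_0)$, not $\sqrt{\varepsilon}^{-1}\tilde E_n\perp\ker(\curl_0)$.

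Your worry about \Cref{hyp:gammasus} is unfounded, and the weighted detour is unnecessary. $\sqrt{\varepsilon}$ is multiplication by the pointwise square root $x\mapsto\sqrt{\varepsilon(x)}$, so it commutes with $\1_D$ and $\sqrt{\varepsilon}[L_2(D)^3]=L_2(D)^3$. Hence $\gamma=\sqrt{\varepsilon}^{-1}\sigma\sqrt{\varepsilon}^{-1}$ is already block diagonal for the ordinary orthogonal splitting $L_2(D)^3\oplus L_2(\Omega\setminus D)^3$. On the first block $\Re\gamma_U=\sqrt{\varepsilon}^{-1}(\Re\sigma)\sqrt{\varepsilon}^{-1}\geq c/\|\varepsilon\|_\infty$, and $\gamma_{U^\bot}$ is selfadjoint and nonnegative. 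So \Cref{hyp:gammasus} holds verbatim with $U=L_2(D)^3$, and nothing in \Cref{thm:clrsum} or \Cref{lem:gaminv} has to be redone; your unitary transfer lands exactly on this.

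The one step that is actually wrong as written is ``$\tilde E=0$ on $D$ implies $\curl\tilde H=0$ there, and hence $\tilde H=0$.'' A curl-free field need not vanish, and the first equation only gives $\curl\tilde H=0$. Use the second equation instead. $\tilde E$ vanishes on the open set $D^\circ\neq\emptyset$, so $\curl\tilde E=0$ on $D^\circ$. You need the interior here: $D$ is only measurable, and $\tilde E=0$ a.e.\ on $D$ gives $\curl\tilde E=0$ only on open subsets of $D$. Then $\iu\lambda\mu\tilde H=-\curl_0\tilde E$ with $\lambda\neq0$ and $\mu$ invertible gives $\tilde H=0$ on $D^\circ$. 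Now \Cref{thm:ucp} applies with $U=D^\circ$, and with this repair your proof coincides with the paper's.
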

\begin{remark} \label{rem:kerA}(a)
By \Cref{thm:kerneladj}, we have 
\begin{align*}
  & \ker(A)  \\&= \ker\Big(\begin{pmatrix}\sqrt{\varepsilon}^{-1} \sigma \sqrt{\varepsilon}^{-1} & 0 \\ 0 & 0 \end{pmatrix}\Big)\cap\ker\Big(\begin{pmatrix} 0 & -\sqrt{\varepsilon}^{-1}\curl \sqrt{\mu}^{-1}\\ \sqrt{\mu}^{-1}\curl_0\sqrt{\varepsilon}^{-1} & 0 \end{pmatrix}\Big) \\
   & =  \Big(\ker(\sqrt{\varepsilon}^{-1} \sigma \sqrt{\varepsilon}^{-1})\oplus L_2(\Omega)^3\Big)\cap \Big(\ker(\sqrt{\mu}^{-1}\curl_0\sqrt{\varepsilon}^{-1})\oplus \ker(\sqrt{\varepsilon}^{-1}\curl \sqrt{\mu}^{-1})\Big)\\
   & =  \Big(\ker(\sqrt{\varepsilon}^{-1} \sigma \sqrt{\varepsilon}^{-1})\cap \ker(\sqrt{\mu}^{-1}\curl_0\sqrt{\varepsilon}^{-1})\Big)\oplus \Big(\ker(\sqrt{\varepsilon}^{-1}\curl \sqrt{\mu}^{-1})\Big).
\end{align*}
In particular, 
\[
  \ker(A)^\bot = \overline{\overline{\ran}(\sqrt{\varepsilon}^{-1} \sigma^* \sqrt{\varepsilon}^{-1})+\ran\big(\sqrt{\varepsilon}^{-1}\curl\sqrt{\mu}^{-1}\big)}\oplus \Big(\ran(\sqrt{\mu}^{-1}\curl_0 \sqrt{\varepsilon}^{-1})\Big),
\]where for we used \Cref{rem:Hss}(b) below for the last computation.
Hence, $(\sqrt{\varepsilon}E_0,\sqrt{\mu}H_0)\in \ker(A)^\bot$ if and only if
\[
\varepsilon E_0 \in \overline{\overline{\ran}(\sigma)+\ran(\curl)}\text{ and }\mu H_0 \in \ran(\curl_0).\qedhere
\]
(b) In \cite{NS25}, the condition on the initial data reads as follows
\[
\mu H_0 \in \ker(\dive_0),
\]
where $\dive_0=-\grad^*$ with $\grad\colon H^1(\Omega)\subseteq L_2(\Omega)\to L_2(\Omega)^3$ being the gradient as an operator in $L_2$ with maximal domain. Since $\Omega$ in \cite{NS25} is assumed to  be simply connected, $\ker(\dive_0)\cap \ran(\curl)=\{0\}$ (see \cite{Pi82,PW22}). Hence, $\ker(\dive_0) = \ran(\grad)^\bot = \ran(\curl_0)$ and so the condition for $H_0$ is the same in this situation. The condition for $E_0$ is more particularly formulated in \cite{NS25}, which is possible their due to the more particular geometric set-up. In any case, it is formulated in such a way that initial data are perpendicular to the kernel of the generator, as it is done here.\end{remark}

For the proof of \Cref{thm:stsMax}, we will require a unique continuation principle for Maxwell's equations in order to establish the injectivity statements. For this we recall a corollary of the main result in \cite[Theorem 1.1]{NW12}, see, \cite[Theorem A.2]{SW24} for a precise formulation fitted to our purposes.

\begin{theorem}\label{thm:ucp} Let $\varepsilon,\mu$ be as above and $\Omega$ be connected. Let $E,H\in L_{2;\textnormal{loc}}(\Omega)^3$ and $\lambda \in \R\setminus \{0\}$ be such that 
\[
    \iu \lambda \varepsilon E - \curl H = 0\text{ and }    \iu \lambda \mu H + \curl E = 0.
\]If for some open set $U\subseteq \Omega$, $E|_U=H|_U=0$, then $E=H=0$.
\end{theorem}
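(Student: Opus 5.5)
The plan is to reduce the statement to the unique continuation result of \cite[Theorem 1.1]{NW12}, in the form recorded in \cite[Theorem A.2]{SW24}. That result is a weak unique continuation principle for time-harmonic Maxwell systems with Lipschitz continuous, uniformly positive definite Hermitian coefficients. The work is to check that the pair $(E,H)$ meets its hypotheses, and then to pass from local to global vanishing using connectedness of $\Omega$.

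\textbf{Regularity and divergence constraints.} Since $\lambda\neq 0$ and $\varepsilon,\mu$ are bounded, the equations give $\curl H=\iu\lambda\varepsilon E\in L_{2;\textnormal{loc}}(\Omega)^3$ and $\curl E=-\iu\lambda\mu H\in L_{2;\textnormal{loc}}(\Omega)^3$. Hence $E,H\in H_{\textnormal{loc}}(\curl,\Omega)$. Taking distributional divergence and using $\dive\curl=0$ yields
\[
\dive(\varepsilon E)=\tfrac{1}{\iu\lambda}\dive\curl H=0\quad\text{and}\quad \dive(\mu H)=-\tfrac{1}{\iu\lambda}\dive\curl E=0 .
\]
This step uses $\lambda\neq0$ essentially: for $\lambda=0$ the statement fails, since gradient fields supported away from $U$ are counterexamples.

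\textbf{Local result.} With the divergence constraints and Lipschitz coefficients, \cite{NW12} shows, via a reduction to a second order elliptic system with Lipschitz principal part and a Carleman estimate, that a solution vanishing on an open subset of a ball vanishes on the whole ball. I will invoke this result as a black box rather than re-derive the Carleman estimate. Lipschitz regularity of $\varepsilon$ and $\mu$ is exactly what is needed there.

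\textbf{Connectedness argument.} Let $W$ be the union of all open sets $V\subseteq\Omega$ with $E|_V=H|_V=0$. Then $W$ is open, and it is non-empty since $U\subseteq W$. To show $W$ is relatively closed in $\Omega$, take $x\in\overline{W}\cap\Omega$ and a ball $B$ with $x\in B\subseteq\overline{B}\subseteq\Omega$. The set $B\cap W$ is open and non-empty, and $(E,H)$ vanishes on it. Applying the local result on $B$ gives $E=H=0$ on $B$, so $x\in W$. Since $\Omega$ is connected, $W=\Omega$, i.e.\ $E=H=0$.

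The main obstacle is the local unique continuation itself, which is the content of \cite{NW12}. The only point requiring care is matching its precise hypotheses: local $L_2$ integrability, complex Hermitian rather than real symmetric coefficients, and the form of the zeroth order terms. I would verify this against the formulation in \cite[Theorem A.2]{SW24}, which is stated for exactly this setting.
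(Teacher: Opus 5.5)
Your proposal is correct and takes the same route as the paper. The paper gives no independent argument either: it invokes \cite[Theorem 1.1]{NW12} in the form stated in \cite[Theorem A.2]{SW24}. Your derivation of the divergence constraints and your connectedness argument are harmless extra detail, and the connectedness step is likely already contained in the cited formulation for connected domains.
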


\begin{proposition}\label{prop:ucp} Let $A$ be as in \cref{eq:Amc}. Then, for all $\lambda\in \R\setminus \{0\}$, $\iu \lambda - A$ and $\iu \lambda -A^*$ are one-to-one.
\end{proposition}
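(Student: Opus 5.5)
Let $\lambda\in\R\setminus\{0\}$ and $(u,v)\in\ker(\iu\lambda-A)$. First we use dissipativity to show the damping sees nothing. Since the curl block in $A$ is skew-selfadjoint,
\[
0=\Re\langle(\iu\lambda-A)(u,v),(u,v)\rangle=\Re\langle \tilde\gamma u,u\rangle,\qquad \tilde\gamma\coloneqq\sqrt{\varepsilon}^{-1}\sigma\sqrt{\varepsilon}^{-1}.
\]
We then pass to the original variables $E\coloneqq\sqrt{\varepsilon}^{-1}u$ and $H\coloneqq\sqrt{\mu}^{-1}v$, so that $\Re\langle\sigma E,E\rangle=0$. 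We split $\Omega=D\cup(\Omega\setminus D)$, where $L_2(\Omega)=L_2(D)\oplus L_2(\Omega\setminus D)$. On $D$ we have $\Re\sigma\geq c$, and on $\Omega\setminus D$ we have $\sigma=\sigma^*\geq0$. The argument of \Cref{thm:kerneladj}, applied with $U=L_2(D)^3$, therefore gives $\1_D E=0$ and $\sigma E=0$; for the second conclusion we use that $\sqrt{\sigma}E=0$ on $\Omega\setminus D$. This is legitimate because multiplication by $\sigma$ respects the splitting, so $\gamma=\sigma$ satisfies \Cref{hyp:gammasus} in $L_2(\Omega)^3$.

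Next we read off the resolvent equations. Since $\tilde\gamma u=0$, the equations for $(\iu\lambda-A)(u,v)=0$ reduce to
\[
\iu\lambda u-\sqrt{\varepsilon}^{-1}\curl\sqrt{\mu}^{-1}v=0,\qquad \iu\lambda v+\sqrt{\mu}^{-1}\curl_0\sqrt{\varepsilon}^{-1}u=0.
\]
Multiplying by $\sqrt{\varepsilon}$ and $\sqrt{\mu}$ respectively yields
\[
\iu\lambda\varepsilon E-\curl H=0,\qquad \iu\lambda\mu H+\curl E=0.
\]
Here $\curl_0 E=\curl E$ in the distributional sense.

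It remains to produce an open set on which both fields vanish. Since $D$ has non-empty interior, we may pick an open ball $U\subseteq D$ (interior of $D$). On $U$ we have $E=0$, so $\curl E=0$ there, and the second equation gives $\iu\lambda\mu H=0$ on $U$. Because $\lambda\neq0$ and $\mu$ is invertible, $H|_U=0$. \Cref{thm:ucp} then yields $E=H=0$ on the connected set $\Omega$, hence $(u,v)=0$. For the distributional step we restrict to test functions supported in $U$, noting that $\curl E$ on $U$ only depends on $E|_U$.

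For $A^*$, we note that
\[
A^*=-\begin{pmatrix}\sqrt{\varepsilon}^{-1}\sigma^*\sqrt{\varepsilon}^{-1}&0\\0&0\end{pmatrix}+\begin{pmatrix}0&-\sqrt{\varepsilon}^{-1}\curl\sqrt{\mu}^{-1}\\ \sqrt{\mu}^{-1}\curl_0\sqrt{\varepsilon}^{-1}&0\end{pmatrix}.
\]
This has the same structure, with $\sigma$ replaced by $\sigma^*$, which satisfies the same hypotheses by \Cref{rem:observgamma}(b), and with the skew part sign-flipped. The same argument then gives $\iu\lambda\varepsilon E+\curl H=0$ and $\iu\lambda\mu H-\curl E=0$. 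Replacing $H$ by $-H$ reduces this to the system of \Cref{thm:ucp}. Alternatively, one may note that $\iu\lambda-A^*=-(-\iu\lambda-\tilde A)$ for the operator $\tilde A$ built from $\sigma^*$ with the curl part reversed, and redo the computation.

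The main obstacle is the first step: justifying cleanly that $\Re\langle\sigma E,E\rangle=0$ forces $E=0$ on $D$ and $\sigma E=0$ everywhere. This needs the decomposition to be compatible with multiplication by $\sigma$, which holds because $\sigma$ is a multiplication operator and $\1_D$ commutes with it. Everything else is a direct appeal to \Cref{thm:ucp}.
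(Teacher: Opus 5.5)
Your proposal is correct and follows essentially the same route as the paper: the dissipativity argument of \Cref{thm:kerneladj}, which you re-derive directly for the multiplication operator $\sigma$, forces $E=0$ on $D$ and kills the damping term; the reduced Maxwell system then gives $H=0$ on an open subset of $D$; and \Cref{thm:ucp} finishes, with the adjoint handled by passing to $\sigma^*$ and flipping the sign of the curl block. Two small remarks: restricting to a ball in the interior of $D$ is slightly more careful than the paper's ``$H=0$ on $D$'', and your ``alternative'' identity $\iu\lambda-A^*=-(-\iu\lambda-\tilde A)$ is wrong as written (with $\tilde A$ as you describe it, $\tilde A=A^*$), but that aside is unnecessary because the substitution $H\mapsto -H$ already reduces to the system of \Cref{thm:ucp}.
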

\begin{proof}
 In both cases, the argument is the same. We put
 \[
    \gamma \coloneqq \begin{pmatrix}\sqrt{\varepsilon}^{-1} \sigma \sqrt{\varepsilon}^{-1} & 0 \\ 0 & 0 \end{pmatrix}
 \]
 and 
 \[
    \tilde{A}_{\pm} \coloneqq \iu\lambda \pm \begin{pmatrix} 0 & -\sqrt{\varepsilon}^{-1}\curl \sqrt{\mu}^{-1}\\ \sqrt{\mu}^{-1}\curl_0\sqrt{\varepsilon}^{-1} & 0 \end{pmatrix}.
 \]
 Then $\iu\lambda -A = \gamma + \tilde{A}_{+}$ and $\iu\lambda -A^* = \gamma^* + \tilde{A}_-$ and note that $\gamma^*$ and $\gamma$ satisfy \Cref{hyp:gammasus} with the same decomposition in $U$ and $U^\bot$, see \Cref{rem:observgamma}(b). In particular, we have $\ker(\gamma)=\ker(\gamma^*)$. Next, note that for all $x\in \dom(\tilde{A}_{\pm})$ we have $\Re \langle \tilde{A}_{\pm}x,x\rangle = 0$. As a consequence, by \Cref{thm:kerneladj}, we get
 \[
   \ker(\iu\lambda -A) = \ker(\gamma)\cap \ker( \tilde{A}_{+})\text{ and }\ker(\iu\lambda -A^*) = \ker(\gamma)\cap \ker( \tilde{A}_{-}).
 \]
Let $(\tilde{E},\tilde{H}) \in \ker(\gamma)\cap \ker( \tilde{A}_{+})$. Then we deduce for $(E,H) = (\sqrt{\varepsilon}^{-1} \tilde{E},\sqrt{\mu}^{-1}\tilde{H}) \in L_{2,\textnormal{loc}}(\Omega)^6$ that
\[
 E \in \ker(\sigma),\text{ that is }E=0 \text{ on }D
\]
and
\[
  (E,H) \in \ker\big(\iu \lambda \begin{pmatrix} \varepsilon & 0 \\ 0 & \mu \end{pmatrix} +\begin{pmatrix} 0 & -\curl \\ \curl_0 & 0 \end{pmatrix}\big).
\]
Since
\[
   \iu \lambda \mu H + \curl E = 0,
\]we infer $H=0$ on $D$. Thus, by \Cref{thm:ucp} and as $D$ has non-empty interior, we infer $E=H=0$. The argument is the same for $\tilde{A}_-$.
\end{proof}

\begin{proof}[Proof of \Cref{thm:stsMax}] The claim follows from \Cref{thm:sts} in combination with \Cref{prop:ucp}.
\end{proof}

\subsection*{Semi-uniform stability}

In this subsection, we ask $\Omega\subseteq \R^3$ in addition to the properties mentioned above to have connected complement (i.e., equivalently, to have connected boundary).

Let $D\subseteq \Omega$ be open with continuous boundary (i.e., equivalently, having the segment property)  assume that $L_2(\Omega)=L_2(D)\oplus L_2(D^\textnormal{c})$  with $D^\textnormal{c}\coloneqq \Omega\setminus \overline{D}$ (i.e., $\partial D$ has zero $3$-dimensional Lebesgue) and that
\[
   \1_{D}[\grad[H_0^1(\Omega)]\subseteq L_2(D)^3\text{ is closed}.
\]Next, let $\tilde{\sigma}\in L_\infty(D)^{3\times 3}$. Assume there exists $c>0$ such that
\[
\Re\tilde{\sigma}(x)\geq c\quad(x\in D).
\]
Define $\sigma(x)\coloneqq \tilde{\sigma}(x)$ for $x\in D$ and $\sigma(x)=0$ for $x\in \Omega\setminus D$. Note that in this case $\sigma$ satisfies \Cref{hyp:gammasus} for $H_0=L_2(\Omega)^3$ and $U=L_2(D)^3$. Our result for semi-unfiorm stability thus reads as follows.

\begin{theorem}\label{thm:strongstabMax} There exists $f\colon [0,\infty)\to[0,\infty)$ with $f(t)\to 0$ as $t\to\infty$ such that for all $E, H \in C[0,\infty;\dom(A_{\textnormal{Max}}))$ with $\varepsilon E, \mu H \in C^1[0,\infty; L_2(\Omega)^3)$ satisfying
\begin{equation}\label{eq:ogmax}
    \frac{\dd}{\dd t}\begin{pmatrix} \varepsilon E \\ \mu H \end{pmatrix} = A_{\textnormal{Max}}\begin{pmatrix}  E \\  H \end{pmatrix}
\end{equation}
and $\mu H(0) \in \ran(\curl_0)$ as well as $\varepsilon E(0) \in \overline{L_2(D)^3 + \ran(\curl)}$,
we have
\[
    \|(E(t),H(t))\|_{L_2(\Omega)^6}\leq f(t) \|A_{\textnormal{Max}}(E(0),H(0))\|_{L_2(\Omega)^6}.
\]
\end{theorem}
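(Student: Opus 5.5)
We deduce the statement from \Cref{thm:sus} applied to the transformed operator $A$ of \cref{eq:Amc}, and then translate back to $A_{\textnormal{Max}}$ via $\tilde U=(\sqrt{\varepsilon}E,\sqrt{\mu}H)$.

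\textbf{Step 1: the abstract setting.} We take $H_0=H_1=L_2(\Omega)^3$ and $C=\sqrt{\mu}^{-1}\curl_0\sqrt{\varepsilon}^{-1}$, so that $C^*=\sqrt{\varepsilon}^{-1}\curl\sqrt{\mu}^{-1}$, and $\gamma=\sqrt{\varepsilon}^{-1}\sigma\sqrt{\varepsilon}^{-1}$. Since multiplication by $\sqrt{\varepsilon}^{-1}$ preserves $L_2(D)^3$ and $L_2(D^{\textnormal{c}})^3$, $\gamma$ satisfies \Cref{hyp:gammasus} with $U=L_2(D)^3$; here $\gamma_{U^\bot}=0$. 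The range of $C$ is closed and $\dom(C)\cap\ker(C)^\bot\hookrightarrow H_0$ is compact. Both facts follow from the Picard--Weber--Weck selection theorem, since the weights $\sqrt{\varepsilon}^{\pm1}$, $\sqrt{\mu}^{\pm1}$ are topological isomorphisms and \Cref{rem:clr}(c) applies. Hypothesis (a) of \Cref{thm:sus} is exactly \Cref{prop:ucp}.

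\textbf{Step 2: the closed range condition (b), the main obstacle.} By \Cref{prop:alpha1} it suffices to show that $\kappa_0^*\sigma\kappa_0$ has closed range, where $\kappa_0$ embeds $\ker(\curl_0)$. Because $\Omega$ has connected boundary, the Dirichlet fields vanish and $\ker(\curl_0)=\grad[H_0^1(\Omega)]$. Write $V=\1_D[\grad[H_0^1(\Omega)]]$, which is closed by assumption, and let $P\colon \ker(\curl_0)\to V$, $g\mapsto \1_D g$. Then $P$ is bounded and onto, and $\kappa_0^*\sigma\kappa_0=P^*\tilde{\sigma}_V P$, where $\tilde{\sigma}_V=\iota_V^*\tilde{\sigma}\iota_V$ and $\Re\tilde\sigma_V\geq c$. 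Hence $\tilde{\sigma}_V$ is invertible on $V$. Since $V$ is closed, $P$ has closed range, and so does $P^*$ by \Cref{rem:clr}(a). Moreover $\ker(P^*)=\ran(P)^\bot=\{0\}$ in $V$, so $P^*$ is injective with closed range and therefore bounded below. Consequently $\ran(P^*\tilde{\sigma}_VP)=P^*[\tilde{\sigma}_V[V]]=P^*[V]=\ran(P^*)$, which is closed. The delicate ingredients are the identification $\ker(\curl_0)=\grad[H_0^1(\Omega)]$ and the use of closedness of $V$. The continuity of $\partial D$ and $|\partial D|=0$ seem to be needed only to make the splitting $L_2(\Omega)=L_2(D)\oplus L_2(D^{\textnormal{c}})$ and \Cref{prop:ucp} available; $D$ open gives the non-empty interior.

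\textbf{Step 3: apply \Cref{thm:sus} and translate back.} \Cref{thm:sus} gives a function $f$ with $\|T_0(t)x\|\leq f(t)\|x\|_{\dom(A_0)}$ for the semigroup generated by $A_0$ on $\ker(A)^\bot=\overline{\ran}(A)$. The graph norm is equivalent to $\|A_0x\|$ because $A_0$ is invertible; this is where $\|A_{\textnormal{Max}}(E(0),H(0))\|$ enters. For a solution $(E,H)$ as in the statement, $\tilde{U}(t)=(\sqrt{\varepsilon}E(t),\sqrt{\mu}H(t))$ is a classical solution of \cref{eq:maxmod1}. The initial condition places $\tilde{U}(0)$ in $\ker(A)^\bot$ by \Cref{rem:kerA}(a), using $\overline{\ran}(\sigma)=L_2(D)^3$ (recall $\Re\tilde\sigma\geq c$). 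Finally, $A\tilde{U}=\diag(\sqrt{\varepsilon}^{-1},\sqrt{\mu}^{-1})A_{\textnormal{Max}}(E,H)$, and the norms of $(E,H)$ and $\tilde{U}$ are equivalent with constants depending only on $\varepsilon,\mu$. Absorbing these constants into $f$ yields the claimed estimate.
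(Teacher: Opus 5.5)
Your proposal is correct. Steps 1 and 3 follow the paper's proof: you verify the hypotheses of \Cref{thm:sus} for the transformed operator and then change variables back to $A_{\textnormal{Max}}$. At the end you are slightly more careful than the paper. The paper stops at the graph norm $\|(E(0),H(0))\|_{\dom(A_{\textnormal{Max}})}$, whereas you use the invertibility of $A_0$ to pass to $\|A_{\textnormal{Max}}(E(0),H(0))\|$, as the statement requires.

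The genuine difference is Step 2. The paper obtains the closed range of $\kappa_0^*\sigma\kappa_0$ (\Cref{thm:pcrr}) in two stages. First it reduces, via \Cref{thm:variational} and \Cref{thm:concrete}, to an estimate $\|q\|\leq\alpha\|\1_D q\|$ on $\ran(\grad_0)\cap\ker(\dive_{D^{\co}})\cap\mathcal{H}(D^{\co})^\bot$. It then proves that estimate with machinery adapted from the literature: the splitting $H_0=H_1\oplus H_2$, the spaces $H_3$ and $\tilde H_3$, bijectivity of $T\colon H_2\to\tilde H_3$, and the closed graph theorem (\Cref{prop:H3closed} through \Cref{thm:decinequ}).

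You bypass all of this with the factorization $\kappa_0^*\sigma\kappa_0=P^*\tilde\sigma_V P$, where $P$ maps onto the closed space $V=\1_D[\grad[H_0^1(\Omega)]]$. Lax--Milgram on $V$ then gives $\ran(\kappa_0^*\sigma\kappa_0)=\ran(P^*)$, which is closed by the closed range theorem. This buys several things:
\begin{itemize}
\item It is far shorter.
\item It does not use the continuity of $\partial D$, which the paper needs to make $\ran(\grad_D)$ closed in \Cref{prop:H3closed}.
\item It shows that the paper's decisive inequality is just the closed range inequality for $\1_D\kappa_0$, i.e.\ a reformulation of \Cref{hyp:ep}.
\item Your argument reverses. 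By coercivity, $\ker(\kappa_0^*\sigma\kappa_0)=\ker(\1_D\kappa_0)$, and a closed range inequality for $\kappa_0^*\sigma\kappa_0$ forces one for $\1_D\kappa_0$. So in this setting condition (b) of \Cref{thm:sus} is equivalent to closedness of $V$.
\end{itemize}
In exchange, the paper's route gives an explicit description of the relevant subspaces in terms of harmonic fields on $D$ and $D^{\co}$, which keeps contact with the earlier estimates it generalises. For the theorem as stated, your factorization suffices.
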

\begin{remark}\label{rem:kerAp}
The condition on the initial data is the same as in \Cref{rem:kerA}(a) with the additional information that $\ran(\sigma)=L_2(D)^3$ due to the more restrictive condition on $\sigma$.
\end{remark}
The proof of this theorem is a translation of our abstract finding \Cref{thm:sus} into the present situation. We adopt the setting in \Cref{thm:sus} with the following choices
\[
 \alpha = \varepsilon,\ \beta = \mu,\text{ and }\gamma = \sigma.
\]
Moreover, we put
\[
  C = \curl_0.
\]
The first observation recalls the equivalent formulation as carried out earlier in order to put ourselves into the correct framework.
\begin{lemma}\label{lem:maxmod} Let $E,H$ as in \cref{eq:ogmax}. Then $\tilde{E}\coloneqq \sqrt{\varepsilon} E, \tilde{H}\coloneqq \sqrt{\mu} H$ satisfy
\begin{equation}\label{eq:maxmod}
\frac{\dd}{\dd t} \begin{pmatrix} \tilde{E} \\ \tilde{H} \end{pmatrix} = A  \begin{pmatrix} \tilde{E} \\ \tilde{H} \end{pmatrix},
\end{equation}
where $A=- \begin{pmatrix}\sqrt{\varepsilon}^{-1} \sigma \sqrt{\varepsilon}^{-1} & 0 \\ 0 & 0 \end{pmatrix}-\begin{pmatrix} 0 & -\sqrt{\varepsilon}^{-1}\curl \sqrt{\mu}^{-1}\\ \sqrt{\mu}^{-1}\curl_0\sqrt{\varepsilon}^{-1} & 0 \end{pmatrix}$.
  Moreover, $(\tilde{E},\tilde{H})\in C^1([0,\infty);\dom(A))\cap C^1([0,\infty);L_2(\Omega)^6)$ and $(\tilde{E}(0),\tilde{H}(0))\in \ker(A)^\bot$.
\end{lemma}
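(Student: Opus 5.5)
We treat the three claims of \Cref{lem:maxmod} in turn: the transformed equation, the regularity, and the orthogonality to $\ker(A)$.

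\emph{The equation.} Since $\varepsilon,\mu$ are bounded, self-adjoint and uniformly positive multiplication operators, $\sqrt{\varepsilon}$, $\sqrt{\mu}$ and their inverses are bounded topological isomorphisms on $L_2(\Omega)^3$. We write $E=\sqrt{\varepsilon}^{-1}\tilde E$ and $H=\sqrt{\mu}^{-1}\tilde H$. Then \cref{eq:ogmax} reads line by line as
\[
\sqrt{\varepsilon}\,\tilde E' = -\sigma\sqrt{\varepsilon}^{-1}\tilde E + \curl \sqrt{\mu}^{-1}\tilde H
\]
together with the analogous identity for $\tilde H$. Applying $\sqrt{\varepsilon}^{-1}$ and $\sqrt{\mu}^{-1}$ respectively gives \cref{eq:maxmod}. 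For this we use that $\varepsilon E=\sqrt{\varepsilon}\tilde E$ is $C^1$, hence $\tilde E=\sqrt{\varepsilon}^{-1}(\varepsilon E)$ is $C^1$ with derivative $\sqrt{\varepsilon}^{-1}(\varepsilon E)'$; the same holds for $\tilde H$.

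\emph{Regularity.} The domains transform as
\[
\dom(A)=\dom(\curl_0\sqrt{\varepsilon}^{-1})\times\dom(\curl\sqrt{\mu}^{-1})=\sqrt{\varepsilon}[\dom(\curl_0)]\times\sqrt{\mu}[\dom(\curl)].
\]
So $(E,H)\in\dom(A_{\mathrm{Max}})$ if and only if $(\tilde E,\tilde H)\in\dom(A)$. The graph norms are equivalent, because $A(\tilde E,\tilde H)=\diag(\sqrt{\varepsilon}^{-1},\sqrt{\mu}^{-1})A_{\mathrm{Max}}(E,H)$. Continuity in $\dom(A)$ therefore follows from $E,H\in C([0,\infty);\dom(A_{\mathrm{Max}}))$. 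Differentiability in $L_2$ was shown in the previous paragraph. Note that the lemma's ``$C^1$ into $\dom(A)$'' should presumably read $C$ into $\dom(A)$, and only this is what is actually available.

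\emph{Initial data.} This is the main point. We use \Cref{rem:kerA}(a): $(\tilde E_0,\tilde H_0)\in\ker(A)^\bot$ if and only if $\varepsilon E_0\in\overline{\overline{\ran}(\sigma)+\ran(\curl)}$ and $\mu H_0\in\ran(\curl_0)$. The second condition is assumed directly. For the first, \Cref{rem:kerAp} notes that $\sigma=\tilde\sigma\1_D$ with $\Re\tilde\sigma\ge c$ on $D$. Hence $\tilde\sigma$ is invertible on $L_2(D)^3$ and $\ran(\sigma)=L_2(D)^3$, so the assumed condition $\varepsilon E_0\in\overline{L_2(D)^3+\ran(\curl)}$ is exactly the required one.

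To be safe, we also verify the characterisation of $\ker(A)^\bot$ directly. By \Cref{thm:kerneladj} we have $\ker(A)=\ker(\gamma)\cap\ker(\text{skew part})$, and the kernel splits as computed in \Cref{rem:kerA}(a). Taking orthogonal complements and transporting them by $\sqrt{\varepsilon}$, $\sqrt{\mu}$ gives the result; here we use that $\ker(\curl_0\sqrt{\varepsilon}^{-1})^\bot=\overline{\ran}(\sqrt{\varepsilon}^{-1}\curl)$, together with closedness of $\ran(\curl)$ from the Picard--Weber--Weck compactness. The main obstacle is only this bookkeeping of orthogonal complements under the weights.
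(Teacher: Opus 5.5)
Your proposal is correct and follows the paper's proof. The equation is the change of variables from \Cref{sec:wp}, and the condition $(\tilde E(0),\tilde H(0))\in\ker(A)^\bot$ is read off from \Cref{rem:kerA}(a) together with $\ran(\sigma)=L_2(D)^3$ as in \Cref{rem:kerAp}. Your remark that the hypotheses of \Cref{thm:strongstabMax} only yield $C([0,\infty);\dom(A))$, not $C^1$, is justified, and this weaker regularity is all that the subsequent proof uses.
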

\begin{proof}
All the computations have been done already before, see also \cite{Wa26} and \Cref{sec:wp}. The only thing that needs confirmation is the conditions on the initial data. This however has been confirmed in \Cref{rem:kerA} and \Cref{rem:kerAp}.\end{proof}

Next, we aim to show that the conditions of \Cref{thm:sus} are satisfied. 
In the next section, we will show the following theorem.

\begin{theorem}\label{thm:pcrr}
Let $\kappa_0 \colon \ker(\curl_0) \hookrightarrow L_2(\Omega)^3$ be the canonical embedding. Then $\kappa_0^*\sigma\kappa_0$ has closed range.
\end{theorem}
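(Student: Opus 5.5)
The plan is to factor $\kappa_0^*\sigma\kappa_0$ through the restriction map to $D$. The closed range assumption on $\1_D[\grad[H_0^1(\Omega)]]$ then combines with the coercivity of $\Re\tilde\sigma$ to give a lower bound on the orthogonal complement of the kernel.

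\textbf{Identifying $\ker(\curl_0)$.} Since $\Omega$ is a connected, bounded weak Lipschitz domain with connected boundary, the harmonic Dirichlet fields $\ker(\curl_0)\cap\ker(\dive)$ are trivial. This is the classical cohomology statement: their dimension is the number of boundary components minus one. Together with the Helmholtz decomposition $L_2(\Omega)^3=\grad[H_0^1(\Omega)]\oplus\ker(\dive)$, this gives
\[
\ker(\curl_0)=\grad[H_0^1(\Omega)].
\]
I would cite this from the literature, e.g.\ \cite{Pi82,PW22}. I expect this to be the main obstacle, because the topological hypothesis (connected boundary) is needed precisely here and the exact formulation for weak Lipschitz domains has to be quoted carefully.

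\textbf{Factorisation.} Define $J\colon \ker(\curl_0)\to L_2(D)^3$ by $Jx\coloneqq \1_D x$. Since $\sigma=\tilde\sigma\1_D$, we get $\kappa_0^*\sigma\kappa_0=J^*\tilde\sigma J$, where $\tilde\sigma$ is regarded as a multiplication operator on $L_2(D)^3$. By the first step, $\ran(J)=\1_D[\grad[H_0^1(\Omega)]]$, which is closed by assumption. Hence there is $\delta>0$ with $\|Jx\|\geq\delta\|x\|$ for all $x\in\ker(J)^\bot$.

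\textbf{Lower bound.} Put $T\coloneqq\kappa_0^*\sigma\kappa_0$. For $x\in\ker(\curl_0)$,
\[
\|Tx\|\,\|x\|\geq \Re\langle Tx,x\rangle=\Re\langle\tilde\sigma Jx,Jx\rangle\geq c\|Jx\|^2 .
\]
This shows $\ker(T)\subseteq\ker(J)$. The reverse inclusion is immediate, so $\ker(T)=\ker(J)$. Consequently, for $x\in\ker(T)^\bot=\ker(J)^\bot$ we obtain $\|Tx\|\geq c\delta^2\|x\|$. Thus $T$ is bounded below on the orthogonal complement of its kernel. Since $T$ is bounded, this implies that $\ran(T)$ is closed: if $Tx_n\to y$ with $x_n\in\ker(T)^\bot$, then $(x_n)_n$ is Cauchy, and its limit is a preimage of $y$. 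This proves \Cref{thm:pcrr}.
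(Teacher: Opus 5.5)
Your proof is correct. Its outer structure matches the paper's:
\begin{itemize}
\item The paper also identifies $\ker(\curl_0)=\grad[H_0^1(\Omega)]$ by citing \cite{Pi82,PW22}, so this step is a citation rather than a real obstacle.
\item Your lower-bound step, combining $\Re\tilde\sigma\geq c$ with a lower bound for the restriction to $D$ on the complement of its kernel, is exactly the argument of \Cref{thm:variational}, used via \Cref{thm:concrete}.
\end{itemize}

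The genuine difference is how you obtain the decisive estimate $\|q\|\leq \alpha\|\1_D q\|$ on $\ker(J)^\bot$. Note that this space is precisely the space $H_0$ of \Cref{thm:decinequ}.

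The paper obtains the estimate in \Cref{sec:crs} through a construction modelled on \cite{PPTW21}. It splits this space as $H_1\oplus H_2$, with $H_1=\ran(\grad_{D,0})\oplus\mathcal{H}(D)$ supported in $D$. It then shows that restriction $T\colon H_2\to\tilde{H}_3$ is bijective (\Cref{lem:T11}, \Cref{lem:Tonto}) and invokes the closed graph theorem (\Cref{thm:Tinv}, \Cref{thm:decinequ}). The closedness hypothesis \Cref{hyp:ep} enters only through the closedness of $\tilde{H}_3$.

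You observe instead that the estimate is just the open mapping theorem applied to $J|_{\ker(J)^\bot}\colon \ker(J)^\bot\to\ran(J)$. Here $\ran(J)=\1_D[\grad[H_0^1(\Omega)]]$ is closed by that very same hypothesis.

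Your route has several advantages:
\begin{itemize}
\item It is far shorter.
\item It avoids the auxiliary spaces $\mathcal{H}(\omega)$, $H_2$, $H_3$.
\item It does not use the regularity assumptions on $D$ (continuous boundary, $\partial D$ of measure zero) that the paper needs to handle those spaces.
\item It shows the hypothesis is sharp. Since $\ker(T)=\ker(J)$ and $\|Tx\|\leq\|\tilde\sigma\|\,\|Jx\|$, closed range of $\kappa_0^*\sigma\kappa_0$ forces $J$ to be bounded below on $\ker(J)^\bot$. So closedness of $\1_D[\grad[H_0^1(\Omega)]]$ is also necessary.
\end{itemize}

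The paper's construction does give a finer structural picture of $\ker(J)^\bot$: a part living in $D$ plus a part mapped isomorphically onto $\tilde{H}_3$ by restriction. This could help when verifying the hypothesis in concrete geometries, but it is not needed for \Cref{thm:pcrr} itself.
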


With this theorem, we can conclude the proof of \Cref{thm:strongstabMax}.

\begin{proof}[Proof of \Cref{thm:strongstabMax}] At first, we show that the conditions of \Cref{thm:sus} are satisfied for $A$ from \Cref{lem:maxmod}. For this note that the Picard--Weber--Weck selection theorem yields $\mathcal{C}\coloneqq \dom(\curl_0)\cap \ker(\curl_0)^\bot \hookrightarrow L_2(\Omega)^3$ compactly. It is then not difficult to confirm that also $\tilde{\mathcal{C}}\coloneqq \dom(\sqrt{\mu}^{-1}\curl_0\sqrt{\varepsilon}^{-1})\cap \ker(\sqrt{\mu}^{-1}\curl_0\sqrt{\varepsilon}^{-1})^\bot \hookrightarrow L_2(\Omega)^3$ compactly. Indeed, if $(\tilde{E}_n)_n$ is bounded in $\tilde{\mathcal{C}}$. Then $(\sqrt{\varepsilon}^{-1} \tilde{E}_n)_n$ is bounded in $\mathcal{C}$. Thus, it contains an $L_2(\Omega)^3$-convergent subsequence so that the same subsequence of $(\tilde{E}_n)_n$ is also convergent yielding the claim.  As the strict positivity of (the real part of) $\sigma$ on its range is easy to see, $\sigma$ satisfies \Cref{hyp:gammasus}. Condition (a) in \Cref{thm:sus} follows from \Cref{prop:ucp}. Condition (b) is a consequence of \Cref{thm:pcrr}. By \Cref{thm:sus}, we find $f\colon [0,\infty)\to [0,\infty)$ such that for all $(\tilde{E}_0,\tilde{H}_0)\in \dom(A)\cap \ker(A)^\bot$ and $T$, the $C_0$- semi-group generated by $A$,
\[
  \| T(t) (\tilde{E}_0,\tilde{H}_0)\|\leq f(t) \|(\tilde{E}_0,\tilde{H}_0)\|_{\dom(A)}.
\]
Next, let $(E,H)$ be as in \Cref{thm:strongstabMax}. Then, by \Cref{lem:maxmod} it follows that $(\tilde{E},\tilde{H}) \coloneqq (\sqrt{\varepsilon} E,\sqrt{\mu}H) = T(\cdot)(\sqrt{\varepsilon} E(0),\sqrt{\mu}H(0))$. Hence, for all $t\in [0,\infty)$, we obtain
\begin{align*}
 \| (E(t),H(t))\| & \leq \max\{\|\sqrt{\varepsilon}^{-1}\|,\|\sqrt{\mu}^{-1}\|\} \|(\tilde{E}(t),\tilde{H}(t))\| \\
 & \leq  \max\{\|\sqrt{\varepsilon}^{-1}\|,\|\sqrt{\mu}^{-1}\|\} f(t) \|(\tilde{E}(0),\tilde{H}(0))\|_{\dom(A)} \\
 & \leq C  f(t) \|(E(0),H(0))\|_{\dom(A_{\textnormal{Max}})},
\end{align*}
for some $C>0$ independently of $E$ and $H$, establishing the assertion.
\end{proof}
\section{The proof of the final closed range statement}\label{sec:crs}

Before we go into the details of proving the closed range statement, we recall some well-known facts in Hilbert spaces. Note that we already used this fact in  \Cref{rem:kerA} above.
 \begin{remark}\label{rem:Hss}Let $H$ be a Hilbert space and $V\subseteq H$, $U\subseteq H$ be closed subspaces. Then the following statements are true:
 \begin{enumerate}
 \item[(a)] If, in addition, $U\subseteq V$, then $U^{\bot_{V}}=U^{\bot_{H}}\cap V$.
 \item[(b)] $(U\cap V)^\bot = \overline{U^\bot+V^{\bot}}$.
 \end{enumerate}
The proof of (a) follows by mere definition. For the proof of (b) notice that, as $U\cap V\subseteq U,V$, we obtain $U^\bot + V^{\bot} \subseteq (U\cap V)^{\bot}$. As the latter is a closed and linear subspace, $Y\coloneqq \overline{U^\bot + V^{\bot}}\subseteq (U\cap V)^{\bot}$. Next, take $x\in (U\cap V)^{\bot} \cap  \big(U^\bot + V^{\bot}\big)^\bot$. Then,
\[
   \forall y\in Y\colon \langle y,x\rangle = 0.
\]  In particular, $\langle u_{\bot}, x\rangle = \langle v_{\bot}, x\rangle=0$ for all $u_{\bot} \in U^\bot$ and $v_{\bot}\in V^\bot$. Thus, $x\in U^{\bot\bot} = U$ and $x\in V^{\bot\bot} = V$, that is, $x\in U\cap V$. But also $x\in (U\cap V)^\bot$ implying $x=0$. Hence,
\[
   \big(U^\bot + V^{\bot}\big)^{\bot\bot} \subseteq U\cap V, \text{ and therefore }(U\cap V)^\bot = \overline{U^\bot+V^{\bot}},
\]
as desired.
 \end{remark}

We start off with an abstract finding that helps us guiding to the actual statement. 
\begin{theorem}\label{thm:variational} Let $C\colon \dom(C)\subseteq H_0\to H_1$ be closed and densely defined with closed range, $g \in L(H_1)$ satisfy \Cref{hyp:gammasus} with closed range.
Denote by $\kappa_0\colon \ran(C)\hookrightarrow H_1$ the canonical embedding and by $p\in L(H_1)$ the orthogonal projection onto ${\ran}(g)$. If there exists $d>0$ such that for all $u \in \ran(C)\cap \ker(p)^\bot$ (with the orthogonal complement computed in $\ran(C)$)
\[
    \| u\|_{H_1} \leq d \| pu\|_{H_1},
\]
then $\kappa_0^* g \kappa_0$ has closed range.
\end{theorem}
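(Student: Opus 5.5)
The idea is to split $\ran(C)$ into the part where $g$ vanishes and its orthogonal complement. On that complement, the assumed inequality $\|u\|\le d\|pu\|$ together with a coercivity estimate for $\Re g$ in terms of $p$ should make $\kappa_0^*g\kappa_0$ coercive, hence onto. Throughout, write $V\coloneqq\ran(C)$, a closed subspace of $H_1$, and $W\coloneqq V\cap\ker(p)$. Then $V\cap\ker(p)^\bot$ (computed in $V$) is exactly $V\ominus W$, by \Cref{rem:Hss}(a).

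\emph{Step 1 (kernel structure of $g$).} Write $g=\operatorname{diag}(g_U,g_{U^\bot})$ as in \Cref{rem:observgamma}(a). Since $\Re g_U\ge c$, both $g_U$ and $g_U^*$ are injective. Hence
\[
\ker(g)=\ker(g^*)=\{0\}\oplus\ker(g_{U^\bot}).
\]
Also $\ker(p)=\ran(g)^\bot=\ker(g^*)$. So for $w\in W$ we have $gw=0$ and $g^*w=0$. Consequently, for $u=w+u'$ with $w\in W$ and $u'\in V\ominus W$, we get $\kappa_0^*g\kappa_0u=\kappa_0^*g\kappa_0u'$. Moreover $\langle gu',w\rangle=\langle u',g^*w\rangle=0$, so $\kappa_0^*g\kappa_0$ maps $V\ominus W$ into itself. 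Hence
\[
\ran(\kappa_0^*g\kappa_0)=\ran\big(\kappa_0^*g\kappa_0|_{V\ominus W}\big).
\]

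\emph{Step 2 (coercivity of $\Re g$ relative to $p$; the main point).} I will show that there is $\delta>0$ with
\[
\Re\langle gx,x\rangle\ge\delta\|px\|^2\quad\text{for all }x\in H_1.
\]
Since $g_U$ is invertible (its real part is $\ge c$), we have $\ran(g)=U\oplus\ran(g_{U^\bot})$. Because $\ran(g)$ is closed, $\ran(g_{U^\bot})$ is closed as well. As $g_{U^\bot}$ is self-adjoint and non-negative with closed range, it is bounded below on $\ker(g_{U^\bot})^\bot=\ran(g_{U^\bot})$. This gives $\langle g_{U^\bot}y,y\rangle\ge\delta'\|qy\|^2$, where $q$ is the orthogonal projection onto $\ran(g_{U^\bot})$; one can see this via the spectral theorem, since $0$ is isolated in $\sigma(g_{U^\bot})$ or $g_{U^\bot}$ is invertible. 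Therefore
\[
\Re\langle gx,x\rangle\ge c\|x_U\|^2+\delta'\|qx_{U^\bot}\|^2\ge\min\{c,\delta'\}\,\|px\|^2,
\]
since $px=x_U+qx_{U^\bot}$. This is the step where closedness of $\ran(g)$ is genuinely needed.

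\emph{Step 3 (conclusion).} For $u'\in V\ominus W$, Step 2 and the hypothesis give
\[
\Re\langle\kappa_0^*g\kappa_0u',u'\rangle=\Re\langle gu',u'\rangle\ge\delta\|pu'\|^2\ge\frac{\delta}{d^2}\|u'\|^2.
\]
Thus the restriction of $\kappa_0^*g\kappa_0$ to the Hilbert space $V\ominus W$ is a bounded operator with strictly positive real part. It is therefore continuously invertible there, for instance by \cite[Proposition 6.2.3(b)]{STW22}. By Step 1, $\ran(\kappa_0^*g\kappa_0)=V\ominus W$, which is closed.
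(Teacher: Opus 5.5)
Your proof is correct and is essentially the paper's argument. Like you, the paper identifies $\ker(\kappa_0^*g\kappa_0)=\ran(C)\cap\ker(p)$ and chains $\|u\|\le d\|pu\|$ with $\Re\langle gx,x\rangle\ge\delta\|px\|^2$ and Cauchy--Schwarz; it concludes via the closed-range inequality on the orthogonal complement of the kernel, whereas you invert the compression to the reducing subspace $V\ominus W$. Your Step 2 is more careful than the paper, which uses this coercivity with the constant $c$ from \Cref{hyp:gammasus} directly; for general $g_{U^\bot}$ one needs, as you show, the closedness of $\ran(g)$ to obtain some $\delta=\min\{c,\delta'\}>0$.
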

\begin{proof}To establish the claim, we will use the characterisation of closed range by means of the closed inequality (see, e.g., \cite[pp 96]{Gol06}): $\kappa_0^*g\kappa_0$ has closed range if and only if there exists $s>0$ such that
 \[
    \forall u\in \ran(C)\cap \ker(\kappa_0^*g\kappa_0)^\bot\colon \|u\|\leq s\|\kappa_0^*g\kappa_0 u\|.
 \]
The first part of the proof is concerned with reformulating $ \ran(C)\cap \ker(\kappa_0^*g\kappa_0)^\bot$ and the second establishes the inequality.

Thus, let $x\in H_1$. Note that
 \[
   x \in \ker(\kappa_0^* g \kappa_0) \iff \forall y \in \ran(C)\colon \langle g x,y\rangle = 0.
 \]
 In particular,
 \[
   0=  \Re \langle g x,x\rangle \geq c\langle px,x\rangle = c\|px\|^2.
 \]
 Thus, $px =0$. 
 On the other hand, if $x\in \ker(p)\cap \ran(C)$ we obtain for all $y\in \dom(C)$:
 \[
     \langle g \kappa_0 x,\kappa_0 C y \rangle =      \langle g  x, C y \rangle = \langle  p g p x, C y \rangle =0,
 \]that is $x\in \ker(\kappa_0^*g\kappa_0)$.
 
For the closed range inequality, we may now let $\phi \in \ran(C)\cap \ker(p)^{\bot}$. Then
 \[
 \|\phi\|^2 \leq d \langle p \kappa_0 \phi, p\kappa_0 \phi\rangle=    d \langle p \kappa_0 \phi, \kappa_0 \phi\rangle \leq d\frac{1}{c} \Re  \langle g \kappa_0 \phi, \kappa_0 \phi\rangle \leq d\frac{1}{c}  \|\kappa_0^*g \kappa_0 \phi\|\|\phi\|,
 \]which yields the claim.
 \end{proof}

 Next, we come to the (more general than in the previous section) geometric set-up that will be used throughout this section. Let $\Omega\subseteq \R^d$ be open lying in a slab and assume that
 \[D\subseteq \Omega \text{ is open with having continuous boundary (as a subset of $\R^d$)}\]
and such that $\partial D$ has zero $d$-dimensional Lebesgue measure. As a consequence, \[
L_2(\Omega)=L_2(D)\oplus L_2(D^{\co}), \text{ where }
D^{\co}\coloneqq \Omega\setminus \overline{D}.
\] Let $\tilde{\sigma} \in L_\infty(D)^{3\times 3}$ be with $\Re \tilde{\sigma}$ being strictly positive definite uniformly on $D$ and denote by $\sigma$ its zero-extension to the whole of $\Omega$. Define $\grad_0 \colon H_0^1(\Omega)\subseteq L_2(\Omega) \to L_2(\Omega)^d$ the usual gradient with Dirichlet boundary conditions. By Poincar\'e's inequality, as $\Omega$ is contained in a slab, $\ran(\grad_0)\subseteq L_2(\Omega)^3$ is closed. Next, we provide some space decompositions needed in the following.
 
 \begin{proposition}\label{prop:odecomp} Let $\omega\subseteq \Omega$ be open. Then
 \[
     \ran(\grad_0)\cap L_2(\omega )^d = \grad[H_0^1(\omega)]\oplus \mathcal{H}(\omega).
 \] 
for some closed subspace $\mathcal{H}(\omega)\subseteq \ker(\dive_\omega)\subseteq L_2(\omega)^d$
 \end{proposition}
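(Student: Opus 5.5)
We interpret $L_2(\omega)^d$ as the closed subspace of $L_2(\Omega)^d$ of fields vanishing a.e.\ outside $\omega$ (zero extension), and $\dive_\omega = -(\grad_{0,\omega})^*$, where $\grad_{0,\omega}\colon H_0^1(\omega)\subseteq L_2(\omega)\to L_2(\omega)^d$. The plan is to realise the decomposition as an orthogonal decomposition inside the closed subspace $X\coloneqq \ran(\grad_0)\cap L_2(\omega)^d$ of $L_2(\Omega)^d$. It is closed because $\ran(\grad_0)$ is closed by Poincar\'e's inequality (as $\Omega$ lies in a slab) and $L_2(\omega)^d$ is closed. Then we set $\mathcal{H}(\omega)\coloneqq \grad[H_0^1(\omega)]^{\bot_X}$, that is, the orthogonal complement of $\grad[H_0^1(\omega)]$ computed in $X$. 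By \Cref{rem:Hss}(a) this equals $\grad[H_0^1(\omega)]^\bot\cap X$, which is closed.

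First I check that $\grad[H_0^1(\omega)]\subseteq X$ and that it is closed. Any $\phi\in H_0^1(\omega)$ extended by zero lies in $H_0^1(\Omega)$: approximate $\phi$ by $\Cc(\omega)$ functions and extend those by zero. The gradient of the extension is the zero extension of $\grad\phi$. Hence $\grad\phi\in \ran(\grad_0)$, and it is supported in $\omega$, so $\grad\phi\in X$. For closedness, $\omega\subseteq\Omega$ also lies in a slab, so Poincar\'e's inequality holds on $H_0^1(\omega)$. It gives $\|\phi\|_{L_2(\omega)}\leq c\|\grad\phi\|$, and the standard argument then shows that $\grad[H_0^1(\omega)]$ is closed in $L_2(\omega)^d$, hence in $X$.

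With closedness in hand, the projection theorem in the Hilbert space $X$ yields
\[
X=\grad[H_0^1(\omega)]\oplus \mathcal{H}(\omega).
\]
It remains to show $\mathcal{H}(\omega)\subseteq\ker(\dive_\omega)$. Let $h\in\mathcal{H}(\omega)$. Then $h\in L_2(\omega)^d$, and for all $\phi\in H_0^1(\omega)$ we have $\langle h,\grad\phi\rangle_{L_2(\omega)^d}=0$. This says exactly that $h\in \dom(\grad_{0,\omega}^*)$ with $\grad_{0,\omega}^*h=0$, so $\dive_\omega h=0$.

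I expect no real obstacle. The only delicate points are the identification of $L_2(\omega)^d$ with the zero-extended subspace and the fact that zero extension maps $H_0^1(\omega)$ into $H_0^1(\Omega)$; both follow from density of $\Cc(\omega)$. No regularity of $\omega$ is needed. The content of the proposition is that $\mathcal{H}(\omega)$ may be nontrivial: it consists of gradients of $H_0^1(\Omega)$-functions that vanish outside $\omega$ but need not have zero trace on $\partial\omega\cap\Omega$, for instance gradients of functions locally constant on components of $\Omega\setminus\omega$.
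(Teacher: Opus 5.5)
Your proof is correct and follows the same route as the paper. You establish closedness of $\ran(\grad_0)\cap L_2(\omega)^d$, closedness of $\grad[H_0^1(\omega)]$ via Poincar\'e's inequality on the slab, and the inclusion via zero extension. You then take $\mathcal{H}(\omega)$ as the orthogonal complement inside that space and read off $\mathcal{H}(\omega)\subseteq\ker(\dive_\omega)$ from $\ran(\grad_{\omega,0})^\bot=\ker(\dive_\omega)$. You only spell out a little more detail, namely the density argument for zero extension and the adjoint characterisation.
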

 \begin{proof}
 As $\ran(\grad_0)$ is a closed subspace in $L_2(\Omega)^d$ and, by zero extension, so $L_2(\omega )^d\subseteq L_2(\Omega)^d$ is closed. Since $\Omega$ is contained in a slab, so is $\omega$ and, hence, by Poincar\'e's inequality, $\grad[H_0^1(\omega)]$ is closed. Moreover, since $H_0^1(\omega)\subseteq H_0^1(\Omega)$ (by zero extension) and $\grad\phi \in L_2(\omega)^d$ for every $\phi\in H_0^1(\omega)$, we infer that $\grad[H_0^1(\omega)]\subseteq      \ran(\grad_0)\cap L_2(\omega )^d $. Thus, $\mathcal{H}(\omega)$ is then the $L_2(\Omega)^d$-orthogonal complement of $\grad[H_0^1(\omega)]$ in $  \ran(\grad_0)\cap L_2(\omega )^d$. In particular, it consists of vector fields supported on $\omega$ only, establishing the assertion as $\ran(\grad_{\omega,0})^\bot=\ker(\dive_{\omega})$.
 \end{proof}
 From the proof of \Cref{prop:odecomp} we single out the following definition. For any open $\omega\subseteq \Omega$ we put
 \[
    \mathcal{H}(\omega) \coloneqq \grad[H_0^1(\omega)]^{\bot_{L_2(\omega)^d}}\cap \ran(\grad_0) \subseteq \ker(\dive_\omega)\subseteq L_2(\omega)^d.
 \]
 \begin{remark} Note that $\mathcal{H}(\omega)$ consists of gradient fields the divergence of which vanishes on $\omega$. Thus, the corresponding potential is \emph{harmonic} on $\omega$. If $d=3$, the vector field itself is also \emph{harmonic} as it belongs to the intersection of $\ker(\curl_\omega)$ and $\ker(\dive_\omega)$.
 \end{remark}
 The application of the present particular setting in the context of \Cref{thm:variational} is provided next. We recall $D^{\co}\coloneqq \Omega\setminus \overline{D}$.
 \begin{theorem}\label{thm:concrete} Denote by $\kappa_0\colon \ran(\grad_0)\hookrightarrow L_2(\Omega)^d$ the canonical embedding. If there exists $\alpha >0$ such that for all $q\in \ran(\grad_0)\cap \ker(\dive_{D^{\co}})\cap \mathcal{H}(D^{\co})^{\bot}$
 \[
    \| q\|_{L_2(\Omega)^d} \leq \alpha \|q\|_{L_2(D)^d},
 \]then $\kappa_0^*\sigma \kappa_0$ has closed range.
 \end{theorem}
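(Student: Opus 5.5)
The plan is to derive the statement from \Cref{thm:variational}, taking $C=\grad_0$ there (so that $\ran(C)=\ran(\grad_0)$ is closed by Poincar\'e's inequality, and $\kappa_0$ is the embedding of the statement) and $g=\sigma$. Then $g$ satisfies \Cref{hyp:gammasus} with $U=L_2(D)^d$, since $\sigma$ acts as $\tilde\sigma$ on $L_2(D)^d$ and as $0$ on $L_2(D^{\co})^d$, and $L_2(\Omega)=L_2(D)\oplus L_2(D^{\co})$. The range of $\sigma$ is exactly $L_2(D)^d$: $\Re\tilde\sigma\geq c$ makes multiplication by $\tilde\sigma$ invertible on $L_2(D)^d$ (\Cref{rem:observgamma}-type argument). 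So $g$ has closed range, and $p$ is multiplication by $\1_D$. It then remains to verify the inequality $\|u\|\leq d\|pu\|$ for all $u\in\ran(\grad_0)\cap\ker(p)^{\bot}$, where the complement is taken inside $\ran(\grad_0)$.

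First I identify $\ker(p)\cap\ran(\grad_0)=\ran(\grad_0)\cap L_2(D^{\co})^d$. By \Cref{prop:odecomp} with $\omega=D^{\co}$, this equals $\grad[H_0^1(D^{\co})]\oplus\mathcal{H}(D^{\co})$. By \Cref{rem:Hss}(a), its orthogonal complement within $\ran(\grad_0)$ is
\[\ran(\grad_0)\cap\grad[H_0^1(D^{\co})]^{\bot}\cap\mathcal{H}(D^{\co})^{\bot}.\]
Next I show that $q\in\ran(\grad_0)$ with $q\perp\grad[H_0^1(D^{\co})]$ (orthogonality in $L_2(\Omega)^d$) means precisely that $\langle \1_{D^{\co}}q,\grad\phi\rangle=0$ for all $\phi\in H_0^1(D^{\co})$. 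Since gradients of $H^1_0(D^{\co})$ functions vanish outside $D^{\co}$, this is $q|_{D^{\co}}\in\ker(\dive_{D^{\co}})$. Hence $\ker(p)^\bot\cap\ran(\grad_0)$ coincides with $\ran(\grad_0)\cap\ker(\dive_{D^{\co}})\cap\mathcal{H}(D^{\co})^\bot$, which is the set in the hypothesis of the theorem.

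On this set the assumed estimate $\|q\|_{L_2(\Omega)^d}\leq\alpha\|q\|_{L_2(D)^d}=\alpha\|pq\|$ is precisely the inequality required in \Cref{thm:variational}, with $d=\alpha$. The theorem then yields that $\kappa_0^*\sigma\kappa_0$ has closed range.

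The main obstacle is the bookkeeping of orthogonal complements: complements in $\ran(\grad_0)$ versus in $L_2(\Omega)^d$ versus in $L_2(D^{\co})^d$. I must check that $\grad[H_0^1(D^{\co})]$ and $\mathcal{H}(D^{\co})$ are closed subspaces of $\ran(\grad_0)$, which holds by \Cref{prop:odecomp}, so that \Cref{rem:Hss}(a) applies. I must also check that the divergence-free condition is read correctly as a distributional condition on $D^{\co}$. The remaining steps are direct substitutions.
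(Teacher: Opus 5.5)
Your proposal is correct and is essentially the paper's proof: you apply \Cref{thm:variational} with $C=\grad_0$, $g=\sigma$ and $p=\1_D$, and you identify the orthogonal complement of $\ran(\grad_0)\cap\ker(p)=\ran(\grad_0)\cap L_2(D^{\co})^d$ inside $\ran(\grad_0)$ via \Cref{prop:odecomp}. You also make explicit some points the paper leaves implicit, namely that $\ran(\sigma)=L_2(D)^d$ is closed, the use of \Cref{rem:Hss}(a), and that $\ker(\dive_{D^{\co}})$ is to be read as a condition on $q|_{D^{\co}}$.
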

 \begin{proof}
 It is clear that $\1_{D}$, the multiplication by the characteristic function of the set $D$, is the orthogonal projection $p$ in \Cref{thm:variational} when setting $C=\grad_0$, $g=\sigma$. Thus, it suffices to establish
 \begin{multline*}
    (\ran(C)\cap \ker(p))^\bot = (\ran(\grad_0)\cap L_2(D^{\co})^d)^\bot\\ = \big(\grad[H_0^1(D^{\co})]\oplus \mathcal{H}(D^{\co})\big)^{\bot} =\ker(\dive_{D^{\co}})\cap \mathcal{H}(D^{\co})^{\bot},
 \end{multline*}
 where $\dive_{D^{\co}}$ is the adjoint of $\grad$ restricted to $H_0^1(D^{\co})$ as an operator in $L_2(D^{\co})$,
 which follows from \Cref{prop:odecomp}.
 \end{proof}
 
 \begin{remark}
 Note that the techniques we process in the following to arrive at the decisive estimate (see \Cref{thm:decinequ}) are heavily inspired by the rationale proving the decisive estimate in \cite{PPTW21}. Here we were able to remove some distractions in \cite{PPTW21} related to the willingness of obtaining more explicit formulas and the three-dimensional setting in that reference that conveniently allowed for such computations in the first place. Moreover, some geometric conditions could be lifted and replaced by \Cref{hyp:ep} below.
 \end{remark}
In order to prove the desired closed range statement, we assume that $D$ is a relative extension domain inside $\Omega$ in the following sense. For convenience, we recall the assumptions on $\Omega$ and $D$ also in this hypothesis.
\begin{hypothesis}\label{hyp:ep} Let $\Omega\subseteq \R^d$ be open and contained in a slab and $D\subseteq \Omega$ open with continuous boundary. $\partial D$ is supposed to have $d$-dimensional Lebesgue measure $0$. Assume that 
$\1_{D}[\ran(\grad_0)] \subseteq L_2(D)^d$ is a closed subspace.
\end{hypothesis}
We assume \Cref{hyp:ep} to be in effect up until the end of this section.
\begin{remark}
(a) Let $K \subseteq H^1(D)$ be a closed subspace. Then $\grad[K]\subseteq L_2(D)^d$ is closed. Indeed, since $D$ has continuous boundary, $H^1(D)$ embeds compactly into $L_2(D)$ and thus so does $K$. It follows that $\grad$ restricted to $K$ has closed range.

 (b) If $D$ is compactly contained in $\Omega$ and an \textbf{extension domain}, that is, there exists a continuous extension operator $E\colon H^1(D)\to H^1(\R^d)$ in the way that $Eu|_D=u$ for all $u\in H^1(D)$. Then, for any smooth $\phi \in C_c^\infty(\Omega)$ such that $\phi=1$ on $\overline{D}$, we get that $E_0 \coloneqq \phi E$ is still an extension operator mapping into $H_0^1(\Omega)$. This shows that $\1_{D}[\ran(\grad_0)] = \grad[H^1(D)]$, which is closed by (a).
 
(b) For the next two statements, we introduce
\[
    H^1_{\partial} (D) \coloneqq \overline{\{\phi|_{D}; \phi \in C_c^\infty(\R^d), \dist(\spt\phi,\partial D\cap \partial\Omega)>0 \}}^{H^1(D)}.
\]
If $\Omega$ is Lipschitz then, using \cite[Theorem 7.10]{ACSVV15} where it is shown for the $C^1$-case,
\[
  H^1_0(\Omega) = \{ \phi \in H^1(\Omega); \phi = 0 \text{ on }\partial \Omega\}.
\]
Similarly, if $\partial \Omega\cap \partial D$ is regular enough, then
\[
   H^1_{\partial}(D)  = \{ \phi \in H^1(D); \phi = 0 \text{ on }\partial \Omega\cap \partial D \}.
\]
Note that as $H^1_{\partial}(D)$ is a closed subspace of $H^1(D)$, we get $\grad[H^1_{\partial}(D)]$ is closed by (a).

(c) Let $\partial \Omega\subseteq \partial D$ be a connected and closed subset and both $D$ and $\Omega$ be Lipschitz. Then, 
any $\phi\in H^1_{\partial}(D)$ admits an extension $u$ to $H^1(\R^d)$. Since, by (b),
\[
   H^1_0(\Omega) = \{ u \in H^1(\Omega); u=0 \text{ on }\partial\Omega\},
\]
$u|_{\Omega} \in H_0^1(\Omega)$. Thus,
\[
   \1_{D}[\grad [H_0^1(\Omega)]] = \grad [H^1_{\partial\Omega}(D)].
\]
The latter is closed by (b).

(d) Assume that $\partial D\cap \partial\Omega$ is Lipschitz. Then both conditions (b) and (c) can be summarised as follows. Assume that any $u\in H^1_{\partial}(D)$ admits an extension to $H^1_0(\Omega)$. Then
\[
   \1_{D}\ran(\grad_0) = \grad[H^1_{\partial}(D)]
\]
and the latter space is closed by (b). Indeed, let $u\in H^1_{\partial}(D)$. Then, by assumption, we find $v\in H_0^1(\Omega)$ such that $v|_{D}=u$ and $\grad u =\1_{D}\grad v$. On the other hand, if $v\in H^1_0(\Omega)$, then $u\coloneqq \1_{D} v \in H^1(D)$ and $u = 0 $ on $\partial D\cap \partial\Omega$, which by (b) implies that $u \in H^1_\partial(D)$ and hence, $\1_D\grad v = \grad u \in \grad[H^1_\partial(D)]$, as claimed.
\end{remark}
We introduce the following spaces
\begin{align*}
  H_2 &\coloneqq \ran(\grad_0)\cap \big(\ker(\dive_{D})\cap \mathcal{H}(D)^{\bot_{L_2(D)^d}}\oplus \ker(\dive_{D^{\co}})\cap \mathcal{H}(D^{\co})^{\bot_{L_2(D^{\co})^d}}\big);\\
  H_3& \coloneqq \grad_{D} [\ker(\dive_{D}\grad_{D})\cap \{\phi; \grad_{D} \in \mathcal{H}(D)^\bot\}].
\end{align*}
Our strategy of proof is aligned with the arguments in \cite{PPTW21}. We start off with our version of \cite[Lemma 4.17]{PPTW21}.
\begin{proposition}\label{prop:H3closed} $H_3\subseteq L_2(D)^d$ is a closed subspace and for $U\in H_2$ we have $\1_{D} U\in H_3$. Moreover, $\1_D U \in \1_D\ran(\grad_0)$.
\end{proposition}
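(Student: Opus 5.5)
The statement has three parts: $H_3$ is closed, $\1_D U\in H_3$ for $U\in H_2$, and $\1_D U\in \1_D\ran(\grad_0)$. The last part is immediate. Since $U\in H_2\subseteq \ran(\grad_0)$, we may write $U=\grad\phi$ with $\phi\in H_0^1(\Omega)$, and then $\1_D U=\1_D\grad\phi\in \1_D[\ran(\grad_0)]$ by definition. So the plan concentrates on the first two claims. Throughout, $H_3$ is read as the set of gradients $\grad_D\phi$ with $\phi\in H^1(D)$, $\dive_D\grad_D\phi=0$ (distributionally harmonic in $D$), and $\grad_D\phi\in \mathcal{H}(D)^{\bot}$, the complement taken in $L_2(D)^d$.

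\textbf{Closedness of $H_3$.} Rewrite $H_3$ as an intersection of closed subspaces of $L_2(D)^d$:
\[
H_3=\grad[H^1(D)]\cap \ker(\dive_D)\cap \mathcal{H}(D)^{\bot_{L_2(D)^d}}.
\]
Here $\ker(\dive_D)=\grad[H_0^1(D)]^{\bot}$ is closed as an orthogonal complement, and $\mathcal{H}(D)^\bot$ is closed for the same reason. The remaining task is closedness of $\grad[H^1(D)]$, and this is where the continuous boundary of $D$ enters, via the Remark after \Cref{hyp:ep}(a). By that remark, $H^1(D)\hookrightarrow L_2(D)$ compactly, so $\grad$ restricted to $H^1(D)$ has closed range. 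This is the standard contradiction argument: modulo the locally constant functions, a Poincaré-type inequality holds. One caveat: if $D$ is unbounded, compactness may fail. In that case I would instead work with $\grad[H^1(D)]\cap \overline{\1_D\ran(\grad_0)}$ and use the closedness assumed in \Cref{hyp:ep}. In fact it is cleaner to define things via $\1_D[\ran(\grad_0)]$, which is closed by hypothesis, and to show $H_3$ equals its intersection with $\ker(\dive_D)\cap\mathcal{H}(D)^\bot$. I expect this identification to be the main obstacle. Concretely, one has to show that an element of $H_3$, which is a priori only a gradient of some $H^1(D)$ function, actually lies in $\1_D\ran(\grad_0)$, or at least that the intersection with the closed hypothesis space gives the same set. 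I would first try to argue that the $\mathcal{H}(D)^\bot$-condition forces this, and otherwise fall back on intersecting with $\1_D[\ran(\grad_0)]$ in the definition.

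\textbf{$\1_D U\in H_3$.} Let $U=\grad\phi\in H_2$ with $\phi\in H^1_0(\Omega)$. By the definition of $H_2$, $U$ splits as $U=U_D+U_{D^\co}$ with $U_D\in \ker(\dive_D)\cap\mathcal{H}(D)^\bot$ and $U_{D^\co}$ supported in $D^\co$; here we use that $\partial D$ is a null set, so $L_2(\Omega)=L_2(D)\oplus L_2(D^\co)$. Hence $\1_D U=U_D$. Moreover, $\1_D U=\grad(\phi|_D)$ with $\phi|_D\in H^1(D)$. Since $\dive_D\grad_D(\phi|_D)=\dive_D U_D=0$, the function $\phi|_D$ is harmonic in $D$. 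Finally, $\grad_D(\phi|_D)=U_D\in\mathcal{H}(D)^\bot$. Therefore $\1_D U\in H_3$, and combined with the first paragraph, $\1_D U\in H_3\cap \1_D\ran(\grad_0)$.
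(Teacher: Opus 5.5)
Your proof is correct and follows the paper's argument. Closedness of $H_3$ comes from closedness of $\grad[H^1(D)]$, obtained via the compact embedding $H^1(D)\hookrightarrow L_2(D)$ from the Remark after \Cref{hyp:ep}, together with closedness of $\ker(\dive_D)$ and $\mathcal{H}(D)^\bot$; the membership claims come from writing $U=\grad_0 v$ with $v\in H_0^1(\Omega)$, restricting $v$ to $D$, and using the definition of $H_2$. Your caveat about unbounded $D$ is legitimate, since that compact embedding genuinely needs boundedness, but it affects the paper's proof in exactly the same way and is harmless for the bounded domains of the Maxwell application, so the fallback you sketch is not needed (it would in effect change the definition of $H_3$ rather than prove the statement).
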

\begin{proof}
Clearly, $H_3\subseteq L_2(D)^d$. Let $(\phi_n)_n$ in $\ker(\dive_{D}\grad_{D})\cap \{\phi; \grad_{D} \in \mathcal{H}(D)^\bot\}$ such that
 \[
    \grad_D \phi_n \to \psi \in L_2(D)^d.
 \]
 As $\ran(\grad_D)$ is closed, we find $\phi \in H^1(D)$ such that $\psi = \grad_D \phi$. Next, $\grad_D \phi_n \in \ker(\dive_D)$ implying $\grad_D\phi \in \ker(\dive_D)$ as $\ker(\dive_D)$ is closed; that is, $\phi \in \ker(\dive_D\grad_D)$. Similarly, as $\grad_D \phi_n \in \mathcal{H}(D)^\bot$, we infer $\grad_D \phi \in  \mathcal{H}(D)^\bot$, finally establishing closedness of $H_3$. 
 
 Next, let $U\in H_2$. Then, as $H_2\subseteq \ran(\grad_0)$, we find $v\in H^1_0(\Omega)$ such that $U=\grad_0 v$. In particular, $v|_{D} \in H^1(D)$  and we obtain $\grad (v|_{D}) = \1_D \grad_0 v = \1_D U$. By the definition of $H_2$, we obtain the claim.
\end{proof}
We introduce the (well-defined) operator
\[
   T\colon H_2\to \tilde{H}_3, U\mapsto \1_D U
\]
$\tilde{H}_3\coloneqq H_3\cap \1_D[\ran(\grad_0)]$. By \Cref{hyp:ep} and \Cref{prop:H3closed}, $\tilde{H}_3$ is a closed subspace. The next statement is our version of \cite[Lemma 4.18]{PPTW21}.
\begin{lemma}\label{lem:T11} $T$ is one-to-one.
\end{lemma}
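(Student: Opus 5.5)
Since $T$ is linear, it suffices to show that $U\in H_2$ with $\1_D U=0$ forces $U=0$. Then $U$ is supported in $\overline{D^{\co}}$. Because $\partial D$ is a Lebesgue null set, $U\in L_2(D^{\co})^d$. The plan is to exploit the two pieces of structure built into $H_2$. First, $U\in\ran(\grad_0)$. Second, the restriction of $U$ to $D^{\co}$ lies in $\ker(\dive_{D^{\co}})\cap\mathcal{H}(D^{\co})^{\bot_{L_2(D^{\co})^d}}$.

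Step one uses only the first piece. From $U\in\ran(\grad_0)\cap L_2(D^{\co})^d$ and \Cref{prop:odecomp} with $\omega=D^{\co}$, we write
\[
U=\grad\phi+h,\qquad \phi\in H_0^1(D^{\co}),\ h\in\mathcal{H}(D^{\co}).
\]

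Step two uses the second piece to kill both summands:
\begin{itemize}
\item Since $U\in\mathcal{H}(D^{\co})^{\bot}$ and $\grad\phi\perp\mathcal{H}(D^{\co})$ by construction, pairing with $h$ gives $0=\langle U,h\rangle=\|h\|^2$, so $h=0$.
\item Hence $U=\grad\phi$ with $\phi\in H_0^1(D^{\co})$, while also $U\in\ker(\dive_{D^{\co}})=\grad[H_0^1(D^{\co})]^{\bot}$. Pairing with $\grad\phi$ itself gives $\|U\|^2=\langle U,\grad\phi\rangle=0$.
\end{itemize}
Therefore $U=0$, and $T$ is one-to-one.

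The main point requiring care is bookkeeping rather than analysis. The orthogonal complements in the definition of $H_2$ are taken inside $L_2(D^{\co})^d$, and we must check that the component of $U$ on $D^{\co}$ is exactly $U$ once $\1_DU=0$. This uses the null-measure hypothesis on $\partial D$ from \Cref{hyp:ep}. We must also confirm that $\dive_{D^{\co}}$ is the adjoint of $\grad$ on $H_0^1(D^{\co})$, so that its kernel is precisely $\grad[H_0^1(D^{\co})]^{\bot}$, as used in the proof of \Cref{prop:odecomp}. Once these identifications are in place, the argument is just the two orthogonality computations above. Neither \Cref{prop:H3closed} nor the closedness assumption in \Cref{hyp:ep} is needed at this stage; those become relevant only later, when surjectivity or boundedness of $T^{-1}$ is addressed.
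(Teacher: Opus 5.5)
Your proof is correct and follows the paper's argument. You decompose $U$ via \Cref{prop:odecomp} on $D^{\co}$, kill the $\mathcal{H}(D^{\co})$-component using the orthogonality built into $H_2$, and then eliminate the remaining gradient $\grad_{D^{\co},0}u\in\ker(\dive_{D^{\co}})$. The only difference is cosmetic: the paper finishes via continuous invertibility of $\dive_{D^{\co}}\grad_{D^{\co},0}$, whereas you use $\ran(\grad_{D^{\co},0})\cap\ran(\grad_{D^{\co},0})^{\bot}=\{0\}$ directly. Your version is slightly more elementary, since it does not need Poincar\'e's inequality.
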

\begin{proof}
 Let $U\in \ker(T)$. Then $U=0$ on $\1_D$. As $U\in \ran(\grad_0)$, we find $\psi\in  \mathcal{H}(D^{\co})$ and $u\in H_0^1(D^{\co})$ such that
 \[
     U = \psi + \grad_{D^{\co},0} u.
 \]
 Since $U\in H_2$, we have $\psi =0$. Hence,
 \[
    U =\grad_{D^{\co},0}u \in \ker(\dive_{D^{\co}}),
 \]
 that is, $U=0$ on $D^{\co}$ by continuous invertibility of $\dive_{D^{\co}}\grad_{D^{\co},0}$.
\end{proof}
Our version of \cite[Lemma 4.19]{PPTW21} reads as follows.
\begin{lemma}\label{lem:Tonto} $T$ is onto.
\end{lemma}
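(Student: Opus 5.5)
Given $w\in\tilde H_3 = H_3\cap \1_D[\ran(\grad_0)]$, the plan is to build $U\in H_2$ with $\1_D U = w$ by gluing: keep $w$ on $D$ and add a suitable correction on $D^{\co}$.

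\emph{Step 1 (a first extension).} Since $w\in \1_D[\ran(\grad_0)]$, we find $v\in H_0^1(\Omega)$ with $\1_D\grad_0 v = w$. The field $\grad_0 v$ need not lie in $H_2$: on $D^{\co}$ it is arbitrary. We correct it only on $D^{\co}$ without altering the trace data on the interface. Decompose $\1_{D^{\co}}\grad_0 v$ according to the orthogonal splitting of $L_2(D^{\co})^d$ into $\grad[H_0^1(D^{\co})]\oplus\ker(\dive_{D^{\co}})$, and write $\1_{D^{\co}}\grad_0 v=\grad_{D^{\co},0} z + r$ with $z\in H_0^1(D^{\co})$ and $r\in\ker(\dive_{D^{\co}})$. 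Here $z$ is obtained as the solution of the Dirichlet problem $\dive_{D^{\co}}\grad_{D^{\co},0} z = \dive_{D^{\co}}(\1_{D^{\co}}\grad_0 v)$, which is uniquely solvable because $D^{\co}$ lies in a slab (Poincar\'e). Now set $U_1\coloneqq \grad_0(v - \tilde z)$, where $\tilde z$ is the zero extension of $z$ to $\Omega$. Then $\tilde z\in H_0^1(\Omega)$, $\1_D U_1 = w$, $U_1\in\ran(\grad_0)$, and $\1_{D^{\co}}U_1 = r \in\ker(\dive_{D^{\co}})$.

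\emph{Step 2 (removing the harmonic part on $D^{\co}$).} By construction, $\1_{D^{\co}}U_1\in \ran(\grad_0)\cap\ldots$ is only divergence free. We still need orthogonality to $\mathcal H(D^{\co})$. Let $h$ be the orthogonal projection of $\1_{D^{\co}}U_1$ onto the closed subspace $\mathcal H(D^{\co})$, and put $U\coloneqq U_1 - h$. Since $\mathcal H(D^{\co})\subseteq \ran(\grad_0)\cap L_2(D^{\co})^d$ by \Cref{prop:odecomp}, we get $U\in\ran(\grad_0)$ and $\1_D U = w$. Moreover, $h\in\ker(\dive_{D^{\co}})$, so $\1_{D^{\co}}U\in \ker(\dive_{D^{\co}})\cap\mathcal H(D^{\co})^{\bot}$.

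\emph{Step 3 (checking the $D$-part).} On $D$ we have $\1_D U=w\in H_3$, so $w=\grad_D\phi$ with $\dive_D\grad_D\phi=0$ and $w\perp \mathcal H(D)$. Hence $\1_D U\in\ker(\dive_D)\cap\mathcal H(D)^{\bot}$. Together with Step 2, this gives $U\in H_2$ and $TU=w$.

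\emph{Expected obstacle.} The delicate point is interpreting $\ker(\dive_D)$ and $\ker(\dive_{D^{\co}})$ consistently. They must be the adjoints of $\grad$ on $H_0^1$ (weak divergence-free in the interior, with no boundary condition), so that the orthogonal Helmholtz splitting in Step 1 is legitimate. We must also ensure that subtracting $\tilde z$ and $h$ does not leave $\ran(\grad_0)$. For $\tilde z$ this is the zero-extension property $H_0^1(D^{\co})\subseteq H_0^1(\Omega)$. For $h$ it uses that the definition of $\mathcal H(D^{\co})$ already sits inside $\ran(\grad_0)$. Once these identifications are in place, the remaining steps are direct.
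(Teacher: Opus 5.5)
Your proposal is correct and is essentially the paper's proof. The paper's correction $\theta\in H_0^1(D^{\co})$ with $\grad_{D^{\co},0}\theta=-\pi_{\co}\1_{D^{\co}}\grad_0\phi_0$ is exactly your $-\tilde z$, and its $U$ (the projection of $\grad_0\phi$ onto $\mathcal H(D^{\co})^{\bot}$ within $\ran(\grad_0)$) coincides with your $U_1-h$, because $\mathcal H(D^{\co})$ is supported in $D^{\co}$ and contained in $\ran(\grad_0)$; the only cosmetic point is that your Dirichlet problem for $z$ should be read weakly, i.e.\ as the orthogonal projection onto $\grad[H_0^1(D^{\co})]$, since $\1_{D^{\co}}\grad_0 v$ need not lie in $\dom(\dive_{D^{\co}})$.
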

\begin{proof} Sei $W\in \tilde{H}_3$; that is, $w\in \ker(\dive_D)\cap \mathcal{H}(D)^\bot$ and we find $w\in H^1(D)$ such that
\[
    W = \grad_D w
\]
As $W\in  \tilde{H}_3$ there exists $\phi_0 \in H^1_0(\Omega)$
\[
 w = \1_{D}\phi_0.
\]
In particular, we get
\[
\grad_D w = \1_D\grad_0\phi_0.
\]
Let $\pi_{\co}$ be the orthogonal projection onto $\ran(\grad_{D^{\co},0})\subseteq L_2(D^{\co})^d$. By definition, there exists $\theta \in H^1_0(D^{\co})$ such that
\[
   \grad_{D^{\co},0}\theta = -\pi_{\co} \1_{D^{\co}}\grad_0 \phi_{0}
\]
Define
\[
  {\phi} \coloneqq \phi_0 +\theta \in H^1_0(\Omega).
\]
Then
\begin{align*}
  \1_{D^{\co}} \grad_0 \phi &  = \1_{D^{\co}}\grad_0 \phi_0 +\grad_{D^{\co},0}\theta \\
  & = \1_{D^{\co}}\grad_0 \phi_0 -\pi_{\co} \1_{D^{\co}}\grad_0 \phi_{0} \\
  & = (1-\pi_{\co} ) \1_{D^{\co}}\grad_0 \phi_0 \in \ker(\dive_{D^{\co}}).
  \end{align*}
  Next, 
  \[
      \mathcal{H}(D^{\co}) \subseteq \ran(\grad_0)
  \]
  by construction (see \Cref{prop:odecomp}) is a closed subspace. 
  Hence, define
  \[
     U\coloneqq \pi_{ \mathcal{H}(D^{\co})^{\bot(\ran(\grad_0))}} \grad_0 \phi.
  \]
  Hence, 
  \[
      U \in  \mathcal{H}(D^{\co})^{\bot(\ran(\grad_0))} =  \mathcal{H}(D^{\co})^{\bot(L_2(\Omega)^d)}\cap \ran(\grad_0) = ( \mathcal{H}(D^{\co})^{\bot}\oplus L_2(D)^d)\cap \ran(\grad_0).
  \]
  Next,
  \[
    U -\grad_0 \phi \in  \mathcal{H}(D^{\co}) \subseteq \ker(\dive_{D^{\co}})
  \]
  Thus, $U-\grad_0\phi =0$ on $D$ and
  \[
   \1_{D^{\co}} U =   \1_{D^{\co}}(U-\grad_0\phi) +   \1_{D^{\co}}\grad_0\phi \subseteq \ker(\dive_{D^{\co}}).
  \]
  Next,
  \begin{align*}
    \1_{D} U & = \1_{D} \grad_0 \phi \\
    & = \1_{D} \grad_0 \phi_0 + \1_{D} \grad_0 \theta \\
    & = \1_{D} \grad_0 \phi_0 = \grad_D w = W \in \ker(\dive_D)\cap \mathcal{H}(D)^\bot.\qedhere
  \end{align*}
\end{proof}
The next statement is our variant of \cite[Proposition 4.16]{PPTW21}.
\begin{theorem}\label{thm:Tinv} There exists $c_0>0$ such that
\[
    \|U\|_{L_2(\Omega)^d} \leq c_0 \|\1_D U\|_{L_2(\Omega)^d}
\]
for all $U\in H_2$.
\end{theorem}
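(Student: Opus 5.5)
The plan is to obtain the estimate from the bounded inverse theorem applied to the operator $T\colon H_2\to\tilde H_3$, $U\mapsto \1_D U$. By \Cref{lem:T11} and \Cref{lem:Tonto}, $T$ is a bijection. For the inverse to be bounded, both $H_2$ and $\tilde H_3$ must be Banach spaces, i.e.\ closed subspaces of $L_2(\Omega)^d$ and $L_2(D)^d$, respectively.

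For $\tilde H_3=H_3\cap \1_D[\ran(\grad_0)]$ this has already been noted. $H_3$ is closed by \Cref{prop:H3closed}, and $\1_D[\ran(\grad_0)]$ is closed by \Cref{hyp:ep}; an intersection of closed subspaces is closed.

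For $H_2$, I will verify closedness directly from its definition. $\ran(\grad_0)$ is closed by Poincar\'e's inequality, since $\Omega$ lies in a slab. The kernels $\ker(\dive_D)$ and $\ker(\dive_{D^{\co}})$ are closed, because these operators are adjoints and hence closed. The spaces $\mathcal{H}(D)^{\bot}$ and $\mathcal{H}(D^{\co})^{\bot}$ are orthogonal complements and therefore closed. The orthogonal sum of a closed subspace of $L_2(D)^d$ with a closed subspace of $L_2(D^{\co})^d$ is closed in $L_2(D)^d\oplus L_2(D^{\co})^d=L_2(\Omega)^d$. Here I use that $\partial D$ has measure zero, so this decomposition is valid. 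Hence $H_2$, being the intersection of two closed subspaces, is closed.

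Boundedness of $T$ is immediate, since $\|\1_D U\|\le \|U\|$. The bounded inverse theorem then gives a bounded inverse $T^{-1}\colon\tilde H_3\to H_2$. Setting $c_0\coloneqq\|T^{-1}\|$, I obtain for every $U\in H_2$
\[
\|U\|_{L_2(\Omega)^d}=\|T^{-1}TU\|\le c_0\|\1_D U\|_{L_2(D)^d}=c_0\|\1_D U\|_{L_2(\Omega)^d}.
\]

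The main obstacle is not this functional-analytic step but the closedness of the target space $\tilde H_3$. That point is exactly where \Cref{hyp:ep}, the closed-range condition on $\1_D[\ran(\grad_0)]$, enters. Without it, $T$ would still be bijective onto a possibly non-closed space and no uniform constant could be expected, in the spirit of the abstract counterexample in \Cref{thm:counterabstr}. I also need to make sure that $T$ indeed maps into $\tilde H_3$ (\Cref{prop:H3closed}) and that surjectivity in \Cref{lem:Tonto} is onto all of $\tilde H_3$; with these taken from the earlier results, the argument closes.
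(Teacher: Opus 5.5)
Your proposal is correct and follows the paper's proof: the paper also applies the closed graph (bounded inverse) theorem to the continuous bijection $T\colon H_2\to\tilde H_3$ given by \Cref{lem:T11} and \Cref{lem:Tonto}, using that both spaces are Hilbert spaces. You additionally check that $H_2$ and $\tilde H_3$ are closed subspaces, which the paper takes for granted.
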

\begin{proof}
The proof follows from the closed graph theorem, realising that both $H_2$ and $\tilde{H}_3$ are Hilbert spaces and that $T$ is a continuous one-to-one, onto mapping. 
\end{proof}
Finally, we present a version of \cite[Lemma 4.20]{PPTW21} here. For this note that the $\curl$-term is absent due to the higher-dimensional set-up treated here and the fact that the inequality is conditioned to $\ran(\grad_0)$, which in three dimensions results in a vanishing $\curl$-term. In fact, the whole point of the simplification possible due to the abstract \Cref{thm:clr0}, is that we are in a position to actually let go of the $\curl$-term from the outset.
\begin{theorem}\label{thm:decinequ} Assume \Cref{hyp:ep}. Let $H_0\coloneqq \big(\ker(\dive_{D^{\co}})\cap  \mathcal{H}(D^{\co})^\bot\oplus L_2(D)^d\big) \cap \ran(\grad_0)$. Then there exist $c>0$ such that for all $U\in H_0$ we have
\[
    c\|U\|_{L_2(\Omega)^d} \leq \|1_{D} U\|.
\]
\end{theorem}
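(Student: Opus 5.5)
Since the condition on $D$ in the definition of $H_0$ is void (only $L_2(D)^d$ is imposed there), the natural plan is to split an arbitrary $U\in H_0$ into a part lying in $H_2$ and a part on $D$ that is already harmonic-free, and then apply \Cref{thm:Tinv} to the $H_2$-part. Concretely, for $U\in H_0$ write $\1_D U=\grad_D w$ for some $w\in H^1(D)$ (possible since $U\in\ran(\grad_0)$, so $U=\grad_0 v$ and $w=v|_D$). In $L_2(D)^d$ decompose $\1_D U = \grad_D w_0 + h + W$, where $\grad_D w_0\in\grad[H_0^1(D)]$, $h\in\mathcal{H}(D)$, and $W\in\ker(\dive_D)\cap\mathcal{H}(D)^{\bot}$. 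This is possible by \Cref{prop:odecomp} combined with the orthogonal decomposition $L_2(D)^d=\grad[H_0^1(D)]\oplus\ker(\dive_D)$.

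The first step is to observe that each piece lies in $\ran(\grad_0)$. The field $\grad_D w_0$, extended by zero, is $\grad_0$ of the zero extension of $w_0$, and $h\in\mathcal{H}(D)\subseteq\ran(\grad_0)$ by definition. Consequently $U':=U-\grad_D w_0-h\in\ran(\grad_0)$. On $D^{\co}$ the field $U'$ coincides with $U$, so it lies in $\ker(\dive_{D^{\co}})\cap\mathcal{H}(D^{\co})^\bot$, while on $D$ it equals $W$. Hence $U'\in H_2$, and \Cref{thm:Tinv} gives $\|U'\|\le c_0\|W\|$. Since the three pieces are mutually orthogonal in $L_2(D)^d$, we have $\|W\|\le\|\1_D U\|$.

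Finally, estimate
\[
\|U\|\le \|U'\|+\|\grad_D w_0\|+\|h\|\le c_0\|W\|+\|\grad_D w_0\|+\|h\|\le (c_0+2)\|\1_D U\|,
\]
using orthogonality once more, so one may take $c=(c_0+2)^{-1}$.

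The main thing to verify carefully is that $\grad_D w_0$ and $h$ genuinely belong to $\ran(\grad_0)$ after zero extension; for $h$ this follows from the very definition of $\mathcal{H}(D)$ as a subspace of $\ran(\grad_0)$. One also has to check that subtracting them does not alter $U$ on $D^{\co}$, so that the membership conditions for $H_2$ on $D^{\co}$ are inherited from $U$. Everything else reduces to the bijectivity of $T$ established in \Cref{lem:T11} and \Cref{lem:Tonto}, which is where \Cref{hyp:ep} enters.
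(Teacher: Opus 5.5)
Your proposal is correct and follows essentially the same route as the paper. The paper also splits $U\in H_0$ as $U_1+U_2$ with $U_1\in\grad[H_0^1(D)]\oplus\mathcal{H}(D)\subseteq\ran(\grad_0)$ and $U_2\in H_2$, then applies \Cref{thm:Tinv} to $U_2$. The only difference is that you close with the triangle inequality, getting $c=(c_0+2)^{-1}$, whereas the paper uses the orthogonality of the two pieces (Pythagoras) to get a slightly better constant.
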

\begin{proof} Consider $\tilde{H}_0 \coloneqq \ker(\dive_{D^{\co}})\cap  \mathcal{H}(D^{\co})^\bot\oplus L_2(D)^d $. Then $\tilde{H}_0^\bot = \ran(\grad_{D^{\co},0})\oplus  \mathcal{H}(D^{\co})$.  Define $H_1\coloneqq \ran(\grad_{D,0})\oplus \mathcal{H}(D)\subseteq L_2(D)^d$. By assumption, 
\[
\ran(\grad_{D,0})\oplus \mathcal{H}(D) \subseteq \ran(\grad_0).
\] Moreover, recall
\[
 H_2 = \ran(\grad_0)\cap \big(\ker(\dive_{D})\cap \mathcal{H}(D)^{\bot_{L_2(D)^d}}\oplus \ker(\dive_{D^{\co}})\cap \mathcal{H}(D)^{\bot_{L_2(D^{\co})^d}}\big);
\]
We get
\begin{align*}
   \tilde{H}_0 & = \ker(\dive_{D^{\co}})\cap  \mathcal{H}(D^{\co})^\bot\oplus L_2(D)^d \\
   & = \ker(\dive_{D^{\co}})\cap  \mathcal{H}(D^{\co})^\bot\oplus \big(\ran(\grad_{D,0})\oplus \mathcal{H}(D) \oplus \ker(\dive_{D})\cap \mathcal{H}(D)^\bot\big) \\
   & = \big(\ker(\dive_{D})\cap \mathcal{H}(D)^{\bot_{L_2(D)^d}}\oplus \ker(\dive_{D^{\co}})\cap \mathcal{H}(D^{\co})^{\bot_{L_2(D^{\co})^d}}\big) \oplus  \ran(\grad_{D,0})\oplus \mathcal{H}(D).
\end{align*}
In particular, since $H_1\subseteq \ran(\grad_0)$, we infer
\[
   H_0 = H_1 \oplus H_2.
\]
Further note that $U = U_1 +U_2\in H_0 = H_1 \oplus H_2$ implies $U_1= \1_D U_1$.

Hence, by the above theorem,
\[
    \|U\|_{L_2(\Omega)^d}^2 = \|U_1\|^2+\|U_2\|^2 \leq \|U_1\|_{L_2(D)^d}^2+c\|U_2\|_{L_2(D)^d}^2 \leq \tilde{c} \|\1_{D}U\|^2_{L_2(\Omega)^d}.\qedhere
\]
\end{proof}
\begin{proof}[Proof of  \Cref{thm:pcrr}] By assumption on $\Omega$ in \Cref{sec:apptoMax}, the complement of $\Omega$ is connected. Thus, by \cite{PW22} or \cite{Pi82}, we obtain $\ker(\curl_0)=\ran(\grad_0)$. By  
\Cref{thm:decinequ} in conjunction with \Cref{thm:concrete}, we obtain the assertion.
\end{proof}

\section{Conclusion}

In this paper we have revisited statements for strong and semi-unform stability for hyperbolic type equations. Our main motivation and application was drawn from Maxwell's equations with partial damping provided by the conductivity term. Compared to other results, we managed to reduce the amount of geometric assumptions to a rather minimal set of requirements. Moreover, occasional assumptions on self-adjointness or even realness of the conductivity could be removed. It appears to be an \textbf{open problem} to characterise the functional analytic condition in \Cref{hyp:ep} in terms of geometric properties of the underlying domain $\Omega$ and the domain of the damping $D$.

\section*{Data Availability \& Conflict of Interest}

Data Availability: Not applicable. No datasets were generated or analyzed in this study.

Conflict of Interest: The author declares that he has no competing interest.

\end{document}